\documentclass[
	framed_theorems,
	framed_proofs,
	abstract=true,
	print,
	english,
]{OCPreprint}

\addbibresource{world.bib}

\usepackage{enumitem}
\setlist[enumerate]{label=(\alph*)}

\usepackage[boxed]{algorithm2e}
\setlength{\algomargin}{1em}

\SetAlCapSkip{.5em}

\crefname{algocf}{Algorithm}{Algorithms}


\renewcommand\phi\varphi
\newcommand\eps\varepsilon
\newcommand\Laplace\Delta

\newcommand\aeon{\;\text{a.e.\ on}\;}
\newcommand{\meas}[2][]{m\paren[#1]{#2}}
\newcommand{\Wad}{W_{\mathrm{ad}}}
\newcommand\dualh[3][]{\dual[#1]{#2}{#3}_{H^{-1}(\Omega)\times H_0^1(\Omega)}}
\newcommand\Cmin{C_{\min}}
\newcommand\lptimeslq{L^p(\Omega)\times L^q(\Omega)}

\definecolor{todocolor}{rgb}{1.0,0.3,0.3}

\newenvironment{minproblem}[2][]{%
	\begin{aligned}
		\min_{#1} \quad& #2
		\\
		\text{s.t.} \quad&
		\begin{aligned}[t]
		}{%
		\end{aligned}
	\end{aligned}
}


\let\cite\parencite

\ifframedtheorems
	
\else
	
\fi


\ExecuteBibliographyOptions{uniquename=false,giveninits=true,}

\allowdisplaybreaks

\author{Felix Harder\and Gerd Wachsmuth}
\title[M-stationarity for MPCCs in Lebesgue spaces]{%
M-stationarity for a class of MPCCs in Lebesgue spaces}

\date{October 21, 2021}

\begin{document}
\maketitle
\begin{abstract}
	We show that an optimality condition of M-stationarity type
	holds for minimizers of a class of
	mathematical programs with complementarity constraints (MPCCs) in Lebesgue spaces.
	We apply these results also to local
	minimizers of an inverse optimal control problem
	(which is an instance of an infinite-dimensional bilevel optimization problem).
	The multipliers for the M-stationarity system
	can be constructed
	via convex combinations of various multipliers to auxiliary, linear problems.
	However, proving the existence of the multipliers to these
	auxiliary problems is difficult and only possible in some situations.
\end{abstract}

\begin{keywords}
	Mathematical program with complementarity constraints, 
	Necessary optimality conditions,
	M-stationarity,
	inverse optimal control
\end{keywords}

\begin{msc}
	\mscLink{90C33}, 
	\mscLink{49K20}, 
	\mscLink{49K21}  
\end{msc}

\section{Introduction}
\label{sec:intro}

In this article, we consider so-called 
mathematical programs with complementarity constraints
(MPCCs for short) in Lebesgue spaces,
see \eqref{eq:abstract_mpcc} for such an optimization problem.
For this type of problems, it is characteristic that they contain
a complementarity constraint, i.e.\ a constraint of the form
\begin{equation*}
	0 \le G(x)
	\quad\land\quad
	G(x) H(x) = 0
	\quad\land\quad
	H(x) \ge 0
	\qquad\aeon \Omega,
\end{equation*}
where $G:X\to L^p(\Omega)$, $H:X\to L^q(\Omega)$ are functions, 
$X$ is a Banach space and
$L^p(\Omega)$, $L^q(\Omega)$ are Lebesgue spaces over some domain $\Omega$.
Other (non-complementarity) constraints can involve Banach spaces
which are not necessarily Lebesgue spaces.

We are interested in first-order necessary stationarity conditions for such a class of 
optimization problems.

In finite-dimensional spaces, MPCCs have been studied frequently,
both from a numerical and theoretical perspective.
Even in finite-dimensional spaces, 
MPCCs form a challenging class of optimization problems.
Several problem-tailored stationarity conditions have been developed.
One such stationarity condition is the so-called strong stationarity
(which is equivalent to the KKT conditions).
However, strong stationarity does not need to hold for local minimizers even if
the data is linear.
A stationarity condition which holds under the very weak constraint qualification
MPCC-GCQ
(which, e.g., holds if the data is linear) is M-stationarity,
see \cite[Theorem~14]{FlegelKanzowOutrata2006}.
Therein, this result was proved by using the so-called
limiting normal cone and its calculus rules.

Recently, a simpler proof of M-stationarity for finite-dimensional MPCCs
has been found, see \cite{Harder2020}.
In particular, the concept of the limiting normal cone was not required.
Instead, the multipliers to the system of M-stationarity 
were constructed as a convex combination of several multipliers
which satisfy a system of A$_\beta$-stationarity
(where we use A$_\beta$-stationarity as defined in \cref{def:stat}).
These A$_\beta$-stationarity systems have already been observed
in \cite[Theorem~3.4]{FlegelKanzow2005:3}.

A first idea to prove M-stationarity for MPCCs in Lebesgue spaces
is to utilize the calculus for the limiting normal cone.
This fails for two reasons.
First, it was shown in 
\cite{MehlitzWachsmuth2016:1,MehlitzWachsmuth2017:1}
that the limiting normal cone to a complementarity set in $L^2(\mu)$
is always convex and corresponds to weak stationarity.
Second,
the calculus rules for the limiting normal cone typically utilize
the notion of \emph{sequential normal compactness},
but this fails for many subsets of Lebesgue spaces,
see \cite{Mehlitz2018}.

The situation is slightly different for complementarity sets
in Sobolev spaces.
In the one-dimensional situation,
it can be shown that the limiting normal cone is small enough
and, thus,
it is not too difficult to
prove
necessary optimality conditions of M-stationary type,
see
\cite{JarusekOutrata2008,Wachsmuth2014:2}.
In dimensions $n \ge 2$,
these arguments cannot be transferred,
see
\cite{HintermuellerMordukhovichSurowiec2011,Wachsmuth2014:2}.
The main reason was provided later in
the paper
\cite{HarderWachsmuth2017:1},
which shows that again the limiting normal cone
of a complementarity set in Sobolev spaces is too big
if the dimension is bigger than $1$.

In the present paper, we are going to transfer the approach of \cite{Harder2020}.
The idea is to take convex combinations of so-called A$_\beta$-stationary multipliers,
where $\beta$ ranges over the measurable subsets of the biactive set,
see \cref{def:stat}.
Under some regularity assumptions on these multipliers,
we were indeed able to prove the existence of an M-stationary multiplier.
A very crucial difference to the finite-dimensional situation
is that the existence of the A$_\beta$-stationary multipliers
does not come for free.
To explain this further,
the associated system of A$_\beta$-stationarity is precisely the KKT system
of a tightened and linearized problem 
(see \eqref{eq:lpbeta} for such a problem).
In finite dimensions, the existence of multipliers
follows directly from the linearity of the problem,
but this is not the case in infinite dimensions.
Quite surprisingly,
there exist situations in which solutions of this problem fail to possess
multipliers, see \cref{ex:no_fcq}.

Therefore,
the main contributions of this paper are the following.
\begin{enumerate}[label=(\roman*)]
	\item
		Combining A$_\beta$-stationary multipliers into M-stationary multipliers
		is possible
		(\cref{thm:schinabeck}).
	\item
		We prove the existence of A$_\beta$-stationary multipliers for a linear MPCC
		(\cref{prop:kkt_lp_beta}).
	\item
		By means of an example,
		we show that nonlinear MPCCs can be linearized
		(\cref{lem:linearized_OC})
		in order to apply
		the previous results
		(\cref{thm:mstat}).
\end{enumerate}
Thus, we give the first result which provides a system of M-stationarity
for an MPCC in Lebesgue spaces.
Previously,
it was unknown 
whether M-stationarity holds
for instances of MPCCs and bilevel problems in Lebesgue spaces,
see the open questions in
\cite[p.~642]{HarderWachsmuth2018:1}, \cite[p.~224]{Harder2021}.

In \cref{sec:preliminaries}, we formally introduce MPCCs in Lebesgue spaces
and define the corresponding stationarity conditions.
\Cref{sec:a_to_m} deals with the combination of multipliers
to the A$_\beta$-stationarity system into multipliers of the M-stationarity system.
In \cref{sec:linear_mpcc} we apply this to a class of linear MPCCs.
It turns out that showing A$_\beta$-stationarity of local minimizers
is difficult and is only possible under some further assumptions on the structure
of the MPCC.
In \cref{sec:IOC} we study
an inverse optimal control problem, which is nonlinear and more natural looking
than a linear MPCC.
The KKT reformulation of the inverse optimal control problem
is an MPCC problem in Lebesgue spaces and thus we can apply the previous results.
However, the linearization of the inverse optimal control problem
requires some effort, including regularization methods.
Finally, in \cref{sec:conclusion} we give an outlook and conclude.

\section{Stationarity concepts for MPCCs in Lebesgue spaces}
\label{sec:preliminaries}

We first consider an abstract class
of MPCCs in Lebesgue spaces.
This class is broad enough such that
all MPCCs from this article fit into the setting.

Let $(\Omega_i,\AA_i,m_i)$ be a measure space
for all $i\in\set{1,2}$ and let $X,Y$ be Banach spaces.
We consider the class of MPCCs
given by
\begin{equation*}
	\label{eq:abstract_mpcc}
	\tag{MPCC}
	\begin{minproblem}[x \in X]{F(x)}
		h(x) &= 0,
		\\
		g(x) &\geq0 \quad\aeon\Omega_1,
		\\
		0 &\leq G(x) \perp H(x)\geq0
		\quad\aeon\Omega_2,
	\end{minproblem}
\end{equation*}
where $F \colon X\to\R$, $h \colon X\to Y$,
$g \colon X\to L^2(\Omega_1)$, $G,H \colon X\to L^2(\Omega_2)$
are continuously Fréchet differentiable functions.
The last constraint in \eqref{eq:abstract_mpcc}
is an abbreviation of
the pointwise complementarity
\begin{equation*}
	0 \le G(x)
	\quad\land\quad
	G(x) H(x) = 0
	\quad\land\quad
	H(x) \ge 0
	\qquad\aeon \Omega_2.
\end{equation*}

For a measurable function $v \colon \Omega_i\to\R$, we will use the notation
$\set{v>0}$ to refer to the set
$\set{\omega\in\Omega_i\given v(\omega)>0}$,
and similar notations for other pointwise relations
will be used throughout this paper.

Let us define stationarity conditions for this MPCC.
\begin{definition}
	\label{def:stat}
	Let $\bar x$ be a feasible point of 
	\eqref{eq:abstract_mpcc}.
	We call $\bar x$
	a \emph{weakly stationary point}
	if there exist multipliers
	$\bar\eta\in Y\dualspace$, $\bar\rho\in L^2(\Omega_1)$,
	$\bar\mu,\bar\nu\in L^2(\Omega_2)$ such that
	\begin{align*}
		F'(\bar x)
		+ h'(\bar x)\adjoint\bar\eta
		+ g'(\bar x)\adjoint\bar\rho
		+ G'(\bar x)\adjoint\bar\mu
		+ H'(\bar x)\adjoint\bar\nu
		&= 0,
		\\
		\bar\rho&\leq0\quad\aeon\Omega_1,
		\\
		\bar\rho &=0 \quad\aeon\set{g(\bar x)>0},
		\\
		\bar\mu &= 0 \quad\aeon\set{G(\bar x)>0},
		\\
		\bar\nu &= 0 \quad\aeon\set{H(\bar x)>0}.
	\end{align*}
	If we have the additional condition
	\begin{equation*}
		(\bar\mu<0\land\bar\nu<0)\lor\bar\mu\bar\nu=0
		\qquad \aeon \set{G(\bar x)=H(\bar x)=0}
	\end{equation*}
	then $\bar x$ is an \emph{M-stationary point}.
	We say that $\bar x$ is a strongly stationary
	(or S-stationary)
	point if $\bar x$ is weakly stationary and
	\begin{equation*}
		\bar\mu\leq0\land\bar\nu\leq0
		\qquad \aeon\set{G(\bar x)=H(\bar x)=0}
	\end{equation*}
	holds.
	We say that $\bar x$ is \emph{A$_\beta$-stationary}, where 
	$\beta \subset \set{G(\bar x)=H(\bar x)=0}$
	is measurable,
	if
	there exist weakly stationary multipliers which additionally satisfy
	\begin{equation*}
		\bar\mu \le 0 \quad\aeon \set{G(\bar x)=H(\bar x)=0} \setminus \beta
		\qquad\text{and}\qquad
		\bar\nu \le 0 \quad\aeon \beta
		.
	\end{equation*}
	If there exists a measurable $\beta \subset \set{G(\bar x)=H(\bar x)=0}$
	such that $\bar x$ is A$_\beta$-stationary,
	then $\bar x$ is called \emph{A-stationary}.
	If $\bar x$ is A$_\beta$-stationary
	for all measurable $\beta \subset \set{G(\bar x)=H(\bar x)=0}$,
	we say that $\bar x$ is \emph{A$_\forall$-stationary}.
\end{definition}

In the case that $\Omega_i$ is finite (and equipped with the counting measure $m_i$)
and $X,Y$ are finite dimensional,
\cref{def:stat} coincides with the
definitions for finite-dimensional MPCCs in the literature.
An exception is A$_\forall$-stationarity, which was not yet defined
for MPCCs, but is equivalent to linearized B-stationarity,
see \cite[Proposition~3.1~(b)]{Harder2021:1}.
We further note that (as far as we know) A$_\beta$-stationarity was only
recently defined in finite dimensions in \cite[Definition~2.5]{Harder2021:1}.

We also mention that in an infinite-dimensional setting,
the name ``M-stationarity'' can sometimes refer to a condition
involving the limiting normal cone (see, e.g., \cite[Definition~3.2]{Mehlitz2017}),
which would be different from our pointwise a.e.\ definition.

\section{From A$_\forall$-stationarity to M-stationarity}
\label{sec:a_to_m}

In \cite[Lemma~3.2]{Harder2020}, it was shown for finite dimensional MPCCs that
one can construct multipliers for the system of M-stationarity
as a convex combination of the collection of 
multipliers which satisfy a system of A$_\beta$-stationarity for some $\beta$.
In this \lcnamecref{sec:a_to_m}, we want to make a similar statement
in an infinite-dimensional setting.
As we do not have finitely many multipliers anymore,
this requires some additional assumptions, and also the proof is much more complicated.
We formulate the main \lcnamecref{thm:schinabeck} of this
\lcnamecref{sec:a_to_m}.
Its proof is given after some auxiliary lemmas.
\begin{theorem}
	\label{thm:schinabeck}
	Let $(\Omega,\AA,m)$ be a measure space
	and $1<p,q<\infty$.
	Suppose that for all $\alpha\in\AA$
	there exist
	$\mu^\alpha\in L^p(\Omega),\nu^\alpha\in L^q(\Omega)$
	with
	\begin{align}
		\label{eq:mu_nu_leq1}
		\mu^\alpha &\leq0\quad\aeon\Omega\setminus\alpha,
		&
		\nu^\alpha &\leq0\quad\aeon\alpha.
	\end{align}
	Furthermore, we assume the existence of a measurable function 
	$c_0:\Omega\to\R$
	and a constant $c_1>0$ with
	\begin{subequations}
		\label{eq:bounds_on_mu_nu}
		\begin{align}
			\max(\mu^\alpha,\nu^\alpha) &\leq c_0
			\qquad\aeon\Omega,
			\quad\forall \alpha\in\AA,
			\label{eq:bounds_on_mu_nu_1}
			\\
			\norm{\mu^\alpha}_{L^p(\Omega)}+\norm{\nu^\alpha}_{L^q(\Omega)}
			&\leq c_1
			\qquad\forall \alpha\in\AA.
			\label{eq:bounds_on_mu_nu_2}
		\end{align}
	\end{subequations}
	Then, there exists a point 
	\begin{equation*}
		(\bar\mu,\bar\nu) \in
		B:= \clconv\set{ (\mu^\alpha,\nu^\alpha)
		\given \alpha\in\AA }\subset L^p(\Omega)\times L^q(\Omega)
	\end{equation*}
	which satisfies
	\begin{equation}
		\label{eq:mstat}
		(\bar\mu<0\land\bar\nu<0)\lor\bar\mu\bar\nu=0
		\qquad\aeon\Omega.
	\end{equation}
\end{theorem}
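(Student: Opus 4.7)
The plan is to argue via weak compactness of $B$ combined with an iterative averaging scheme that drives a suitable ``violation'' functional to zero.

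First I would establish that $B$ is weakly compact in $L^p(\Omega)\times L^q(\Omega)$: reflexivity (because $1<p,q<\infty$) together with the uniform norm bound \eqref{eq:bounds_on_mu_nu_2} make $\set{(\mu^\alpha,\nu^\alpha):\alpha\in\AA}$ relatively weakly compact, so its closed convex hull $B$ is weakly compact. Via a Mazur-type argument the pointwise upper bound \eqref{eq:bounds_on_mu_nu_1} is inherited by every element of $B$, so $\max(\mu,\nu)\leq c_0$ a.e.\ on $\Omega$ for each $(\mu,\nu)\in B$. I would then introduce the violation functional
\[
	J(\mu,\nu) := \int_\Omega \bigl(\max(\mu,0)\,|\nu| + \max(\nu,0)\,|\mu|\bigr)\,dm,
\]
which is finite on $B$ by the previous bounds. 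A pointwise case distinction shows that the integrand vanishes at $(x,y)\in\R^2$ exactly when $(x,y)$ lies in the M-stationarity set $\set{xy=0}\cup\set{x<0\land y<0}$, so the theorem is equivalent to exhibiting an element of $B$ with $J=0$.

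The crux of the proof, which I expect to rest on the auxiliary lemmas preceding the statement, is a strict-decrease step: given $(\mu,\nu)\in B$ with $J(\mu,\nu)>0$, pick $\alpha\in\AA$ adapted to the sign pattern of $(\mu,\nu)$---the natural candidate is $\alpha=\set{\nu>0}$, or symmetrically $\alpha=\set{\mu>0}$---and a weight $\lambda\in(0,1]$ for which the convex combination $\lambda(\mu^\alpha,\nu^\alpha)+(1-\lambda)(\mu,\nu)\in B$ satisfies $J<J(\mu,\nu)$. The sign conditions \eqref{eq:mu_nu_leq1} are tailor-made for this: $\nu^\alpha\leq 0$ on $\alpha=\set{\nu>0}$ and $\mu^\alpha\leq 0$ on $\Omega\setminus\alpha=\set{\nu\leq 0}$ allow one to simultaneously shrink the two bad regions $\set{\mu>0,\nu\neq 0}$ and $\set{\nu>0,\mu\neq 0}$. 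Iterating this decrease produces a minimising sequence in $B$; extracting a weakly convergent subsequence and applying a Komlós/Banach--Saks reduction to pointwise-a.e.\ convergence of convex combinations, dominated by $c_0$ via the dominated convergence theorem, should yield a limit $(\bar\mu,\bar\nu)\in B$ at which $J$ vanishes, i.e.\ a pair satisfying \eqref{eq:mstat}.

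The main obstacle is the non-convexity of the pointwise condition \eqref{eq:mstat}: the target set $\set{xy=0}\cup\set{x<0\land y<0}$ is not convex, so $J$ is neither convex nor in general weakly lower semicontinuous, and the theorem cannot be obtained by a direct convex-analytic minimisation. Making the strict-decrease step quantitative---so that the iteration actually forces $J\to 0$ and the limit inherits the M-stationarity almost everywhere---and exploiting the measurable dependence of $\alpha\mapsto(\mu^\alpha,\nu^\alpha)$ to produce $\alpha$ from the sign pattern of the current iterate is precisely the technical work I expect the auxiliary lemmas to handle.
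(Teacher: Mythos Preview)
Your approach differs fundamentally from the paper's, and the obstacle you flag in your final paragraph is not overcome by the tools you propose. Suppose your iteration produces $(\mu_n,\nu_n)\in B$ with $J(\mu_n,\nu_n)\to 0$. Weak convergence alone is useless since $J$ is not weakly lower semicontinuous, and Komlós or Banach--Saks only give a.e.\ or strong convergence of \emph{convex combinations} of the $(\mu_n,\nu_n)$; because $J$ is not convex, these combinations need not have small $J$. Concretely, on $\Omega=[0,1]$ the constant pairs $(1,-1/k)$ and $(-1/k,1)$ each have $J=1/k\to 0$, yet their Cesàro averages tend to $(1/2,1/2)$, for which $J=1/2$. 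Nothing in the hypotheses excludes this phenomenon inside $B$, since the pointwise property $\min(\mu,\nu)\le 0$ enjoyed by each $(\mu^\alpha,\nu^\alpha)$ is not stable under convex hull. The strict-decrease step is likewise not secured: with $\alpha=\{\nu>0\}$ the component $\mu^\alpha$ is unconstrained on $\alpha$ and can create new violations on $\{\nu>0,\mu\le 0\}$ faster than $\nu^\alpha\le 0$ removes the old ones, and you give no mechanism to prevent the infimum of $J$ over the iterates from being positive.

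The paper's route avoids minimising sequences entirely. One considers the family $\CC$ of all sets of the form $\clconv\{x^\alpha:\alpha\in\AA\}$ with $x^\alpha\in B\cap A^\alpha$, and uses Zorn's Lemma (together with weak compactness of the slices $B\cap A^\alpha$) to extract a minimal element $\Cmin\in\CC$. The key lemma then shows that every \emph{strongly exposed} point of $\Cmin$ satisfies \eqref{eq:mstat}: given the exposing functional $\xi$, one selects $\hat x^\alpha\in\Cmin\cap A^\alpha$ minimising $\xi$; minimality of $\Cmin$ forces $\Cmin=\clconv\{\hat x^\alpha:\alpha\in\AA\}$, so the exposed point $\bar x$ is a strong limit of some $\hat x^{\alpha_k}$, and a symmetric-difference perturbation of $\alpha_k$ (producing a second approximating sequence $\hat x^{\gamma_k}\to\bar x$ with the opposite sign pattern on the bad set) rules out $\bar\mu\bar\nu<0$ on any set of positive measure. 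Existence of strongly exposed points follows from reflexivity. The non-convexity obstacle is thus sidestepped by working with the extreme structure of a minimal subset rather than with a violation functional.
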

Here and in the sequel,
$\clconv D$ denotes the closed convex hull of a set $D$.

If $\Omega$ is finite (and if $\AA$ is the power set of $\Omega$), 
then \eqref{eq:bounds_on_mu_nu} is automatically satisfied, 
and the statement is the same as \cite[Lemma~3.2]{Harder2020}.

Later, we are going to apply this \lcnamecref{thm:schinabeck}
to an instance of \eqref{eq:abstract_mpcc}
with the setting
$\Omega = \set{G(\bar x) = H(\bar x) = 0}$.
Then, required existence of multipliers
satisfying \eqref{eq:mu_nu_leq1}
is basically the A$_\forall$-stationarity of $\bar x$
(with the additional regularity \eqref{eq:bounds_on_mu_nu}),
whereas the constructed multipliers
$(\bar \mu, \bar \nu)$
will yield the M-stationarity of $\bar x$,
see \cref{thm:mstat}.

The proof requires several steps.
First, for all $\alpha\in\AA$ we define the sets
\begin{equation*}
	A^\alpha :=
	\set{(\mu,\nu)\in L^p(\Omega)\times L^q(\Omega)
		\given
		\mu \leq 0\aeon\Omega\setminus\alpha,\,
		\nu\leq0\aeon\alpha
	}
\end{equation*}
which describe \eqref{eq:mu_nu_leq1}.
Clearly, we have $(\mu^\alpha,\nu^\alpha)\in B\cap A^\alpha$
for all $\alpha\in\AA$.
We also define the set of sets
\begin{equation*}
	\CC:=\set[\bigg]{
		\clconv\set{x^\alpha\given \alpha\in\AA }
		\given
		\set{x^\alpha}_{\alpha \in \AA} \subset B
		:
		\forall \alpha\in\AA :
		x^\alpha\in A^\alpha
	}\subset \PP(L^p(\Omega)\times L^q(\Omega)).
\end{equation*}
Clearly we have $B\in\CC$.
Note that all sets in $\CC$ are closed, convex and nonempty subsets of $B$.
As $B$ is bounded due to \eqref{eq:bounds_on_mu_nu_2}, all sets in $\CC$ are bounded.
We are interested in a set in $\CC$
which is minimal with respect to set inclusion.
The next two \lcnamecrefs{lem:compact_chain} prove the existence of such a minimal set.
\begin{lemma}
	\label{lem:compact_chain}
	Let $I$ be a nonempty set and let $Y$ be a topological space.
	Suppose there exists a nonempty compact set $D_i\subset Y$ for each $i\in I$
	and that these sets satisfy
	\begin{equation}
		\label{eq:subsets_total_order}
		D_i\subset D_j\lor D_j\subset D_i
		\qquad\forall i,j\in I.
	\end{equation}
	Then
	$ \bigcap_{i\in I} D_i\neq\emptyset $
	holds.
\end{lemma}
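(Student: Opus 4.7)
The plan is to exploit the chain property to establish the finite intersection property for the family $\{D_i\}_{i \in I}$ and then invoke compactness to pass from finite to arbitrary intersections.

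First I would verify the finite intersection property directly. Given any indices $i_1,\dots,i_n \in I$, iterating the chain condition \eqref{eq:subsets_total_order} on pairs shows that the finite subfamily $\{D_{i_1},\dots,D_{i_n}\}$ is linearly ordered by inclusion, so one of the sets, say $D_{i_k}$, is contained in all the others. Hence $D_{i_1}\cap\dots\cap D_{i_n} = D_{i_k}$, which is nonempty by assumption.

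Next I would fix an arbitrary $i_0 \in I$ and pass to the compact space $D_{i_0}$. Observe that for every $i \in I$, by the chain condition either $D_i \subset D_{i_0}$ or $D_{i_0} \subset D_i$, so
\begin{equation*}
	\bigcap_{i \in I} D_i
	= \bigcap_{i \in I} (D_i \cap D_{i_0})
	= \bigcap_{\substack{i \in I \\ D_i \subset D_{i_0}}} D_i.
\end{equation*}
Each $D_i$ appearing on the right is a compact, hence closed, subset of the Hausdorff space $D_{i_0}$ (compactness of subsets of Banach spaces in the relevant later application guarantees closedness). The previous paragraph shows that this family of closed subsets of the compact space $D_{i_0}$ still has the finite intersection property, so by the usual characterization of compactness the full intersection is nonempty.

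The only delicate point is the passage from compact to closed, which requires that the ambient topology separates points sufficiently; I would state the lemma under the tacit assumption that $Y$ is Hausdorff (which is harmless for the intended application, where $Y$ will be a bounded subset of a product of Lebesgue spaces equipped with a Hausdorff topology such as the weak topology). Everything else is a mechanical combination of the chain structure, which supplies the FIP for free, with the defining compactness property of $D_{i_0}$.
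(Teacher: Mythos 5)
Your proof is correct and is essentially the paper's argument in contrapositive form: the paper assumes the intersection is empty, covers $D_{i_0}$ by the open sets $Y\setminus D_i$, extracts a finite subcover and uses the chain condition to reduce it to a single complement — exactly the finite-intersection-property reasoning you spell out. Your caveat about Hausdorffness is apt, but it applies equally to the paper's own proof, which tacitly treats the compact sets $D_i$ as closed (their complements as open); in the intended application ($L^p(\Omega)\times L^q(\Omega)$ with the weak topology) this is harmless.
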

\begin{proof}
	Pick $i_0\in I$ arbitrary.
	Suppose that $\bigcap_{i\in I} D_i=\emptyset$.
	Then the family $\set{Y\setminus D_i}_{i\in I}$ of open sets 
	covers $Y$, and in particular $D_{i_0}$.
	Thus, we can select a finite subcover
	$\set{Y\setminus D_{i_j}}_{j=1}^n$ which covers $D_{i_0}$.
	Due to \eqref{eq:subsets_total_order}
	we can without loss of generality assume
	that $D_{i_1}\subset D_{i_2}\subset\ldots\subset D_{i_n}$.
	Thus, the single open set $Y\setminus D_{i_1}$ is a cover of $D_{i_0}$.
	This implies $D_{i_1}\cap D_{i_0}=\emptyset$,
	which contradicts \eqref{eq:subsets_total_order}.
\end{proof}
\begin{lemma}
	\label{lem:minimal_set}
	There exists $\Cmin\in\CC$
	which is minimal with respect to set inclusion.
\end{lemma}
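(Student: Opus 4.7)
The plan is to apply Zorn's lemma to $\CC$ ordered by reverse inclusion. Since $B \in \CC$, the collection is nonempty, so it suffices to show that every totally ordered subfamily $\set{D_i}_{i \in I} \subset \CC$ admits some $D \in \CC$ with $D \subset D_i$ for every $i$. The natural candidate will be constructed from the intersection $D^\star := \bigcap_{i \in I} D_i$.

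First I record that, since $1 < p, q < \infty$, the product $L^p(\Omega) \times L^q(\Omega)$ is reflexive. By Mazur's theorem, the norm-closed convex sets $A^\alpha$, $B$, and every $D_i$ are weakly closed; being contained in the bounded set $B$, they are weakly compact. For each $\alpha \in \AA$ and each $i \in I$, the intersection $A^\alpha \cap D_i$ is therefore weakly compact, and it is nonempty because any representation $D_i = \clconv\set{x^\alpha_i \given \alpha\in\AA}$ with $x^\alpha_i \in A^\alpha \cap B$ yields $x^\alpha_i \in A^\alpha \cap D_i$. The family $\set{A^\alpha \cap D_i}_{i \in I}$ inherits the chain property from $\set{D_i}_{i \in I}$, so \cref{lem:compact_chain} applied in the weak topology of $L^p(\Omega) \times L^q(\Omega)$ gives
\[
A^\alpha \cap D^\star
= \bigcap_{i \in I} (A^\alpha \cap D_i)
\neq \emptyset.
\]

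Using the axiom of choice, I pick $y^\alpha \in A^\alpha \cap D^\star$ for every $\alpha \in \AA$ and set $D := \clconv\set{y^\alpha \given \alpha \in \AA}$. By construction $D \in \CC$, and since $D^\star$ is itself closed and convex, $D \subset D^\star \subset D_i$ for every $i \in I$, so $D$ is the required lower bound. Zorn's lemma then produces a minimal $\Cmin \in \CC$. The main obstacle is verifying the nonemptiness of $A^\alpha \cap D^\star$; this is precisely where the reflexivity assumption (which turns bounded closed convex sets into weakly compact ones) and the already-proved \cref{lem:compact_chain} come into play, while everything else amounts to routine checks of closedness, convexity, and the chain condition.
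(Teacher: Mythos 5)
Your proposal is correct and follows essentially the same route as the paper: Zorn's lemma on $\CC$, with the lower bound for a chain obtained by applying \cref{lem:compact_chain} in the weak topology to the weakly compact, nonempty sets $A^\alpha\cap D_i$ and then taking the closed convex hull of the chosen intersection points. The only difference is cosmetic — you route the argument through $D^\star=\bigcap_i D_i$ and spell out the reflexivity/Mazur justification for weak compactness, which the paper states more tersely.
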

\begin{proof}
	We will use Zorn's Lemma on $\CC$
	with the partial ordering ``$\subset$''.
	Clearly, $\CC$ is nonempty.
	Let $\set{C_i}_{i\in I}\subset \CC$ be a nonempty totally ordered chain.
	We want to show that this chain has a lower bound.
	It follows that $\set{A^\alpha\cap C_i}_{i\in I}$
	is a totally ordered chain for each $\alpha\in\AA$.
	Because $A^\alpha\cap C_i\subset B$ is a bounded and closed set, 
	it is weakly compact for all $\alpha\in\AA$, $i\in I$.
	Moreover, these sets are nonempty.
	Thus, for each $\alpha$, we can apply \cref{lem:compact_chain}
	for the weak topology on $L^p(\Omega)\times L^q(\Omega)$
	to the family of sets $\set{A^\alpha\cap C_i}_{i\in I}$.
	This yields  that the intersection is nonempty,
	i.e.\ there exists a point $\hat x^\alpha$
	with $\hat x^\alpha\in A^\alpha\cap C_i$ for all $i\in I$.
	Now we set $C_0:=\clconv\set{\hat x^\alpha\given \alpha\in\AA}$.
	Clearly, $C_0\in\CC$.
	Since the points $\hat x^\alpha$ are contained in $C_i$ for all $i\in I$,
	it follows that $C_0\subset C_i$ for all $i\in I$.
	Thus, the totally ordered chain $\set{C_i}_{i\in I}$
	has a lower bound in $\CC$.

	By Zorn's Lemma, there exists a set $\Cmin\in\CC$
	which is minimal with respect to the relation ``$\subset$''.
\end{proof}
It is clear that $\Cmin$ does not have to be unique.
By construction of $\CC$, the set $\Cmin$ is non-empty, closed and convex
and it is bounded due to
\eqref{eq:bounds_on_mu_nu_2}.
Owing to the Krein--Milman theorem,
we know that $\Cmin$ possesses extreme points.
For our our proof of \cref{thm:schinabeck},
we need the stronger notion of
so-called \emph{strongly exposed points} of $\Cmin$.
We start with the definition.
\begin{definition}
	\label{def:strongly_exposed_point}
	Let $D\subset X$, and $X$ a normed space.
	Then $x\in D$ is a strongly exposed point if there exists
	$x\dualspace\in X\dualspace$ such that
	$x$ maximizes $x\dualspace$ over $D$ and
	\begin{equation*}
		\dual{x\dualspace}{x_k}\to \dual{x\dualspace}{x}
		\implies
		x_k\to x
	\end{equation*}
	holds for all sequences $\seq{x_k}_{k\in\N}\subset D$.
\end{definition}
It can be seen that a strongly exposed point is always an extreme point.
A non-empty, convex, closed and bounded set in reflexive spaces always has a strongly
exposed point, see \cite[Theorem~8.28]{FabianHabalaHajekAn2001}.
In particular, $\Cmin$ possesses strongly exposed points.
Our next goal is to show that such a strongly exposed point of $\Cmin$
satisfies \eqref{eq:mstat},
which would prove \cref{thm:schinabeck}.
We give a preparatory \lcnamecref{lem:minimize_xi}.
\begin{lemma}
	\label{lem:minimize_xi}
	Let $\xi\in \paren[\big]{\lptimeslq}\dualspace$ 
	be a bounded linear functional on $\lptimeslq$.
	Then there exists a collection 
	$\set{\hat x^\alpha}_{\alpha\in\AA}$
	such that
	\begin{align}
		\hat x^\alpha &\in \Cmin\cap A^\alpha
		&&\forall \alpha\in\AA,
		\\
		\label{eq:xi_min}
		\dual\xi{\hat x^\alpha} &\leq \dual\xi{y^\alpha}
		&&\forall y^\alpha\in \Cmin\cap A^\alpha
		\quad\forall \alpha\in \AA,
		\\
		\label{eq:convclosure}
		\clconv\set{\hat x^\beta\given \beta\in\AA}
		&= \Cmin.
	\end{align}
\end{lemma}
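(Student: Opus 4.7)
The plan is simple: for each $\alpha\in\AA$, let $\hat x^\alpha$ be any minimizer of $\xi$ on $\Cmin\cap A^\alpha$; the first two claims of the lemma are then immediate, while the density assertion \eqref{eq:convclosure} is forced by the minimality of $\Cmin$ in $\CC$.

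First I would show that such a minimizer exists. The set $A^\alpha$ is convex and norm-closed (the two pointwise sign conditions survive passage to an a.e.\ convergent subsequence), hence weakly closed by Mazur's theorem. The set $\Cmin$ is closed, convex, and contained in $B$, which is bounded by \eqref{eq:bounds_on_mu_nu_2}; reflexivity of $\lptimeslq$ (using $1<p,q<\infty$) therefore makes $\Cmin$ weakly compact, and $\Cmin\cap A^\alpha$ weakly compact as well. This intersection is nonempty: writing $\Cmin=\clconv\set{x^\alpha\given\alpha\in\AA}$ with $x^\alpha\in B\cap A^\alpha$ as guaranteed by $\Cmin\in\CC$ shows that $x^\alpha\in\Cmin\cap A^\alpha$. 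Since $\xi$ is weakly continuous, it attains its minimum on this weakly compact set; I pick any such minimizer and call it $\hat x^\alpha$. The inclusion $\hat x^\alpha\in\Cmin\cap A^\alpha$ and the inequality \eqref{eq:xi_min} then hold by construction.

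For \eqref{eq:convclosure}, set $C_0 := \clconv\set{\hat x^\beta\given\beta\in\AA}$. Each $\hat x^\beta$ lies in $A^\beta$ by choice, and in $B$ because $\hat x^\beta\in\Cmin\subset B$; hence $C_0\in\CC$. On the other hand, every $\hat x^\beta$ already belongs to the closed convex set $\Cmin$, so $C_0\subset\Cmin$. Minimality of $\Cmin$ in $\CC$ then forces $C_0=\Cmin$, which is exactly the desired equality.

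The main, and rather mild, obstacle is essentially bookkeeping: one has to confirm that $C_0$ is a legal competitor for $\Cmin$ in $\CC$, which requires $\hat x^\beta\in B$ rather than merely $\hat x^\beta\in\Cmin$. This is saved automatically by the inclusion $\Cmin\subset B$ that is baked into the definition of $\CC$. The only slightly technical ingredient is the weak compactness of $\Cmin\cap A^\alpha$, which is routine once $\lptimeslq$ is recognised as reflexive.
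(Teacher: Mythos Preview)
Your proof is correct and follows exactly the same approach as the paper: choose $\hat x^\alpha$ as a minimizer of $\xi$ over the weakly compact set $\Cmin\cap A^\alpha$, and then deduce \eqref{eq:convclosure} from the minimality of $\Cmin$ in $\CC$. You have simply filled in more of the routine details (weak closedness of $A^\alpha$, nonemptiness of the intersection, the inclusion $\Cmin\subset B$) than the paper does.
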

\begin{proof}
	We choose $\hat x^\alpha$ as a minimizer of $\xi$
	over $\Cmin\cap A^\alpha$.
	Such a minimizer exists because
	$\Cmin\cap A^\alpha$ is weakly compact and $\xi$
	is weakly continuous.

	The condition \eqref{eq:convclosure} follows because
	all points $\hat x^\beta$ are in the closed and convex set
	$\Cmin$ and because $\Cmin$ was chosen as a minimal
	set in $\CC$ with respect to set inclusion.
\end{proof}

\begin{lemma}
	\label{lem:strongly_exposed_is_M}
	Every strongly exposed point $(\bar\mu,\bar\nu)$ of $\Cmin$ satisfies 
	\eqref{eq:mstat}.
\end{lemma}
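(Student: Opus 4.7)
The approach is to exploit the functional $\xi^* = (\xi_1, \xi_2) \in L^{p'}(\Omega) \times L^{q'}(\Omega)$ that strongly exposes $(\bar\mu, \bar\nu)$ (w.l.o.g.\ as a maximizer), together with \cref{lem:minimize_xi}. First I would upgrade the sequential statement in \cref{def:strongly_exposed_point} to a uniform modulus: for each $\epsilon > 0$ there exists $\delta(\epsilon) > 0$ such that any $y \in \Cmin$ with $\|y - (\bar\mu, \bar\nu)\|_{\lptimeslq} \geq \epsilon$ satisfies $\langle \xi^*, y\rangle \leq \langle \xi^*, (\bar\mu, \bar\nu)\rangle - \delta(\epsilon)$; otherwise a diagonal extraction produces a sequence contradicting strong exposure.

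Next I would invoke \cref{lem:minimize_xi} with $\xi := -\xi^*$ to obtain $\{\hat x^\alpha = (\mu^\alpha, \nu^\alpha)\}_{\alpha \in \AA}$, each $\xi^*$-maximizing over $\Cmin \cap A^\alpha$, with $\clconv\{\hat x^\alpha : \alpha \in \AA\} = \Cmin$. Since $(\bar\mu, \bar\nu)$ lies in this closed convex hull I can write it as a norm limit of convex combinations $y_k = \sum_j \lambda_j^k \hat x^{\alpha_j^k}$. Each summand satisfies $\langle \xi^*, \hat x^\alpha\rangle \leq \langle \xi^*, (\bar\mu, \bar\nu)\rangle$ while $\langle \xi^*, y_k\rangle \to \langle \xi^*, (\bar\mu, \bar\nu)\rangle$, so the uniform modulus forces the total $\lambda_j^k$-mass of indices $j$ with $\|\hat x^{\alpha_j^k} - (\bar\mu, \bar\nu)\|_{\lptimeslq} \geq \epsilon$ to vanish as $k \to \infty$. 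In particular I extract a sequence $\hat x^{\alpha_n} \to (\bar\mu, \bar\nu)$ in $\lptimeslq$ and, after a further subsequence, pointwise a.e.\ on $\Omega$.

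The pointwise limit combined with the sign constraints $\mu^{\alpha_n} \leq 0$ a.e.\ on $\Omega \setminus \alpha_n$ and $\nu^{\alpha_n} \leq 0$ a.e.\ on $\alpha_n$ then yields the first half of \eqref{eq:mstat}: for a.e.\ $\omega$ with $\bar\mu(\omega) > 0$ the convergence $\mu^{\alpha_n}(\omega) \to \bar\mu(\omega) > 0$ forces $\omega \in \alpha_n$ for all large $n$, hence $\nu^{\alpha_n}(\omega) \leq 0$ for all large $n$, so $\bar\nu(\omega) \leq 0$; symmetrically $\bar\nu(\omega) > 0$ implies $\bar\mu(\omega) \leq 0$. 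This rules out the \emph{both strictly positive} failure mode, i.e.\ $m(\{\bar\mu > 0\} \cap \{\bar\nu > 0\}) = 0$.

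The main obstacle is eliminating the \emph{mixed sign} failure $m(\{\bar\mu > 0\} \cap \{\bar\nu < 0\}) > 0$ together with its symmetric counterpart, since the sign-chasing above is consistent with both signs of $\bar\nu$ wherever $\bar\mu > 0$. My plan for this step is to use the maximizer property of $\hat x^\alpha$ more sharply: once the \emph{both positive} case is excluded, $(\bar\mu, \bar\nu) \in A^{\alpha^*}$ for $\alpha^* := \{\bar\mu > 0\}$, so $\hat x^{\alpha^*} = (\bar\mu, \bar\nu)$ by uniqueness of the $\xi^*$-maximizer on $\Cmin$. I would then compare $\hat x^{\alpha^*}$ with $\hat x^{\alpha^* \setminus E'}$ for small measurable $E' \subset \{\bar\mu > 0\} \cap \{\bar\nu < 0\}$ and, by forming an appropriate convex combination staying inside $\Cmin$, attempt to produce either a second $\xi^*$-maximizer on $\Cmin$ (contradicting strong exposure) or a strict subset of $\Cmin$ still lying in $\CC$ (contradicting the minimality of $\Cmin$ established in \cref{lem:minimal_set}). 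I expect this perturbation-and-comparison step, which must remain inside the abstractly defined $\Cmin$, to be the technically hardest part.
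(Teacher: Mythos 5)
Your completed steps are fine: the uniform exposure modulus, the extraction of a sequence $\hat x^{\alpha_n}\to(\bar\mu,\bar\nu)$ in norm and pointwise a.e., and the sign-chasing that yields $\min(\bar\mu,\bar\nu)\le 0$ a.e.\ are all correct, and they reproduce (by a slightly different route) the \emph{easy} part of the paper's argument. The genuine gap is exactly where you stop: showing $\bar\mu\bar\nu\ge 0$ a.e., i.e.\ that the mixed-sign set $\{\bar\mu\bar\nu<0\}$ is null, is the core content of \eqref{eq:mstat}, and your proposal only offers a plan for it, a plan which as stated does not go through.

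Two concrete problems with the sketch. First, by invoking \cref{lem:minimize_xi} with $-\xi^*$ you make $\hat x^\alpha$ a \emph{maximizer} of $\xi^*$ over $\Cmin\cap A^\alpha$; then every competitor in that section has a \emph{smaller} $\xi^*$-value, so comparing $\hat x^{\alpha^*}=(\bar\mu,\bar\nu)$ with $\hat x^{\alpha^*\setminus E'}$, or with convex combinations of the two, gives no lower bound on $\dual{\xi^*}{\hat x^{\alpha^*\setminus E'}}$ and hence produces neither a second $\xi^*$-maximizer nor any violation of strong exposure. The paper's proof uses the opposite orientation in an essential way: the $\hat x^\alpha$ are section-wise \emph{minimizers} of the exposing functional, so that the admissible perturbation $y_k=(1-\ell^{-3})\hat x^{\alpha_k}+\ell^{-3}\hat x^{\gamma_k}\in\Cmin\cap A^{\alpha_k}$ (with $\gamma_k=\alpha_k\Delta\beta_k$) forces $\dual{\xi}{\hat x^{\gamma_k}}\ge\dual{\xi}{\hat x^{\alpha_k}}$; both values are then sandwiched near the maximum over $\Cmin$, strong exposure gives $\hat x^{\alpha_k}\to\bar x$ and $\hat x^{\gamma_k}\to\bar x$, and since the two selections differ by at least $1/\ell$ on the truncated mixed-sign set $\beta_k$, one gets $\meas{\beta_k}\to 0$ and, via Fatou, $\meas{\{\bar\mu\bar\nu<0\}}=0$. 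Second, the fallback contradiction you propose, a proper subset of $\Cmin$ belonging to $\CC$, is not substantiated: minimality of $\Cmin$ only says that every admissible selection $\{x^\alpha\}_{\alpha\in\AA}$ with $x^\alpha\in B\cap A^\alpha$ has closed convex hull equal to $\Cmin$, and nothing in your construction shows how $m(E)>0$ would yield a selection with a strictly smaller hull. Finally, note that even the admissibility of your perturbation (keeping a convex combination inside $A^{\alpha^*}$) requires the pointwise bound \eqref{eq:bounds_on_mu_nu_1} together with a truncation of the type $\{\abs{\bar\nu}>1/\ell\}\cap\{c_0\le\ell\}$, as in the paper's definition of $\beta_k$; choosing ``$E'$ small'' in measure alone does not provide the needed uniform sign margin against $c_0$.
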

\begin{proof}
	Let $\bar x=(\bar\mu,\bar\nu)$ be a strongly exposed point of $\Cmin$
	and let $\xi\in \paren[\big]{\lptimeslq}\dualspace$ 
	be the corresponding multiplier.
	We choose $\set{\hat x^{\alpha}}_{\alpha\in\AA}$
	according to \cref{lem:minimize_xi}.

	Let $k\in\N$ be given and suppose that
	$\dual\xi{\bar x}\geq \dual\xi{\hat x^\alpha}+\tfrac1k$ holds
	for all $\alpha\in\AA$.
	Then it would follow from \eqref{eq:convclosure}
	that $\xi$ separates the closed convex set $\Cmin$ from
	$\bar x$ and therefore $\bar x\not\in \Cmin$,
	which is not possible since $\bar x$ is an extremal point
	of $\Cmin$.
	Thus, there exists a set $\alpha_k\in\AA$ such that
	\begin{equation*}
		\dual\xi{\bar x} -\tfrac1k
		\leq \dual\xi{\hat x^{\alpha_k}}
		\leq \dual\xi{\bar x}
	\end{equation*}
	holds.
	Let $\ell\in\N$ with $\ell\geq2$ be fixed.
	We make the definitions
	\begin{align*}
		D_\ell &:= 
		\set{(a_1,a_2)\in\R^2\given
			\min(\abs{x_1},\abs{x_2})>1/\ell \land x_1x_2<0
		},
		\\
		\beta_k &:= \set{\omega\in\Omega\given
			\hat x^{\alpha_k}(\omega)\in D_\ell \land c_0(\omega)\leq \ell
		}
		\in\AA,
		\\
		\gamma_k &:= \alpha_k\Delta\beta_k\in\AA
	\end{align*}
	for all $k\in\N$,
	where $\Delta$ denotes the symmetric difference.
	We claim that
	\begin{equation*}
		y_k := (1-\ell^{-3})\hat x^{\alpha_k}+\ell^{-3} \hat x^{\gamma_k}
		\in A^{\alpha_k}\cap \Cmin
	\end{equation*}
	holds for all $k\in\N$.
	Indeed, for $\omega\in\alpha_k\cap \beta_k$ we have
	$ y_k(\omega)_2
	\leq (1-\ell^{-3})\hat x^{\alpha_k}(\omega)_2+ \ell^{-3}c_0(\omega)
	\leq (1-\ell^{-3})(-\ell^{-1}) + \ell^{-3} \ell \leq 0 $
	a.e.\ on $\alpha_k\cap\beta_k$.
	Similarly, for $\omega\in\beta_k\setminus\alpha_k$ we have
	$ y_k(\omega)_1
	\leq (1-\ell^{-3})\hat x^{\alpha_k}(\omega)_1+ \ell^{-3}c_0(\omega)
	\leq (1-\ell^{-3})(-\ell^{-1}) + \ell^{-3} \ell \leq 0 $
	a.e.\ on $\beta_k\setminus\alpha_k$.
	If $\omega\not\in\beta_k$, then we have
	$\omega\in\alpha_k\Leftrightarrow\omega\in\gamma_k$,
	which implies $y_k(\omega)_1\leq0$ a.e.\ on 
	$\Omega\setminus(\alpha_k\cup\beta_k)$
	and $y_k(\omega)_2\leq0$ a.e.\ on $\alpha_k\setminus\beta_k$
	by convexity and $\hat x^{\gamma_k}\in A^{\gamma_k}$.
	This shows $y_k\in A^{\alpha_k}$,
	and $y_k\in \Cmin$ follows by convexity.

	According to \eqref{eq:xi_min} we then have
	$\dual\xi{\hat x^{\alpha_k}}\leq\dual\xi{ y^k}$, which implies
	\begin{equation*}
		\dual\xi{\bar x} -\tfrac1k
		\leq \dual\xi{\hat x^{\alpha_k}}
		\leq \dual\xi{\hat x^{\gamma_k}}
		\leq \dual\xi{\bar x}.
	\end{equation*}
	Then the fact that $\bar x$ is a strongly exposed point of
	$\Cmin$ implies the convergences $\hat x^{\alpha_k}\to \bar x$ and
	$\hat x^{\gamma_k}\to\bar x$.
	Next,
	using $\hat x^{\alpha_k}\in A^{\alpha_k}$,
	$\hat x^{\gamma_k}\in A^{\gamma_k}$,
	and $\hat x^{\alpha_k}\in D_\ell$ a.e.\ on $\beta_k$,
	we observe that
	\begin{equation*}
		\begin{aligned}
			\hat x^{\alpha_k}_2 \leq0
			\;\land\;
			\hat x^{\alpha_k}_1 > 1/\ell
			\;\land\;
			\hat x^{\gamma_k}_1 \leq0
			\qquad\aeon\beta_k\cap\alpha_k,
			\\
			\hat x^{\alpha_k}_1 \leq0
			\;\land\;
			\hat x^{\alpha_k}_2 > 1/\ell
			\;\land\;
			\hat x^{\gamma_k}_2 \leq0
			\qquad\aeon\beta_k\setminus\alpha_k
		\end{aligned}
	\end{equation*}
	holds.
	This implies
	\begin{equation*}
		\ell\max\paren[\big]{\abs{\hat x^{\alpha_k}(\omega)_1-\hat x^{\gamma_k}(\omega)_1}
		,
	\abs{\hat x^{\alpha_k}(\omega)_2-\hat x^{\gamma_k}(\omega)_2}}
		\geq 1
		\quad\aeon\beta_k.
	\end{equation*}
	By integrating, we obtain
	\begin{equation*}
		\begin{aligned}
			\meas{\beta_k}
			&=
			\int_{\beta_k} 1 \,\d m
			\leq
			\int_{\beta_k} \ell^p \abs{\hat x^{\alpha_k}_1-\hat x^{\gamma_k}_1}^p \,\d m
			+ 
			\int_{\beta_k} \ell^q \abs{\hat x^{\alpha_k}_2-\hat x^{\gamma_k}_2}^q \,\d m
			\\
			&\leq
			\ell^p\norm{\hat x^{\alpha_k}_1-\hat x^{\gamma_k}_1}_{L^p(\Omega)}^p
			+
			\ell^q\norm{\hat x^{\alpha_k}_2-\hat x^{\gamma_k}_2}_{L^q(\Omega)}^q
			\to0
			\qquad\text{as}\;k\to\infty,
		\end{aligned}
	\end{equation*}
	where the last convergence follows from
	$\hat x^{\alpha_k}\to\bar x$ and $\hat x^{\gamma_k}\to\bar x$.
	Without loss of generality we can say
	that the convergence $\hat x^{\alpha_k}\to\bar x$ also holds
	pointwise a.e.\ (otherwise one can choose a subsequence with this property).
	Since $D_\ell$ is open, 
	$\chi_{\set{\bar x\in D_\ell}}
	\leq\liminf_{k\to\infty}\chi_{\set{\hat x^{\alpha_k}\in D_\ell}}$
	and therefore
	$\chi_{\set{\bar x\in D_\ell}\cap \set{c_0\leq\ell}}
	\leq\liminf_{k\to\infty}\chi_{\beta_k}$
	hold a.e.\ in $\Omega$.
	Applying Fatou's Lemma to the last inequality yields
	\begin{equation*}
		\meas[\big]{\set{\bar x\in D_\ell}\cap \set{c_0\leq\ell}}
		\leq \liminf_{k\to\infty} \meas{\beta_k}
		=0.
	\end{equation*}
	Since $\ell\in\N$, $\ell\geq2$ was arbitrary,
	this implies
	\begin{equation*}
		\begin{aligned}
			0 &=
			\meas[\bigg]{
				\bigcup_{\ell=2}^\infty
				\set{\bar x\in D_\ell}\cap \set{c_0\leq\ell}
			}
			= \meas[\bigg]{
				\paren[\Big]{
					\bigcup_{\ell=2}^\infty \set{\bar x\in D_\ell}
				}
				\cap
				\paren[\Big]{
					\bigcup_{\ell=2}^\infty \set{c_0\leq\ell}
				}
			}
			\\ &= 
			\meas[\bigg]{\paren[\Big]{ 
			\bigcup_{\ell=2}^\infty \set{\bar x\in D_\ell}\cap\Omega }}
			= \meas[\bigg]{ \bigcup_{\ell=2}^\infty \set{\bar x\in D_\ell} }
			= \meas{\set{\bar x_1 \bar x_2 < 0}},
		\end{aligned}
	\end{equation*}
	where we used that $\set[\big]{\set{\bar x\in D_\ell}}_{\ell\in\N}$
	and $\set[\big]{\set{c_0\leq\ell}}_{\ell\in\N}$
	are monotonically increasing sequences.
	Thus, we have $\bar x_1 \bar x_2 \ge 0$ a.e.\ on $\Omega$.
	Due to $\hat x^{\alpha_k}\in A^{\alpha_k}$ we have
	$\min(\hat x^{\alpha_k}_1,\hat x^{\alpha_k}_2)\leq0$ a.e.\ on $\Omega$,
	and by considering a pointwise a.e.\ converging subsequence
	of $\set{\hat x^{\alpha_k}}_{k\in\N}$, we obtain
	$\min(\bar x_1,\bar x_2)\leq0$ a.e.\ on $\Omega$.
	In summary, we have
	$\min(\bar\mu,\bar\nu)\leq0$
	and $\bar\mu\bar\nu\geq0$ a.e.\ on $\Omega$.
	However, this is equivalent to \eqref{eq:mstat}.
\end{proof}
Now we have all the necessary ingredients to prove \cref{thm:schinabeck}.
\begin{proof}[Proof of \cref{thm:schinabeck}]
	As the space $\lptimeslq$ is reflexive,
	the set $\Cmin$ is a weakly compact subset of $\lptimeslq$. 
	Thus, according to \cite[Theorem~8.28]{FabianHabalaHajekAn2001}
	the set $\Cmin$ is the closed convex hull of its strongly exposed points.
	Since $\Cmin$ is nonempty, this implies that $\Cmin$ has at least one
	strongly exposed point.
	By \cref{lem:strongly_exposed_is_M}, this points satisfies \eqref{eq:mstat}
	and our proof of \cref{thm:schinabeck} is complete.
\end{proof}

\begin{remark}
	\label{rem:more_multipliers}
	We mention that \cref{thm:schinabeck}
	not only yields the existence of a single multiplier pair
	satisfying \eqref{eq:mstat}.
	Indeed, every strongly exposed point of $\Cmin$
	satisfies \eqref{eq:mstat}
	and $\Cmin$ is the closed convex hull of its strongly exposed points.
	Moreover, $\Cmin$ has a non-empty intersection with all $A^\alpha$.

	To summarize, the closed convex hull of all pairs $(\hat\mu, \hat\nu) \in B$
	satisfying \eqref{eq:mstat}
	has a non-empty intersection with $A^\alpha$ for all $\alpha \in \AA$.

	Furthermore, suppose that there exists no point $(\bar\mu,\bar\nu)\in B$ 
	with $\bar\mu\leq0$ and $\bar\nu\leq0$ a.e.\ in $\Omega$
	(which corresponds to strong stationarity).
	This means $\bigcap_{\alpha\in\AA} A^{\alpha}\cap B=\emptyset$,
	and therefore there must exist at least two points 
	$(\bar\mu,\bar\nu)\in B$ that satisfy \eqref{eq:mstat}.
\end{remark}

The assumptions in \cref{thm:schinabeck} require that there is a bound
on $\norm{\mu^\alpha}_{L^p(\Omega)},\norm{\nu^\alpha}_{L^q(\Omega)}$.
The next result is based on \cref{thm:schinabeck} but has different assumptions.
In particular, it could also be applied in situations where 
$\norm{\mu^\alpha}_{L^p(\Omega)},\norm{\nu^\alpha}_{L^q(\Omega)}$
might be unbounded, provided that some other assumptions are satisfied.
\begin{lemma}
	\label{lem:preprocessing}
	Let $(\Omega,\AA,m)$ be a measure space
	and $Y$ be a reflexive Banach space.
	Suppose that for all $\alpha\in\AA$
	there exist
	$\mu^\alpha,\nu^\alpha\in L^2(\Omega)$, $\rho^\alpha\in Y$
	such that
	\begin{align}
		\label{eq:mu_nu_leq}
		\mu^\alpha &\leq0\quad\aeon\Omega\setminus\alpha,
		&
		\nu^\alpha &\leq0\quad\aeon\alpha.
	\end{align}
	Furthermore, we assume that there exist functions $c_0\in L^2(\Omega)$,
	$c_1 \colon \AA\to(0,\infty)$
	with
	\begin{align}
		\label{eq:partial_boundedness}
		\max(\abs{\mu^\alpha+\nu^\alpha},\nu^\alpha) &\leq \mrep{c_0 }{c_1(\alpha) c_0}
		\quad\aeon\Omega
		\quad\forall\alpha\in\AA,
		\\
		\label{eq:alpha_dependent_bound}
		\abs{\nu^\alpha} &\leq c_1(\alpha) c_0
		\quad\aeon\Omega
		\quad\forall\alpha\in\AA,
	\end{align}
	and a bounded linear operator $T \colon L^2(\Omega)\to Y$
	and constant $c_2>0$ with
	\begin{equation}
		\label{eq:rho_bounded}
		\norm{\rho^\alpha-T\nu^\alpha}_Y \leq c_2
		\quad\forall\alpha\in\AA.
	\end{equation}
	Then there exists a point
	\begin{equation*}
		(\bar\mu,\bar\nu,\bar\rho) \in
		B:= \clconv\set{ (\mu^\alpha,\nu^\alpha,\rho^\alpha)
		\given \alpha\in\AA}
	\end{equation*}
	such that $\bar\mu$, $\bar\nu$ satisfy \eqref{eq:mstat}.
\end{lemma}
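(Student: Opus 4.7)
My plan is to reduce \cref{lem:preprocessing} to \cref{thm:schinabeck} by replacing each pair $(\mu^\alpha,\nu^\alpha)$ with a uniformly bounded proxy $(\tilde\mu^\alpha,\tilde\nu^\alpha)$, and then to transfer the resulting M-stationary pair back into $B$ via weak compactness in $L^2(\Omega)\times L^2(\Omega)\times Y$, which is reflexive.

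I set $\tilde\mu^\alpha := \mu^\alpha + \nu^\alpha\chi_\alpha$ and $\tilde\nu^\alpha := \nu^\alpha\chi_{\Omega\setminus\alpha}$. Since $\tilde\mu^\alpha = \mu^\alpha\le 0$ on $\Omega\setminus\alpha$ and $\tilde\nu^\alpha = 0$ on $\alpha$, the sign condition \eqref{eq:mu_nu_leq1} is retained. The pointwise estimates in \eqref{eq:partial_boundedness}, combined with the sign conditions \eqref{eq:mu_nu_leq}, force $\nu^\alpha\in[-c_0,c_0]$ on $\Omega\setminus\alpha$ (from $\mu^\alpha\le 0$ and $|\mu^\alpha+\nu^\alpha|\le c_0$), whence $|\tilde\nu^\alpha|\le c_0$ and, again via $|\mu^\alpha+\nu^\alpha|\le c_0$, also $|\tilde\mu^\alpha|\le 2c_0$ a.e.\ on $\Omega$. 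In particular $\max(\tilde\mu^\alpha,\tilde\nu^\alpha)\le c_0$ and the uniform $L^2$-bound \eqref{eq:bounds_on_mu_nu_2} hold with $p=q=2$, so \cref{thm:schinabeck} furnishes a pair $(\bar{\tilde\mu},\bar{\tilde\nu})\in\clconv\{(\tilde\mu^\alpha,\tilde\nu^\alpha):\alpha\in\AA\}$ satisfying \eqref{eq:mstat}.

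Next, I pick convex combinations $w_n := \sum_i\lambda_i^{(n)}(\tilde\mu^{\alpha_i^{(n)}},\tilde\nu^{\alpha_i^{(n)}})\to(\bar{\tilde\mu},\bar{\tilde\nu})$ strongly in $L^2\times L^2$ and form the associated triples $v_n := \sum_i\lambda_i^{(n)}(\mu^{\alpha_i^{(n)}},\nu^{\alpha_i^{(n)}},\rho^{\alpha_i^{(n)}})\in B$. The identity $\tilde\mu^\alpha+\tilde\nu^\alpha=\mu^\alpha+\nu^\alpha$ gives $v_n^{(1)}+v_n^{(2)}\to\bar{\tilde\mu}+\bar{\tilde\nu}$ strongly in $L^2$, while \eqref{eq:rho_bounded} renders $v_n^{(3)}-T v_n^{(2)} = \sum_i\lambda_i^{(n)}(\rho^{\alpha_i^{(n)}}-T\nu^{\alpha_i^{(n)}})$ uniformly bounded in $Y$ and therefore weakly precompact. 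Setting $\Sigma_n := v_n^{(2)}-w_n^{(2)} = \sum_i\lambda_i^{(n)}\nu^{\alpha_i^{(n)}}\chi_{\alpha_i^{(n)}}\le 0$, weak convergence of a subsequence of $(v_n)$ in $L^2\times L^2\times Y$ reduces to weak $L^2$-convergence of a subsequence of $(\Sigma_n)$, and any such weak limit automatically lies in $B$ because $B$ is norm-closed and convex, hence weakly closed.

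The main obstacle I anticipate is to secure a uniform $L^2$-bound on $(\Sigma_n)$, since \eqref{eq:alpha_dependent_bound} is only $\alpha$-dependent and a crude term-by-term estimate does not suffice, and then to verify that the non-positive weak limit $\bar\Sigma$ preserves \eqref{eq:mstat} in passing from $(\bar{\tilde\mu},\bar{\tilde\nu})$ to $(\bar\mu,\bar\nu)=(\bar{\tilde\mu}-\bar\Sigma,\bar{\tilde\nu}+\bar\Sigma)$. The plan is to exploit the flexibility afforded by \cref{rem:more_multipliers}---the closed convex hull of M-stationary tilded pairs meets every $A^\alpha$---to refine $(\bar{\tilde\mu},\bar{\tilde\nu})$ and its approximating sequence so that $(\Sigma_n)$ is $L^2$-bounded and $\bar\Sigma$ is supported only where the pointwise M-conditions are compatible with a non-positive shift of the first two coordinates. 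Pushing through this careful pointwise bookkeeping, so that \eqref{eq:mstat} survives the weak limit, is the technical heart of the argument.
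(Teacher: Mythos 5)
There is a genuine gap, and it sits exactly where you yourself locate ``the technical heart'': the transfer from the proxy pair back into $B$. Your tilded multipliers $\tilde\mu^\alpha = \mu^\alpha + \nu^\alpha\chi_\alpha$, $\tilde\nu^\alpha = \nu^\alpha\chi_{\Omega\setminus\alpha}$ are indeed uniformly bounded by $2c_0$ (your pointwise estimates are correct), so \cref{thm:schinabeck} applies to them; but the resulting pair lies in $\clconv\set{(\tilde\mu^\alpha,\tilde\nu^\alpha)\given\alpha\in\AA}$, which is \emph{not} contained in (a projection of) $B$, since the tilded pairs are not convex combinations of the original ones. The correction term $\Sigma_n = \sum_i\lambda_i^{(n)}\nu^{\alpha_i^{(n)}}\chi_{\alpha_i^{(n)}}$ has no uniform $L^2$-bound: on $\alpha$ the only available control on $\nu^\alpha$ is \eqref{eq:alpha_dependent_bound} with the $\alpha$-dependent constant $c_1(\alpha)$ (on $\alpha$ one has $\nu^\alpha\le 0$ but $\mu^\alpha$ is unsigned and unbounded, so \eqref{eq:partial_boundedness} gives no lower bound), and nothing prevents $\norm{\Sigma_n}_{L^2(\Omega)}\to\infty$ along your approximating sequence. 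Even granting a weak limit $\bar\Sigma\le 0$, the shifted pair $(\bar{\tilde\mu}-\bar\Sigma,\,\bar{\tilde\nu}+\bar\Sigma)$ need not satisfy \eqref{eq:mstat}: at a point where $\bar{\tilde\nu}=0$ and $\bar{\tilde\mu}<0$ (which satisfies \eqref{eq:mstat}), a shift $\bar\Sigma<0$ can produce $\bar\mu>0$ and $\bar\nu<0$, violating both branches of \eqref{eq:mstat}. The appeal to \cref{rem:more_multipliers} and ``careful pointwise bookkeeping'' is a hope, not an argument; as written, neither the compactness of $(\Sigma_n)$ nor the stability of \eqref{eq:mstat} under the shift can be established.

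The paper avoids this trap by never leaving the convex hull: for each $\alpha$ it minimizes, over $B\cap A^\alpha$, the best constant $d$ in a pointwise bound $\abs{\nu}\le d\,c_0$ (the quantity $c_3(\alpha)$), shows by weak compactness that the infimum is attained by some $(\hat\mu^\alpha,\hat\nu^\alpha,\hat\rho^\alpha)\in B\cap A^\alpha$, and then proves $c_3(\alpha)\le 1$ by a contradiction argument that mixes this minimizer with $(\mu^\beta,\nu^\beta,\rho^\beta)$ for $\beta=\alpha\cap\set{\hat\nu^\alpha\ge -c_0}$ to strictly improve the bound. This yields uniformly bounded representatives \emph{inside} $B\cap A^\alpha$, so \cref{thm:schinabeck} applied to them produces an M-stationary pair already compatible with $B$; the $\rho$-component is then recovered from \eqref{eq:rho_bounded} (which makes the accompanying $\rho_k$ bounded, since $\nu_k$ converges strongly) and the weak sequential closedness of $B$. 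If you want to salvage your route, the missing ingredient is precisely such an in-hull replacement of $(\mu^\alpha,\nu^\alpha)$ by a uniformly bounded element of $B\cap A^\alpha$ — at which point your tilde construction becomes unnecessary.
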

\begin{proof}
	We want to apply \cref{thm:schinabeck}.
	For that purpose, we want to modify $\mu^\alpha,\nu^\alpha$
	in such a way that they are bounded in $L^2(\Omega)$.
	We define the sets
	\begin{align*}
		A^\alpha &:= \set{(\mu,\nu,\rho)\in L^2(\Omega)^2\times Y\given
			\mu\leq0\aeon\Omega\setminus\alpha,\;
		\nu\leq0\aeon\alpha},
		\\
		B &:= \clconv\set{(\mu^\alpha,\nu^\alpha,\rho^\alpha)
		\given \alpha\in\AA}.
	\end{align*}
	For convexity and continuity reasons we also have the properties
	\begin{align}
		\label{eq:B_mu_nu_estimate}
		\max(\abs{\mu+\nu},\nu)&\leq c_0
		\quad\aeon\Omega
		\quad\forall (\mu,\nu,\rho)\in B,
		\\
		\label{eq:B_rho_estimate}
		\norm{\rho-T\nu}&\leq c_2
		\qquad\forall (\mu,\nu,\rho)\in B.
	\end{align}

	Let $\alpha\in\AA$ be given.
	We define 
	\begin{equation}
		\label{eq:c_3_def}
		c_3(\alpha):=\inf\set{d>0\given
			\exists (\mu,\nu,\rho)\in B\cap A^\alpha:
		\abs{\nu}\leq dc_0\aeon\Omega}.
	\end{equation}
	We claim that there exists 
	$(\hat\mu^\alpha,\hat\nu^\alpha,\hat\rho^\alpha)\in B\cap A^\alpha$
	with
	\begin{equation}
		\label{eq:hat_mu_alpha_intro}
		\abs{\hat\nu^\alpha}\leq c_3(\alpha)c_0
		\quad\aeon\Omega.
	\end{equation}
	Indeed, since $c_3(\alpha)\leq c_1(\alpha)<\infty$ we find
	sequences $\set{d_k}_{k\in\N}\subset(0,\infty)$,
	$\set{(\mu^k,\nu^k,\rho^k)}_{k\in\N}\subset B\cap A^\alpha$
	with $d_k\downto c_3(\alpha)$ and
	$\abs{\nu^k}\leq d_kc_0$ a.e.\ on $\Omega$.
	Then $\set{\nu^k}_{k\in\N}$ is a bounded sequence.
	By \eqref{eq:B_mu_nu_estimate} and \eqref{eq:B_rho_estimate}
	it follows that $\set{(\mu^k,\nu^k,\rho^k)}_{k\in\N}$
	is also a bounded sequence 
	and therefore has a weakly convergent subsequence.
	Let $(\hat\mu^\alpha,\hat\nu^\alpha,\hat\rho^\alpha)$
	denote a weak limit of a subsequence.
	Then $(\hat\mu^\alpha,\hat\nu^\alpha,\hat\rho^\alpha)\in B\cap A^\alpha$
	holds since $B\cap A^\alpha$ is a closed convex set.
	Similarly, for all $k\in\N$ the condition
	$\abs{\hat\nu^\alpha}\leq d_kc_0$ holds a.e.\ on $\Omega$.
	Then \eqref{eq:hat_mu_alpha_intro} follows from the convergence
	$d_k\downto c_3(\alpha)$.

	Our next goal will be to show that $c_3(\alpha)\leq 1$ holds.
	Suppose, by contradiction, that $c_3(\alpha)>1$ holds.
	Combining \eqref{eq:B_mu_nu_estimate} with
	$(\hat\mu^\alpha,\hat\nu^\alpha,\hat\rho^\alpha)\in B\cap A^\alpha$
	yields the conditions
	\begin{align*}
		\hat\nu^\alpha &\geq -\hat\mu^\alpha-c_0
		\geq -c_0
		\quad\aeon\Omega\setminus\alpha,
		\\
		\hat\nu^\alpha &\leq c_0
		\quad\aeon\Omega.
	\end{align*}
	For $t\in[0,1]$ we make the definitions
	\begin{align*}
		\beta &:= \alpha \cap \set{\hat\nu^\alpha\geq-c_0}\in\AA,
		\\
		(\mu^t,\nu^t,\rho^t) &:=
		(1-t)(\hat\mu^\alpha,\hat\nu^\alpha,\hat\rho^\alpha)
		+ t(\mu^\beta,\nu^\beta,\rho^\beta)
		\in B.
	\end{align*}
	We claim that $(\mu^t,\nu^t,\rho^t)\in A^\alpha$
	holds for all $t\in[0,1/2]$.
	Indeed, the condition
	$\nu^t\leq 0$ a.e.\ on $\alpha$ follows from
	$\nu^\beta\leq0$ a.e.\ on $\beta\subset\alpha$
	and $\nu^\beta\leq c_0$ a.e.\ on
	$\alpha\setminus\beta=\set{\hat\nu^\alpha< -c_0}$,
	and the condition
	$\mu^t\leq 0$ a.e.\ on $\Omega\setminus\alpha$ follows from
	$\mu^\beta\leq0$ a.e.\ on
	$\Omega\setminus\alpha\subset\Omega\setminus\beta$.

	We are interested in lower bounds on $\nu^t$.
	Combining \eqref{eq:mu_nu_leq} and \eqref{eq:partial_boundedness} yields
	the estimate $\nu^\beta\geq -\mu^\beta-c_0\geq -c_0$
	a.e.\ on $\Omega\setminus\beta$.
	Now we have
	\begin{align*}
		\nu^t = (1-t)\hat\nu^\alpha + t\nu^\beta
		&\geq (1-t) (-c_3(\alpha)c_0) -t c_0
		&&\aeon \Omega\setminus\beta,
		\\
		\nu^t = (1-t)\hat\nu^\alpha + t\nu^\beta
		&\geq (1-t)(-c_0) + t(-c_1(\beta)c_0)
		&&\aeon \beta,
		\\
		\nu^t  = (1-t)\hat\nu^\alpha + t\nu^\beta
		&\leq (1-t)c_0 + tc_0 = c_0
		&&\aeon\Omega.
	\end{align*}
	This implies
	\begin{equation*}
		\abs{\nu^t} \leq 
		\max\paren[\big]{
		(1-t)c_3(\alpha)+t,1-t+tc_1(\beta),1}c_0
		\qquad\aeon\Omega.
	\end{equation*}
	We set $\hat t\in (0,1/2)$ such that 
	$(1-\hat t)c_3(\alpha)+\hat t>1-\hat t+\hat tc_1(\beta)$
	holds (which is possible due to $c_3(\alpha)>1$).
	This results in
	$ \max((1-\hat t)c_3(\alpha)+\hat t,1)< c_3(\alpha)$ and
	\begin{equation*}
		\abs{\nu^{\hat t}}
		\leq \max\paren[\big]{(1-\hat t)c_3(\alpha)+\hat t,1}c_0
		< c_3(\alpha)c_0
		\quad\aeon\Omega.
	\end{equation*}
	Due to $(\mu^{\hat t},\nu^{\hat t},\rho^{\hat t})\in B\cap A^\alpha$
	this is a contradiction to \eqref{eq:c_3_def}.
	Thus, our assumption $c_3(\alpha)>1$ is false.
	Using \eqref{eq:hat_mu_alpha_intro} results
	in $\abs{\hat\nu^\alpha}\leq c_0$ a.e.\ on $\Omega$.
	Then we also have
	\begin{equation*}
		\max(\hat\nu^\alpha,\hat\mu^\alpha)\leq 2c_0
		\aeon\Omega
	\end{equation*}
	and
	\begin{equation*}
		\norm{\hat\nu^\alpha}_{L^2(\Omega)}
		+\norm{\hat\mu^\alpha}_{L^2(\Omega)}
		\leq 3\norm{c_0}_{L^2(\Omega)}.
	\end{equation*}
	Since the bounds no longer depend on $\alpha$,
	we can apply \cref{thm:schinabeck},
	which yields a point
	$(\bar\mu,\bar\nu)\in\clconv\set{(\hat\mu^\alpha,\hat\nu^\alpha)\given\alpha\in\AA}$
	such that $\bar\mu$, $\bar\nu$ satisfy \eqref{eq:mstat}.
	By the definition of $\clconv$, there exists a sequence
	$\set{(\mu_k,\nu_k)}_{k\in\N}\subset
	\conv\set{(\hat\mu^\alpha,\hat\nu^\alpha)\given\alpha\in\AA}$
	such that $(\mu_k,\nu_k)\to(\bar\mu,\bar\nu)$.
	Let $\set{\rho_k}_{k\in\N}\subset Y$ be a sequence such that
	$(\mu_k,\nu_k,\rho_k)\in
	\conv\set{(\hat\mu^\alpha,\hat\nu^\alpha,\hat\rho^\alpha)\given\alpha\in\AA} \subset B$
	holds for all $k\in\N$.
	Due to \eqref{eq:B_rho_estimate},
	the sequence $\set{\rho_k}_{k\in\N}$ is a bounded sequence
	and, thus, possesses a weak accumulation point $\bar\rho$.
	Since $B$ is weakly sequentially closed,
	this yields $(\bar\mu,\bar\nu,\bar\rho) \in B$,
	which completes the proof.
\end{proof}

\section{A class of linear MPCCs in Lebesgue spaces}
\label{sec:linear_mpcc}
We consider the linear MPCC
\begin{equation*}
	\label{eq:mpcc2}
	\tag{MPCC$_{\text{lin}}$}
	\begin{minproblem}[u , w,\xi\in L^2(\Omega)]{%
			\dual{F_u}{u} + \dual{F_w}{w} + \dual{F_\xi}{\xi}
		}
		A u -w - \xi &= 0,
		\\
		u&= 0\quad\aeon\Omega^{0+},
		\\
		0 \leq u \perp \xi &\geq 0\quad\aeon\Omega^{00},
		\\
		\xi &= 0\quad\aeon\Omega^{+0},
		\\
		w&\geq 0 \quad\aeon\Omega_w,
	\end{minproblem}
\end{equation*}
where $A \colon L^2(\Omega)\to L^2(\Omega)$
is a linear and continuous operator,
$(\Omega,\AA,m)$ is a measure space,
and $F_u, F_w,F_\xi\in L^{2}(\Omega)$ are fixed.
The sets $\Omega^{0+},\Omega^{00},\Omega^{+0}$
are a measurable partition of $\Omega$,
and $\Omega_w\subset\Omega$ is measurable.

Since \eqref{eq:mpcc2}
will appear as a linearized problem,
see \cref{lem:linearized_OC} below,
we are only interested
in the feasible point $(0,0,0)$.
In the case that $(0,0,0)$ is a minimizer, we want to show that
M-stationarity holds (under some additional assumptions on the structure
of \eqref{eq:mpcc2}).
As we are going to apply the methods in \cref{sec:a_to_m},
we first need to show that the multipliers to the system of
A$_\beta$-stationarity exist for all measurable $\beta\subset\Omega^{00}$
and that they satisfy the regularity requirements.

\subsection{A$_\forall$-stationarity}
\label{sec:astat}

In this section we show
that A$_\forall$-stationarity
is satisfied.
To this end,
let $\beta\subset\Omega^{00}$ be measurable.
The A$_\beta$-stationary multipliers
will be constructed
via the optimality system of
the tightened linear optimization problem
\begin{equation*}
	\label{eq:lpbeta}
	\tag{$\textup{LP}(\beta)$}
	\begin{minproblem}[u , w,\xi\in L^2(\Omega)]{%
			\dual{F_u}{u} + \dual{F_w}{w} + \dual{F_\xi}{\xi}
		}
		Au &-w - \xi = 0,
		\\
		u &\geq0 \quad\aeon \Omega^{00}\setminus\beta,
		&
		u&= 0\quad\aeon\Omega^{0+}\cup\beta,
		\\
		\xi &\geq 0 \quad\aeon \beta,
		&
		\xi &= 0\quad\aeon\Omega^{+0}\cup (\Omega^{00}\setminus\beta),
		\\
		w &\geq 0 \quad\aeon \Omega_w.
	\end{minproblem}
\end{equation*}
Note that if $(0,0,0)$ is a minimizer of \eqref{eq:mpcc2}, then it is also
a minimizer of \eqref{eq:lpbeta}.
In the sequel,
we show that $(0,0,0)$ is a KKT point of \eqref{eq:lpbeta}
if it is a minimizer.
Although the problem \eqref{eq:lpbeta} is linear and seems to be quite innocent,
this can be quite complicated
and only works for some instances of \eqref{eq:mpcc2}.
We provide a counterexample (\cref{ex:no_fcq})
which fails to satisfy the KKT conditions
at the local minimizer $(0,0,0)$ of \eqref{eq:lpbeta}.

The KKT system for \eqref{eq:lpbeta} at $(0,0,0)$ is
given by
\begin{equation*}
	\label{eq:kktbeta}
	\tag{$\textup{KKT}(\beta)$}
	\begin{aligned}
		F_u &+ A\adjoint p + \mu = 0,
		&
		F_w &- p + \lambda = 0,
		&
		F_\xi - p + \nu &= 0,
		\\
		\mu&\leq0 \quad\aeon\Omega^{00}\setminus\beta,
		&
		\mu&=0 \quad\aeon\Omega^{+0},
		\\
		\nu&\leq0 \quad\aeon\beta,
		&
		\nu&=0 \quad\aeon\Omega^{0+},
		\\
		\lambda &\leq 0 \quad\aeon\Omega_w,
		&
		\lambda &= 0 \quad\aeon\Omega\setminus\Omega_w,
	\end{aligned}
\end{equation*}
where $p,\nu,\lambda,\mu\in L^2(\Omega)$ are multipliers.
Note that this system coincides with the system of 
A$_\beta$-stationarity for \eqref{eq:mpcc2}.

We formulate scenarios under which we are
able to prove A$_\forall$-stationarity.
\begin{assumption}
	\label{asm:operator_A}
	\begin{enumerate}
		\item
			\label{asm:operator_A:nonneg}
			The operator $A$ satisfies $A\adjoint v\geq0$ a.e.\ on $\Omega$
			for all $v\in L^2(\Omega)$ with $v\geq0$ a.e.\ on $\Omega$,
			and $m(\Omega^{+0})=0$ holds.
		\item
			\label{asm:operator_A:average}
			The set $\Omega$ has finite measure and
			the operator $A$ is given by
			$Av := v + \dual{1}{v}$ for all $v\in L^2(\Omega)$.
		\item
			\label{asm:operator_A:average_with_scalars}
			The set $\Omega$ has finite measure and
			the operator $A$ is given by
			$Av := d_1v + d_2\dual{1}{v}$ for all $v\in L^2(\Omega)$,
			where $d_1,d_2>0$ are fixed.
	\end{enumerate}
\end{assumption}
Note that the operator given in part~\ref{asm:operator_A:average}
also satisfies the nonnegativity condition for $A\adjoint$ in 
part~\ref{asm:operator_A:nonneg}.
Clearly, part~\ref{asm:operator_A:average} is only a special case of
part~\ref{asm:operator_A:average_with_scalars}.

In the following, we define the function
\begin{equation}
	\label{eq:c_0}
	c_0 := 
	A\adjoint\paren[\big]{\abs{F_u}+\abs{F_w}+\abs{F_\xi}}
	+\abs{F_u}+\abs{F_w}+\abs{F_\xi}\in L^2(\Omega).
\end{equation}
Under \cref{asm:operator_A}
we have $c_0 \ge 0$ a.e.\ due to the nonnegativity of $A\adjoint$.
This function (together with some constants) 
will be used as a pointwise a.e.\ upper bound on the multipliers in the next
\lcnamecrefs{lem:upper_bound_special_case}.
We show that if multipliers exist, that one can also find multipliers
that can be bounded by $c_0$ (and some constant, which can depend on $\beta$
in the case of \itemref{asm:operator_A:average}).
This serves two purposes:
First, this can be used to show that the set of functions
$F_u,F_w,F_\xi$ for which multipliers exist is closed,
which is sufficient to show the existence of KKT multipliers if $(0,0,0)$ is a minimizer,
see the proof of \cref{prop:kkt_lp_beta}.
Second, the pointwise bounds are required to apply the
theory from \cref{sec:a_to_m}.

Let us start with the easier case of
\itemref{asm:operator_A:nonneg}.
\begin{lemma}
	\label{lem:upper_bound_special_case}
	Suppose that \itemref{asm:operator_A:nonneg} holds.
	Let $p_0,\mu_0,\nu_0,\lambda_0$ be multipliers that satisfy \eqref{eq:kktbeta}.
	Then one can find multipliers $p_1,\mu_1,\nu_1,\lambda_1$
	that satisfy \eqref{eq:kktbeta} such that
	\begin{equation*}
		\max\paren[\big]{\abs{p_1},\abs{\nu_1},\abs{\lambda_1},\abs{\mu_1}}
		\leq
		2c_0
		\quad \aeon \Omega.
	\end{equation*}
\end{lemma}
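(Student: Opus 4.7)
The plan is to construct the new multipliers directly from a single truncation of $p_0$ from below. Specifically, set
\begin{equation*}
	p_1 := \max\bigl(p_0, -(\abs{F_w}+\abs{F_\xi})\bigr),
	\quad
	\lambda_1 := p_1 - F_w,
	\quad
	\nu_1 := p_1 - F_\xi,
	\quad
	\mu_1 := -F_u - A\adjoint p_1,
\end{equation*}
so that the three equations of \eqref{eq:kktbeta} hold by construction. The task then reduces to verifying the sign conditions and the pointwise bounds.

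First I would extract from the original multipliers the one-sided bounds on $p_0$. From $\lambda_0 \leq 0$ on $\Omega_w$ and $\lambda_0 = 0$ off $\Omega_w$ one obtains $p_0 = F_w$ on $\Omega\setminus\Omega_w$ and $p_0 \leq F_w$ a.e.\ on $\Omega$. From $\nu_0$ one gets $p_0 = F_\xi$ on $\Omega^{0+}$ and $p_0 \leq F_\xi$ on $\beta$. Because $-(|F_w|+|F_\xi|) \leq F_w$ and $\leq F_\xi$, the truncation does not alter $p_0$ on $\Omega^{0+} \cup (\Omega\setminus\Omega_w)$, giving $\nu_1 = 0$ on $\Omega^{0+}$ and $\lambda_1 = 0$ on $\Omega\setminus\Omega_w$. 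Moreover $p_1 \leq F_w$ on $\Omega_w$ and $p_1 \leq F_\xi$ on $\beta$, yielding $\lambda_1 \leq 0$ on $\Omega_w$ and $\nu_1 \leq 0$ on $\beta$.

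The sign condition on $\mu_1$ is the step that relies crucially on \itemref{asm:operator_A:nonneg}. Since $p_1 \geq p_0$ pointwise, the positivity of $A\adjoint$ gives $A\adjoint p_1 \geq A\adjoint p_0$, so
\begin{equation*}
	\mu_1 = -F_u - A\adjoint p_1 \leq -F_u - A\adjoint p_0 = \mu_0 \leq 0
	\quad\aeon\Omega^{00}\setminus\beta.
\end{equation*}
The condition $\mu_1 = 0$ on $\Omega^{+0}$ is vacuous because $m(\Omega^{+0})=0$.

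For the pointwise bounds, combining $p_0 \leq F_w \leq |F_w|$ with the truncation floor yields $|p_1| \leq |F_w|+|F_\xi| \leq c_0$. Hence $|\lambda_1| \leq |p_1|+|F_w| \leq 2c_0$ and $|\nu_1| \leq |p_1|+|F_\xi| \leq 2c_0$. Finally, using $|A\adjoint p_1| \leq A\adjoint |p_1| \leq A\adjoint(|F_w|+|F_\xi|)$ (again by positivity of $A\adjoint$), we get $|\mu_1| \leq |F_u| + A\adjoint(|F_u|+|F_w|+|F_\xi|) \leq c_0 \leq 2c_0$. The main delicate point is picking the truncation level large enough to preserve the equalities $p = F_w$, $p = F_\xi$ on $\Omega\setminus\Omega_w$ and $\Omega^{0+}$, yet small enough that $A\adjoint p_1$ stays controlled by $c_0$; the choice $-(|F_w|+|F_\xi|)$ achieves both simultaneously.
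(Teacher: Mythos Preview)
Your proof is correct and follows essentially the same strategy as the paper: replace $p_0$ by $p_0$ plus a nonnegative shift so that the sign conditions on $\lambda,\nu$ are preserved automatically and the one on $\mu$ via the positivity of $A\adjoint$ together with $m(\Omega^{+0})=0$, while making $\abs{p_1}$ pointwise controlled by $c_0$. The only difference is the particular shift: the paper adds $a_0:=\max(p_0,\lambda_0,\nu_0)^-$, which forces $\max(p_1,\lambda_1,\nu_1)\ge 0$ and then derives the lower bound $p_1\ge\min(0,F_w,F_\xi)$ by a short case distinction, whereas you truncate $p_0$ directly at the fixed floor $-(|F_w|+|F_\xi|)$, which gives the lower bound immediately but requires the extra observation that this floor lies below $F_w$ and $F_\xi$ so the equalities on $\Omega\setminus\Omega_w$ and $\Omega^{0+}$ survive. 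Both choices yield $\abs{p_1}\le c_0$ and the remaining estimates are identical; your variant is arguably a little more direct.
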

\begin{proof}
	First, we introduce the function
	\begin{equation*}
		a_0 := \max(p_0,\lambda_0,\nu_0)^- \ge 0.
	\end{equation*}
	Here, $v^- := \max(-v,0)$ in a pointwise sense.
	Then we define the new multipliers via
	\begin{equation*}
		p_1 := p_0 + a_0,
		\quad
		\lambda_1 := \lambda_0 + a_0,
		\quad
		\nu_1 := \nu_0 + a_0,
		\quad
		\mu_1 := \mu_0 - A\adjoint a_0.
	\end{equation*}
	We claim that
	$p_1,\mu_1,\nu_1,\lambda_1$ still satisfies \eqref{eq:kktbeta}.
	Indeed, the first three equations of \eqref{eq:kktbeta}
	are true due to linearity.
	The conditions involving $\mu_1$ are true
	due to $A\adjoint a_0\geq0$ a.e.\ on $\Omega$ and $m(\Omega^{+0})=0$.
	On $\set{\nu_0\leq0}$ we observe that
	$\nu_0\leq -a_0$ and therefore $\nu_0\leq \nu_1\leq0$ holds,
	which implies the conditions involving $\nu_1$.
	Similarly, for the conditions involving $\lambda_1$ we can use
	$\lambda_0\leq\lambda_1\leq0$ on $\set{\lambda_0\leq0}$.

	It remains to verify the pointwise bound.
	We start with bounds for $p_1$.
	The upper bound follows from
	\begin{equation*}
		p_1 = F_w + \lambda_1 \leq F_w
		\quad\aeon\Omega.
	\end{equation*}
	We also have the lower bounds
	\begin{equation*}
		\begin{aligned}
			p_1 &\geq 0 
			\quad&&\aeon\set{p_1\geq0},
			\\
			p_1 &= F_w+\lambda_1 = F_w
			\quad&&\aeon\set{\lambda_1=0},
			\\
			p_1 &= F_{\mrep{\xi}{w}} + \mrep{\nu_1}{\lambda_1} \geq F_\xi
			\quad&&\aeon\set{\nu_1\geq0}.
		\end{aligned}
	\end{equation*}
	Due to the construction of $a_0$,
	we have
	$\Omega= \set{p_1\geq0}\cup\set{\lambda_1=0}\cup\set{\nu_1\geq0}$,
	which leads to the lower bound $p_1\geq \min(F_w,F_\xi,0)$
	a.e.\ on $\Omega$.
	Combining the above shows $\abs{p_1}\leq c_0$ a.e.\ on $\Omega$.
	Then $ \max(\abs{\nu_1},\abs{\lambda_1}) \leq 2c_0$ a.e.\ on $\Omega$ follows.
	Finally, using the sign condition of $A\adjoint$ we get
	\begin{equation*}
		\abs{\mu_1}
		\leq 
		\abs{F_u} + \abs{A\adjoint p_1}
		\leq
		\abs{F_u} + A\adjoint\abs{p_1}
		\leq
		c_0
		\quad\aeon\Omega.
	\end{equation*}
	The desired pointwise a.e.\ inequality follows by taking the maximum.
\end{proof}

The case of
\itemref{asm:operator_A:average}
is more complicated.
We require several \lcnamecrefs{lem:upper_bound_l1}
to investigate different cases.
\begin{lemma}
	\label{lem:upper_bound_l1}
	Suppose that \itemref{asm:operator_A:average} holds.
	Additionally, we assume $m(\Omega^{+0})>0$.
	Then
	\begin{equation*}
		\dual{1}{\abs{p}} \leq \paren[\big]{2+m(\Omega^{+0})^{-1}}c_0
	\end{equation*}
	holds for all $(p,\nu,\mu,\lambda)$ that satisfy \eqref{eq:kktbeta}.
\end{lemma}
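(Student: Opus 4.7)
The plan is to exploit the very specific form of the self-adjoint operator, $A^*v = v + \dual{1}{v}$, together with the condition $\mu = 0$ a.e.\ on $\Omega^{+0}$. Writing $s := \dual{1}{p}$, the first line of \eqref{eq:kktbeta} reads $\mu = -(F_u + p + s)$, so on $\Omega^{+0}$ it collapses to the pointwise identity
\[
	p = -F_u - s \quad\aeon \Omega^{+0}.
\]
This is the whole reason we need $m(\Omega^{+0})>0$: it allows us to convert this pointwise identity into a scalar equation for $s$.

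Integrating the identity over $\Omega^{+0}$ and combining with $s = \int_\Omega p \,\d m$ yields the scalar identity
\[
	s\paren[\big]{1 + m(\Omega^{+0})} = -\int_{\Omega^{+0}} F_u \,\d m + \int_{\Omega \setminus \Omega^{+0}} p \,\d m.
\]
The right-hand integral can be estimated from the second and third lines of \eqref{eq:kktbeta}: $p = F_\xi$ a.e.\ on $\Omega^{0+}$, $p = F_w$ a.e.\ on $\Omega \setminus \Omega_w$, $p \leq F_w$ a.e.\ on $\Omega_w$ and $p \leq F_\xi$ a.e.\ on $\beta$; for the matching lower bound I would additionally use $p \geq -F_u - s$ a.e.\ on $\Omega^{00} \setminus \beta$, which comes from $\mu \leq 0$ there. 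Solving the scalar identity for $s$---where dividing by $m(\Omega^{+0})$ is exactly what produces the factor $m(\Omega^{+0})^{-1}$---then leads to a two-sided bound of the form $|s| \leq C\, m(\Omega^{+0})^{-1}\, \dual{1}{c_0}$.

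Once $|s|$ is controlled, the pointwise bounds $|p| \leq |F_u| + |s|$ on $\Omega^{+0}$, $|p| = |F_\xi|$ on $\Omega^{0+}$, and $|p| = |F_w|$ on $\Omega \setminus \Omega_w$ can be integrated and assembled into the desired estimate $\dual{1}{|p|} \leq \paren[\big]{2+m(\Omega^{+0})^{-1}} \dual{1}{c_0}$, the constant $2 + m(\Omega^{+0})^{-1}$ arising from summing the contributions. The main obstacle is the set $\beta \cap \Omega_w$, on which the KKT sign conditions only furnish the pointwise upper bound $p \leq \min(F_w, F_\xi)$ and provide no direct pointwise lower bound on $p$. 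I would handle this via a self-consistency argument that feeds the available pointwise upper bound back into the scalar identity above: any unbounded negative part of $p$ on $\beta \cap \Omega_w$ would have to be absorbed by $s$, and thus would contradict the two-sided bound on $|s|$ just derived. This closing of the loop between the pointwise and integral bounds is the key technical step.
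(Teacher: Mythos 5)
Your core mechanism — use $\mu=0$ on $\Omega^{+0}$ to turn the first equation of \eqref{eq:kktbeta} into the pointwise identity $p=-F_u-s$ with $s:=\dual{1}{p}$, then control $s$ — is also the engine of the paper's proof, but the way you propose to control $s$ is circular. Your scalar identity $s\paren[\big]{1+m(\Omega^{+0})}=-\int_{\Omega^{+0}}F_u\,\d m+\int_{\Omega\setminus\Omega^{+0}}p\,\d m$ yields a lower bound on $s$ only if $\int_{\Omega\setminus\Omega^{+0}}p\,\d m$ is bounded from below, and, as you note yourself, on $\beta\cap\Omega_w$ the system \eqref{eq:kktbeta} provides no lower bound on $p$ at all ($\mu$ carries no sign on $\beta$, while $\nu\le0$ and $\lambda\le0$ give only upper bounds). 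Your remedy — that a large negative part of $p$ on $\beta\cap\Omega_w$ ``would have to be absorbed by $s$'' and contradict ``the two-sided bound on $\abs{s}$ just derived'' — presupposes exactly the bound whose derivation is blocked by that set; feeding the upper bound $p\le\min(F_w,F_\xi)$ back into the same identity cannot break this loop. (That no pointwise lower bound exists on $\beta\cap\Omega_w$ is also why \cref{lem:l2_pointwise_bound} must \emph{modify} the multipliers to obtain pointwise bounds there, whereas the present lemma must hold for the unmodified multipliers.)

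The gap is fixable, and the fix is how the paper argues: from $\lambda\le0$ on $\Omega_w$ and $\lambda=0$ elsewhere one has $p=F_w+\lambda\le F_w$ a.e.\ on all of $\Omega$, hence $\dual{1}{p^+}\le\dual{1}{F_w^+}\le c_0$; and on $\Omega^{+0}$ the identity $s=-p-F_u$ combined with $p\le F_w$ gives $s\ge -F_w-F_u$ a.e.\ on $\Omega^{+0}$, so integrating over $\Omega^{+0}$ \emph{only} (this is where $m(\Omega^{+0})>0$ and the factor $m(\Omega^{+0})^{-1}$ enter) yields $s\ge -m(\Omega^{+0})^{-1}\dual{1}{\abs{F_w}+\abs{F_u}}\ge -m(\Omega^{+0})^{-1}c_0$. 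The conclusion then follows from $\dual{1}{\abs{p}}=2\dual{1}{p^+}-s$, so the negative part of $p$ never needs pointwise control — it is handled purely in the mean. A further minor point: your final estimate is against $\dual{1}{c_0}$ rather than $c_0$; to obtain the lemma's stated form one uses that under \itemref{asm:operator_A:average} the constant $\dual{1}{\abs{F_w}+\abs{F_u}}$ is dominated a.e.\ by $A\adjoint\paren[\big]{\abs{F_w}+\abs{F_u}}\le c_0$.
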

\begin{proof}
	First, we have
	$ p=F_w+\lambda \leq F_w $
	a.e.\ on $\Omega$.
	This implies $p^+\leq F_w^+$ a.e.\ on $\Omega$
	and also $\dual{1}{p^+}\leq c_0$,
	where $v^+ := \max(v,0)$.
	From $\mu=0$ a.e.\ on $\Omega^{+0}$ we get
	$ F_u + p + \dual{1}{p} =0$ a.e.\ on $\Omega^{+0}$, or
	\begin{equation*}
		\dual{1}{p} = - p -F_u\geq -F_w-F_u
		\quad\aeon\Omega^{+0}.
	\end{equation*}
	Integrating over $\Omega^{+0}$ yields
	\begin{align*}
		m(\Omega^{+0})\dual{1}{p} 
		&\geq 
		\dual{\chi_{\Omega^{+0}}}{-F_w-F_u}
		\geq
		-\dual{\chi_{\Omega^{+0}}}{\abs{F_w}+\abs{F_u}}
		\\ & \geq 
		-\dual{1}{\abs{F_w}+\abs{F_u}}
		\geq - A\adjoint(\abs{F_w}+\abs{F_u})
		\geq -c_0.
	\end{align*}
	Since $m(\Omega^{+0})>0$ holds, this implies
	$-\dual{1}{p}\leq m(\Omega^{+0})^{-1}c_0$,
	and combined with the estimate for $\dual{1}{p^+}$ this yields
	\begin{equation*}
		\dual{1}{\abs{p}} = 2\dual{1}{p^+} - \dual{1}{p}
		\leq 2c_0 + m(\Omega^{+0})^{-1}c_0.
	\end{equation*}
\end{proof}

\begin{lemma}
	\label{lem:another_special_case}
	Suppose that \itemref{asm:operator_A:average} holds.
	Then we have
	\begin{equation*}
		\abs{p}\leq 2 c_0
		\qquad\aeon\Omega\setminus(\Omega_w\cap\beta)
	\end{equation*}
	for all $(p,\nu,\mu,\lambda)$ that satisfy \eqref{eq:kktbeta}.
	In particular, $\abs{p}\leq 2c_0$ holds a.e.\ on $\Omega$
	if $m(\Omega_w\cap\beta)=0$.
\end{lemma}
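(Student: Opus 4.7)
My plan is to exploit the explicit form $A\adjoint v = v + \dual{1}{v}$ from \itemref{asm:operator_A:average} and to combine the three equations of \eqref{eq:kktbeta} with the sign conditions on the multipliers to bound $p$ pointwise. The first step is a global upper bound: from $F_w - p + \lambda = 0$ together with $\lambda \leq 0$ a.e.\ on $\Omega$ (which follows from $\lambda \leq 0$ on $\Omega_w$ and $\lambda = 0$ on $\Omega\setminus\Omega_w$) I obtain $p \leq F_w \leq \abs{F_w} \leq c_0$ a.e.\ on $\Omega$. Integrating yields the scalar estimate $\dual{1}{p} \leq \dual{1}{\abs{F_w}}$, and since the definition \eqref{eq:c_0} of $c_0$ contains the additive constant $\dual{1}{\abs{F_u}+\abs{F_w}+\abs{F_\xi}}$, we have $\dual{1}{p} \leq c_0(\omega)$ for a.e.\ $\omega$.

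For the lower bound on $\Omega \setminus (\Omega_w \cap \beta)$, I would use $\beta \subset \Omega^{00}$ to partition this set into the four measurable pieces $\Omega \setminus \Omega_w$, $\Omega_w \cap \Omega^{0+}$, $\Omega_w \cap \Omega^{+0}$, and $\Omega_w \cap (\Omega^{00} \setminus \beta)$, and argue on each separately. On the first, $\lambda = 0$ forces $p = F_w$, hence $\abs{p} \leq c_0$. On the second, $\nu = 0$ forces $p = F_\xi$, hence $\abs{p} \leq c_0$. On the third, $\mu = 0$ yields $A\adjoint p = -F_u$, i.e.\ $p = -F_u - \dual{1}{p}$, so that the already-established upper bound on $\dual{1}{p}$ gives $p \geq -\abs{F_u} - c_0 \geq -2c_0$. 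On the fourth, the sign condition $\mu \leq 0$ gives $A\adjoint p \geq -F_u$, i.e.\ $p \geq -F_u - \dual{1}{p} \geq -c_0 - c_0 = -2c_0$.

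Combining the global upper bound $p \leq c_0$ with the pointwise lower bounds established above yields $\abs{p} \leq 2c_0$ a.e.\ on $\Omega \setminus (\Omega_w \cap \beta)$. The ``in particular'' statement is immediate, since $m(\Omega_w \cap \beta) = 0$ makes $\Omega \setminus (\Omega_w \cap \beta)$ equal to $\Omega$ up to a null set.

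The one subtle point I anticipate is the interplay between the scalar $\dual{1}{p}$ and the function $c_0$; this is precisely what the additive $\dual{1}{\cdot}$ term built into $c_0$ is designed to accommodate, and the argument would fail without it on the two subsets where the $A\adjoint$-equation is the only available source of information, namely $\Omega_w \cap \Omega^{+0}$ and $\Omega_w \cap (\Omega^{00}\setminus\beta)$. Note also that the argument does not yield a bound on the excluded set $\Omega_w \cap \beta$, because there the three equations allow $\mu$, $\nu$, and $\lambda$ to all be non-positive simultaneously, leaving no direct handle on a lower bound for $p$; this is what motivates the more refined analysis in the subsequent lemmas.
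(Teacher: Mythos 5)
Your proof is correct and follows essentially the same route as the paper: the upper bound $p = F_w + \lambda \le F_w \le c_0$, the scalar estimate $\dual{1}{p} \le c_0$ absorbed pointwise via the $\dual{1}{\cdot}$ term in $c_0$, and then lower bounds on the complement of $\Omega_w \cap \beta$ using $\lambda = 0$, $\nu = 0$, and $\mu \le 0$ (respectively $\mu = 0$) on the relevant pieces. The paper merely groups $\Omega^{+0}$ and $\Omega^{00}\setminus\beta$ together and does not intersect with $\Omega_w$, which is an inessential difference.
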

\begin{proof}
	Again, we have
	$ p=F_w+\lambda \leq F_w \leq c_0$
	a.e.\ on $\Omega$
	and $\dual{1}{p}\leq c_0$.
	We also have the lower bounds
	\begin{equation*}
		\begin{aligned}
			p = -F_u - \mu - \dual{1}{p} \geq -F_u -c_0
			&\geq -2c_0
			\quad\aeon\Omega^{+0}\cup(\Omega^{00}\setminus\beta),
			\\
			p = F_\xi +\nu = F_\xi
			&\geq -2c_0
			\quad\aeon\Omega^{0+},
			\\
			p = F_w + \lambda = F_w
			&\geq -2c_0
			\quad\aeon\Omega\setminus\Omega_w.
		\end{aligned}
	\end{equation*}
	The result then follows from
	$\Omega^{+0}\cup(\Omega^{00}\setminus\beta)\cup\Omega^{0+}
	\cup(\Omega\setminus\Omega_w)
	=\Omega\setminus(\Omega_w\cap\beta)$.
\end{proof}

\begin{lemma}
	\label{lem:l2_pointwise_bound}
	Suppose that \itemref{asm:operator_A:average} holds.
	Additionally, we assume $m(\Omega^{+0})>0$ and
	$m(\Omega_w\cap\beta)>0$.
	Let $p_0,\mu_0,\nu_0,\lambda_0$ be multipliers that satisfy \eqref{eq:kktbeta}.
	Then one can find multipliers $p_1,\mu_1,\nu_1,\lambda_1$
	satisfying \eqref{eq:kktbeta} such that
	\begin{equation*}
		\abs{p_1}
		\leq
		\paren[\Big]{
			2+m(\Omega_w \cap \beta)^{-1} 
		\paren[\big]{2+m(\Omega^{+0})^{-1}}}c_0
		\qquad\aeon\Omega
		.
	\end{equation*}
\end{lemma}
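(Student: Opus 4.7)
The plan is to localize the correction: by \cref{lem:another_special_case} we already have $\abs{p_0}\le 2c_0$ a.e.\ on $\Omega\setminus(\Omega_w\cap\beta)$, so it suffices to modify the given multipliers only on $\Omega_w\cap\beta$. On this set the sign condition on $\mu$ is vacuous (as $\mu$ is unrestricted on $\beta$), which leaves enough flexibility to adjust $p$; on the other hand, in order to preserve the rigid equalities $\mu=0$ on $\Omega^{+0}$, $\lambda=0$ on $\Omega\setminus\Omega_w$ and $\nu=0$ on $\Omega^{0+}$, the modification must be supported in $\Omega_w\cap\beta$ with vanishing integral (so that $\dual{1}{p}$ does not change). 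The quantitative input that makes this possible is the $L^1$-type bound $\dual{1}{\abs{p_0}}\le(2+m(\Omega^{+0})^{-1})c_0$ from \cref{lem:upper_bound_l1}.

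Concretely, I would introduce
\[
	r := (-p_0-2c_0)^+\,\chi_{\Omega_w\cap\beta},
	\qquad
	c := \dual{1}{r}/m(\Omega_w\cap\beta)\ge 0,
	\qquad
	p_1 := p_0 + r - c\,\chi_{\Omega_w\cap\beta},
\]
and derive $\mu_1,\lambda_1,\nu_1$ from the three equations of \eqref{eq:kktbeta}. Since $p_1-p_0$ is supported in $\Omega_w\cap\beta$ and has integral zero, one has $\dual{1}{p_1}=\dual{1}{p_0}$, and hence $\mu_1=\mu_0$, $\lambda_1=\lambda_0$, $\nu_1=\nu_0$ pointwise outside $\Omega_w\cap\beta$; all KKT conditions required there are inherited from $(p_0,\mu_0,\nu_0,\lambda_0)$. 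Inside $\Omega_w\cap\beta$ only $\lambda_1\le 0$ and $\nu_1\le 0$ remain, and I would check them by the case distinction $p_0\ge -2c_0$ (where $r=0$ and $p_1=p_0-c\le p_0\le\min(F_w,F_\xi)$) versus $p_0<-2c_0$ (where $p_1=-2c_0-c\le -c_0\le\min(F_w,F_\xi)$, using the elementary bounds $\abs{F_w},\abs{F_\xi}\le c_0$ that follow from \eqref{eq:c_0}).

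For the pointwise bound, outside $\Omega_w\cap\beta$ we have $\abs{p_1}=\abs{p_0}\le 2c_0$ by \cref{lem:another_special_case}, while the same case distinction gives $\abs{p_1}\le 2c_0+c$ on $\Omega_w\cap\beta$. Finally, \cref{lem:upper_bound_l1} combined with the elementary estimate $\dual{1}{r}\le\dual{1}{\abs{p_0}}$ yields
\[
	c \;\le\; \frac{2+m(\Omega^{+0})^{-1}}{m(\Omega_w\cap\beta)}\,c_0
	\quad\aeon\Omega,
\]
which, added to $2c_0$, reproduces exactly the stated bound. The main difficulty is to identify a correction that simultaneously introduces a pointwise lower bound on $p$ on $\Omega_w\cap\beta$ and remains compatible with the rigid equality conditions on $\Omega\setminus(\Omega_w\cap\beta)$; the single scalar compensation $c\,\chi_{\Omega_w\cap\beta}$ does the job precisely because $m(\Omega_w\cap\beta)>0$, which is where the assumption of the \lcnamecref{lem:l2_pointwise_bound} enters.
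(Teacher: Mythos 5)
Your proposal is correct and follows essentially the same route as the paper's proof: a modification of $p_0$ supported on $\Omega_w\cap\beta$ with vanishing integral (so that $\dual{1}{p}$, and hence $\mu$ on $\Omega^{+0}\cup(\Omega^{00}\setminus\beta)$, is untouched), with the constant compensation bounded via \cref{lem:upper_bound_l1} and the bound off $\Omega_w\cap\beta$ taken from \cref{lem:another_special_case}. The only (immaterial) difference is that you truncate $p_0$ at the explicit threshold $-2c_0$, whereas the paper lifts by $\chi_{\Omega_w\cap\beta}\max(p_0,\lambda_0,\nu_0)^-$ before adding the constant correction.
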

\begin{proof}
	We define the function
	\begin{equation*}
		a_0 := \chi_{\Omega_w \cap \beta}\max(p_0,\lambda_0,\nu_0)^- \ge 0
	\end{equation*}
	and subsequently define
	\begin{equation*}
		p_2 := p_0 + a_0,
		\quad
		\lambda_2 := \lambda_0 + a_0,
		\quad
		\nu_2 := \nu_0 + a_0.
	\end{equation*}
	We observe that the conditions
	\begin{align*}
		F_w-p_2+\lambda_2 &= 0,
		&
		F_\xi - p_2+\nu_2 &= 0,
		\\
		\nu_2 &\leq 0\quad\aeon\beta,
		&
		\lambda_2 &\leq0 \quad\aeon\Omega
	\end{align*}
	still hold for these new multipliers.
	Next, we want to find a bound on $\abs{p_2}$.
	We already have the bound $\abs{p_2}=\abs{p_0}\leq 2c_0$
	a.e.\ on $\Omega\setminus(\Omega_w\cap\beta)$
	due to \cref{lem:another_special_case}.
	Again, $p_2 = F_w + \lambda_2 \leq F_2\leq c_0$
	yields an upper bound.
	We also have the lower bounds
	\begin{equation*}
		\begin{aligned}
			p_2 &\geq 0 
			\quad&&\aeon\set{p_2\geq0}\cap\Omega_w\cap\beta,
			\\
			p_2 = F_w+\lambda_2 &= F_w
			\quad&&\aeon\set{\lambda_2=0}\cap\Omega_w\cap\beta,
			\\
			p_2 = \nu_2 + F_\xi &\geq F_\xi
			\quad&&\aeon\set{\nu_2\geq0}\cap\Omega_w\cap\beta.
		\end{aligned}
	\end{equation*}
	Using the definition of $a_0$, one can show that
	$\Omega_w\cap\beta\subset 
	\set{p_2\geq0}\cup\set{\lambda_2=0}\cup\set{\nu_2\geq0}$
	holds,
	which leads to the lower bound
	$p_2\geq -c_0$ a.e.\ on $\Omega_w\cap\beta$.
	In summary, we have
	\begin{equation*}
		\abs{p_2}\leq 2 c_0 \quad\aeon\Omega.
	\end{equation*}
	In general, $\mu_2:=\mu_0-A\adjoint a_0$
	does not satisfy $\mu_2=0$ a.e.\ on $\Omega^{+0}$.
	Thus, we need to make further modifications.
	For this purpose, we first define
	\begin{equation*}
		a_2 := -m(\Omega_w \cap \beta)^{-1}\dual{1}{a_0}\chi_{\Omega_w \cap \beta}\leq 0.
	\end{equation*}
	Then we have $ \dual{1}{a_2} = - \dual{1}{a_0}$
	and
	\begin{equation*}
		\begin{aligned}
			\abs{a_2} &= -a_2 \leq
			m(\Omega_w \cap \beta)^{-1}
			\dual{1}{a_0}
			\leq m(\Omega_w \cap \beta)^{-1}\dual{1}{p_0^-}
			\\
			&\leq m(\Omega_w \cap \beta)^{-1}
			\paren[\big]{2+m(\Omega^{+0})^{-1}}c_0,
		\end{aligned}
	\end{equation*}
	where we used \cref{lem:upper_bound_l1} in the last inequality.
	Finally, we define
	\begin{equation*}
		p_1 := p_2 +a_2,
		\quad
		\nu_1 := \nu_2 +a_2,
		\quad
		\lambda_1 := \lambda_2+a_2,
		\quad
		\mu_1 := \mu_0 - A\adjoint a_0 - A\adjoint a_2.
	\end{equation*}
	We claim that these functions satisfy \eqref{eq:kktbeta}.
	That the three equations hold can be seen directly.
	For $\mu_1$ we observe that
	\begin{equation*}
		\mu_1 = \mu_0 - a_0 - a_2 - \dual{1}{a_0 + a_2}
		= \mu_0-a_0-a_2
	\end{equation*}
	holds.
	In particular, we have $\mu_1=\mu_0$ a.e.\ on 
	$\Omega^{+0}\cup(\Omega^{00}\setminus\beta)$
	and thus the conditions for $\mu_1$ in \eqref{eq:kktbeta}
	are satisfied.
	For $\nu_1$, we observe $\nu_1 = \nu_2+a_2 \leq \nu_2\leq 0$
	a.e.\ on $\beta$,
	and because of $a_0 = a_2 = 0$ a.e.\ on $\Omega^{0+}$,
	we also have $\nu_1=0$ a.e.\ on $\Omega^{0+}$.
	Similarly, for $\lambda_1$, we observe 
	$\lambda_1 = \lambda_2+a_2 \leq \lambda_2\leq 0$
	a.e.\ on $\Omega_w$,
	and because of $a_0 = a_2 = 0$ a.e.\ on $\Omega\setminus\Omega_w$
	we also have $\lambda_1=0$ a.e.\ on $\Omega\setminus\Omega_w$.

	To summarize, the multipliers $p_1,\mu_1,\nu_1,\lambda_1$ satisfy
	\eqref{eq:kktbeta}.
	The estimate for $p_1$ follows by combining the estimates
	for $p_2$ and $a_2$.
\end{proof}
Finally, we can make an analogous statement to
\cref{lem:upper_bound_special_case}, but for the case of
\itemref{asm:operator_A:average}.
A notable difference is that the bound depends on $\beta$.
\begin{lemma}
	\label{lem:pointwise_bounds_summary}
	Suppose that \itemref{asm:operator_A:average} holds.
	Let $p_0,\mu_0,\nu_0,\lambda_0$ be multipliers that satisfy \eqref{eq:kktbeta}.
	Then there exist multipliers
	$p_1,\mu_1,\nu_1,\lambda_1$ that satisfy \eqref{eq:kktbeta}
	such that the pointwise a.e.\ inequality
	\begin{equation*}
		\max\paren[\big]{\abs{p_1},\abs{\nu_1},\abs{\lambda_1},\abs{\mu_1}}
		\leq C_\beta c_0
	\end{equation*}
	holds, where the constant $C_\beta >0$ 
	can be chosen as
	$C_\beta = 2$ if $m(\Omega^{+0})=0$,
	$C_\beta = 5 + 2m(\Omega)$ if $m(\Omega_w\cap\beta)=0$,
	and
	$C_\beta =
	1+\paren[\big]{2+m(\Omega_w \cap \beta)^{-1} 
	\paren[\big]{2+m(\Omega^{+0})^{-1}}}(2+m(\Omega)) $
	if $m(\Omega^{+0})>0$ and $m(\Omega_w\cap\beta)>0$.
\end{lemma}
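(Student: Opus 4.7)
The plan is a three-way case analysis exactly matching the one in the statement, depending on whether $m(\Omega^{+0})$ and $m(\Omega_w\cap\beta)$ are zero or positive. In each case the strategy is the same: first invoke one of the preceding lemmas to obtain (possibly modified) multipliers with a pointwise bound on $\abs{p_1}$; then read off bounds on $\lambda_1 = p_1 - F_w$ and $\nu_1 = p_1 - F_\xi$ from the KKT equations of \eqref{eq:kktbeta}; and finally bound $\mu_1 = -F_u - A\adjoint p_1 = -F_u - p_1 - \dual{1}{p_1}$. The one non-trivial ingredient is that the scalar $\dual{1}{p_1}$ must be controlled \emph{pointwise} by a multiple of $c_0$. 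For this I would use the auxiliary estimate
\begin{equation*}
	\dual{1}{c_0} \leq (2+m(\Omega))\,c_0
	\qquad\aeon\Omega,
\end{equation*}
which is immediate from \eqref{eq:c_0}: writing $S := \abs{F_u}+\abs{F_w}+\abs{F_\xi}$ and $I := \dual{1}{S}$, one has $c_0 = 2S + I \geq I$ pointwise, while $\dual{1}{c_0} = (2+m(\Omega))I$.

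In the first case $m(\Omega^{+0}) = 0$, the key observation is that $A$ from \itemref{asm:operator_A:average} automatically satisfies the nonnegativity property in \itemref{asm:operator_A:nonneg}, so the hypotheses of \cref{lem:upper_bound_special_case} are met and its conclusion yields $C_\beta = 2$ directly. In the second case $m(\Omega_w\cap\beta) = 0$, I would simply take $(p_1,\nu_1,\lambda_1,\mu_1) := (p_0,\nu_0,\lambda_0,\mu_0)$: then \cref{lem:another_special_case} delivers $\abs{p_1} \leq 2c_0$ a.e.\ on all of $\Omega$. The KKT equations yield $\abs{\lambda_1}, \abs{\nu_1} \leq 3 c_0$ at once, and the auxiliary inequality gives $\abs{\dual{1}{p_1}} \leq 2\dual{1}{c_0} \leq 2(2+m(\Omega))c_0$, from which a bound for $\abs{\mu_1}$ of the announced form follows.

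The third case is structurally identical but uses \cref{lem:l2_pointwise_bound} in place of \cref{lem:another_special_case}: that lemma produces modified multipliers with the stated pointwise bound on $\abs{p_1}$, namely $\abs{p_1} \leq K c_0$ with $K := 2 + m(\Omega_w\cap\beta)^{-1}(2+m(\Omega^{+0})^{-1})$, after which the remaining multipliers are bounded exactly as in the second case, ultimately yielding the announced $C_\beta$. The main obstacle is really only the bookkeeping for $\mu_1$: since $A\adjoint p_1 = p_1 + \dual{1}{p_1}$ and $\dual{1}{p_1}$ is a priori only a scalar, its pointwise control against $c_0$ is the one genuinely new ingredient, and this is precisely what the auxiliary estimate on $\dual{1}{c_0}$ provides.
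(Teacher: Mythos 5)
Your proposal follows essentially the same route as the paper's proof: the identical three-case split, invoking \cref{lem:upper_bound_special_case}, \cref{lem:another_special_case} and \cref{lem:l2_pointwise_bound}, respectively, for the pointwise bound on $\abs{p_1}$, then reading off $\abs{\nu_1},\abs{\lambda_1}$ from the equations of \eqref{eq:kktbeta} and bounding $\mu_1=-F_u-p_1-\dual{1}{p_1}$ via the pointwise estimate $\dual{1}{c_0}\le(2+m(\Omega))c_0$, which you moreover prove while the paper only asserts it. The only minor point is that this crude bookkeeping (exactly as in the paper's own displayed chain) actually yields constants of the form $1+\hat C_\beta(3+m(\Omega))$ rather than the announced $1+\hat C_\beta(2+m(\Omega))$, e.g.\ $7+2m(\Omega)$ instead of $5+2m(\Omega)$ in the second case; this discrepancy is shared with the paper and is immaterial for every later use of the lemma, since only the dependence of $C_\beta$ on $m(\Omega)$, $m(\Omega^{+0})$ and $m(\Omega_w\cap\beta)$ matters.
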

\begin{proof}
	If $m(\Omega^{+0})=0$, then the claim follows from
	\cref{lem:upper_bound_special_case}.
	If $m(\Omega_w\cap\beta)=0$,
	then we choose 
	$(p_1,\mu_1,\nu_1,\lambda_1):= (p_0,\mu_0,\nu_0,\lambda_0)$,
	and \cref{lem:another_special_case} yields a pointwise a.e.\ 
	estimate for $\abs{p_1}$.
	In the remaining case of $m(\Omega^{+0})>0$
	and $m(\Omega_w\cap\beta)>0$ we apply \cref{lem:l2_pointwise_bound},
	which also yields an estimate for $\abs{p_1}$.
	In any case, we have an estimate of the form
	$\abs{p_1}\leq \hat C_\beta c_0$ a.e.\ in $\Omega$,
	where $\hat C_\beta:= 2+m(\Omega_w \cap \beta)^{-1} 
	\paren[\big]{2+m(\Omega^{+0})^{-1}}$
	if $m(\Omega_w\cap\beta)>0$ or
	$\hat C_\beta := 2$ if $m(\Omega_w\cap\beta)=0$.
	From \eqref{eq:kktbeta}, we can obtain the estimates
	$\max(\abs{\nu_1},\abs{\lambda_1})\leq (\hat C_\beta+1)c_0$
	a.e.\ in $\Omega$.
	Finally, we have
	\begin{equation*}
		\begin{aligned}
			\abs{\mu_1}
			&= \abs{-F_u-A\adjoint p_1}
			\leq \abs{F_u}+\abs{A\adjoint p_1}
			\leq c_0 + \abs{p_1} + \abs{\dual{1}{p_1}}
			\\ &\leq 
			(1+\hat C_\beta) c_0 + \dual{1}{\abs{p_1}}
			\leq c_0 + \hat C_\beta c_0+ \hat C_\beta \dual{1}{c_0})
			\leq c_0 + \hat C_\beta c_0 + \hat C_\beta(2+m(\Omega)) c_0
			\\ & = 
			(1+\hat C_\beta(2+m(\Omega))) c_0,
		\end{aligned}
	\end{equation*}
	where the estimate $\dual{1}{c_0}\leq (2+m(\Omega))c_0$
	that was used in the last inequality
	can be obtained from the definition of $c_0$.
\end{proof}

These \lcnamecrefs{lem:upper_bound_special_case}
enable us to verify the existence of KKT multipliers
if $(0,0,0)$ is a minimizer of \eqref{eq:lpbeta}.
\begin{proposition}
	\label{prop:kkt_lp_beta}
	Suppose that \itemref{asm:operator_A:nonneg} 
	or \itemref{asm:operator_A:average}
	holds.
	Further suppose that $(0,0,0)$ is a
	minimizer of \eqref{eq:lpbeta}.
	Then there exist multipliers
	$(p,\mu,\nu,\lambda)$ that satisfy \eqref{eq:kktbeta}.
\end{proposition}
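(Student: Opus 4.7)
The plan is to introduce the convex cone of data for which \eqref{eq:kktbeta} is solvable,
\begin{equation*}
	\mathcal{M} := \set{(F_u,F_w,F_\xi)\in L^2(\Omega)^3 \given \exists\,(p,\mu,\nu,\lambda)\in L^2(\Omega)^4 \text{ satisfying } \eqref{eq:kktbeta}},
\end{equation*}
and to prove (i) that $\mathcal{M}$ is closed in $L^2(\Omega)^3$ and (ii) that $\mathcal{M}$ coincides with the cone $\mathcal{L}$ of those $F$ for which $(0,0,0)$ minimizes \eqref{eq:lpbeta}. The proposition is exactly the inclusion $\mathcal{L}\subseteq\mathcal{M}$.

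For step (i), let $F^n\to F$ in $L^2(\Omega)^3$ with $F^n\in\mathcal{M}$. In case \itemref{asm:operator_A:nonneg}, \cref{lem:upper_bound_special_case} supplies multipliers $(p^n,\mu^n,\nu^n,\lambda^n)$ for $F^n$ satisfying a pointwise a.e.\ bound $\leq 2c_0^n$; in case \itemref{asm:operator_A:average}, \cref{lem:pointwise_bounds_summary} does the same with bound $\leq C_\beta c_0^n$, where $C_\beta$ depends only on $\beta$ (not on $n$) and $c_0^n$ is formed from $F^n$ via \eqref{eq:c_0}. Since $\norm{F^n}_{L^2(\Omega)^3}$ is bounded and $A\adjoint$ is continuous, $\norm{c_0^n}_{L^2(\Omega)}$ is bounded, so the multiplier sequences are bounded uniformly in $L^2(\Omega)$. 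Extracting weakly convergent subsequences and passing to the weak limit in the linear equations and the pointwise sign constraints of \eqref{eq:kktbeta}---all of which define weakly sequentially closed sets---produces multipliers for $F$, hence $F\in\mathcal{M}$.

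For step (ii), let $K\subset L^2(\Omega)^3$ denote the closed convex cone of triples satisfying the pointwise sign/equality constraints of \eqref{eq:lpbeta}, and define $B\colon L^2(\Omega)^3\to L^2(\Omega)$ by $B(u,w,\xi) := Au-w-\xi$. Since $B$ is surjective (take $u=w=0$, $\xi=-b$), the closed range theorem yields $\operatorname{range}(B\adjoint) = (\ker B)^\perp$. Direct inspection of \eqref{eq:kktbeta} identifies $\mathcal{M} = \operatorname{range}(B\adjoint) - K^\circ$, where $K^\circ$ is the (negative) polar cone. On the other hand, $(0,0,0)$ minimizes the linear problem \eqref{eq:lpbeta} exactly when $-F\in(K\cap\ker B)^\circ$, so $\mathcal{L} = -(K\cap\ker B)^\circ$. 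The polar-of-intersection identity for closed convex cones in a Hilbert space then gives
\begin{equation*}
	(K\cap\ker B)^\circ \;=\; \overline{K^\circ + (\ker B)^\perp} \;=\; \overline{K^\circ + \operatorname{range}(B\adjoint)},
\end{equation*}
so $\mathcal{L} = \overline{\mathcal{M}}$. Combined with the closedness from step (i), this gives $\mathcal{L} = \mathcal{M}$.

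The main obstacle is the uniform pointwise bound in step (i); it is precisely this bound, furnished by the preceding chain of lemmas, that promotes the generally merely dense inclusion $\mathcal{M}\subset\mathcal{L}$ to equality. Such bounds can genuinely fail outside \cref{asm:operator_A}, as witnessed by \cref{ex:no_fcq}, and without them the weak-compactness argument closing $\mathcal{M}$ breaks down. In contrast, the Hilbert-space duality in step (ii) is essentially formal and is insensitive to the specific alternative of \cref{asm:operator_A}.
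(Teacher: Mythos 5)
Your proposal is correct and follows essentially the same route as the paper: both reduce the claim to the closedness of the cone of data admitting KKT multipliers (equivalently, of $T\adjoint L^2(\Omega)+P\polar$) and prove that closedness via the uniform pointwise bounds from \cref{lem:upper_bound_special_case,lem:pointwise_bounds_summary} combined with weak compactness and the weak closedness of the sign constraints. The only cosmetic difference is that you derive the duality step (the optimality condition coincides with the KKT condition once this cone is closed) explicitly from the closed-range theorem and the bipolar identity for closed convex cones, whereas the paper cites \cite[Lemma~2.4.1]{Schirotzek2007} for it.
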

\begin{proof}
	We introduce the notation
	$T \colon L^2(\Omega)^3 \to L^2(\Omega)$,
	$T (u, w, \xi) := A u - w - \xi$,
	\begin{equation*}
		P := \set*{
			(u, w, \xi) \in L^2(\Omega)
			\given
			\begin{aligned}
				u   &\geq0 \;\aeon \Omega^{00}\setminus\beta,
				&
				u   &= 0   \;\aeon\Omega^{0+}\cup\beta,
				\\
				w   &\geq0 \;\aeon \Omega_w,
				\\
				\xi &\geq0 \;\aeon \beta,
				&
				\xi &= 0   \;\aeon\Omega^{+0}\cup (\Omega^{00}\setminus\beta)
			\end{aligned}
		}
	\end{equation*}
	and
	$F := (F_u, F_w, F_\xi) \in L^2(\Omega)^3$.
	Then,
	\eqref{eq:lpbeta}
	reads
	\begin{equation*}
		\text{Minimize} \quad \dual{F}{x} \quad
		\text{such that} \quad T x = 0,\; x \in P
		.
	\end{equation*}
	The optimality of
	$x = (0,0,0)$
	yields
	\begin{equation*}
		-F \in ( T^{-1} (\set{0}) \cap P )\polar,
	\end{equation*}
	while
	\eqref{eq:kktbeta}
	reads
	\begin{equation*}
		-F \in T\adjoint L^2(\Omega) + P\polar.
	\end{equation*}
	Thus, it remains to show that the right-hand sides are equal
	and this holds
	if (and only if)
	the set
	\begin{equation*}
		B
		:=
		T\adjoint L^2(\Omega) + P\polar
		=
		\set*{
			\begin{pmatrix}
				A\adjoint p + \mu
				\\
				-p + \lambda
				\\
				-p+\nu
			\end{pmatrix}
			\given
			\begin{aligned}
				\mu&\leq0 \;\aeon\Omega^{00}\setminus\beta,
				&
				\mu&=0 \;\aeon\Omega^{+0},
				\\
				\lambda &\leq 0 \;\aeon\Omega_w,
				&
				\lambda &= 0 \;\aeon\Omega\setminus\Omega_w,
				\\
				\nu&\leq0 \;\aeon\beta,
				&
				\nu&=0 \;\aeon\Omega^{0+}
			\end{aligned}
		}
	\end{equation*}
	is closed, see \cite[Lemma~2.4.1]{Schirotzek2007}.

	Let $\set{F_k}_k = \set{(F_{u,k}, F_{w,k}, F_{\xi,k})}_k\subset -B\subset L^2(\Omega)^3$ be a sequence
	with $F_k\to F_0$ in $L^2(\Omega)^3$.
	Hence,
	there exists a sequence $\set{(p_k, \lambda_k, \nu_k, \mu_k)}_k \subset L^2(\Omega) \times P\polar$
	such that
	$(p_k, \lambda_k, \nu_k, \mu_k)$
	satisfies
	\eqref{eq:kktbeta}
	with $(F_u, F_w, F_\xi)$ replaced by
	$(F_{u,k}, F_{w,k}, F_{\xi,k})$.
	Due to \cref{asm:operator_A},
	we can apply \cref{lem:upper_bound_special_case} or \cref{lem:pointwise_bounds_summary}
	to
	obtain
	$(\hat p_k, \hat \lambda_k, \hat \nu_k, \hat \mu_k) \in L^2(\Omega) \times P\polar$
	with
	\begin{equation}
		\label{eq:with_hats}
		F_k + T\adjoint \hat p_k + (\hat \lambda_k, \hat \nu_k, \hat \mu_k) = 0
	\end{equation}
	and
	\(
		\max\paren[\big]{\abs{\hat p_k},\abs{\hat \nu_k},\abs{\hat \lambda_k},\abs{\hat \mu_k}}
		\leq C_\beta c_{0,k},
	\)
	where $c_{0,k}$ is defined analogously to \eqref{eq:c_0}.
	Note that $c_{0,k}$ is bounded in $L^2(\Omega)$
	due to the boundedness of $\set{F_k}_k$ in $L^2(\Omega)^3$.
	Therefore,
	the sequence
	$\set{(\hat p_k, \hat \lambda_k, \hat \nu_k, \hat \mu_k)}_k$
	has
	a weakly convergent subsequence.
	By passing to the limit (along this subsequence) in \eqref{eq:with_hats},
	it follows that $F_0\in -B$ due to the weak closeness of $P\polar$.
	Thus, $B$ is closed
	and this yields the claim.
\end{proof}

Finally,
we give an example to show 
that KKT multipliers for the local minimizer $(0,0,0)$ of
\eqref{eq:lpbeta} may fail to exist.
\begin{example}
	\label{ex:no_fcq}
	We consider the case that $\Omega=(-1,1)$
	and
	$S := (-\Delta_0)^{-1}  \colon  L^2(\Omega) \to L^2(\Omega)$, where $\Delta_0$ is the 
	Laplacian equipped with homogeneous Dirichlet boundary conditions.
	We further choose
	\begin{align*}
		A &:= \id + S\adjoint S  \colon  L^2(\Omega)\to L^2(\Omega),
		\\
		\Omega_1 &:= \bigcup_{k\in\N} [2^{-2k-1},2^{-2k}],
		\\
		\Omega_2 &:= \Omega\setminus\Omega_1
		= (-1,0]\cup\bigcup_{k\in\N} (2^{-2k},2^{-2k+1}) \cup [1/2,1),
		\\
		\Omega_w &:= \Omega_1,
		\quad
		\Omega^{+0} := \Omega_2,
		\quad
		\Omega^{00} := \beta := \Omega_1,
		\quad
		\Omega^{0+} := \emptyset,
		\\
		F_w &:= F_\xi := 0,
		\qquad
		F_u(\omega) :=
		\chi_{\Omega_2} (\tfrac16 \abs{\omega}^3-\tfrac12\omega^2 + \tfrac13)
		\qquad\forall \omega\in\Omega.
	\end{align*}
	Then $(0,0,0)$ is a (global) minimizer of \eqref{eq:lpbeta},
	but no multipliers exist such that \eqref{eq:kktbeta} is satisfied.

	Moreover, the set of functions $F_u,F_w,F_\xi$ for which
	multipliers exist such that \eqref{eq:kktbeta} is satisfied,
	is nonempty and not closed.
\end{example}
\begin{proof}
	We start with showing that the set of functions $F_u,F_w,F_\xi$
	for which multipliers exist
	such that \eqref{eq:kktbeta} is satisfied is nonempty and not closed.
	For $k\in\N$, we define
	\begin{align*}
		p_k &:= -2^{2k+2}\chi_{(2^{-2k-1},2^{-2k})},
		\\
		\lambda_k &:= \nu_k := p_k,
		\\
		\mu_k &:= \chi_{\Omega_1}(-A\adjoint p_k),
		\\
		F_u^k &:= \chi_{\Omega_2}(-A\adjoint p_k).
	\end{align*}
	Then one can check that \eqref{eq:kktbeta} is satisfied
	(by using $p_k,\lambda_k,\nu_k,\mu_k,F_u^k$ instead of 
	$p,\lambda,\nu,\mu,F_u$).
	One can also calculate $Sp_k$ and confirm that it satisfies
	\begin{equation*}
		v_k(\omega):=(Sp_k)(\omega) =
		\begin{cases}
			c_k(\omega+1)
			&\text{if}\;
			\omega\in (-1,2^{-2k-1}],
			\\
			c_k+2^{-2k-1} + (c_k-2)\omega
			+ 2^{2k+1}\omega^2
			&\text{if}\;
			\omega\in (2^{-2k-1},2^{-2k}),
			\\
			(c_k+2)(\omega - 1)
			&\text{if}\;
			\omega\in [2^{-2k},1),
		\end{cases}
	\end{equation*}
	where $c_k = -1 + 3\cdot 2^{-2k-2}$.
	We also observe that $v_k$ converges in $L^2(\Omega)$
	with limit $v_0(\omega):=\abs{\omega}-1$,
	and that $(S\adjoint v_0)(\omega)=(Sv_0)(\omega)
	=-(\tfrac16 \abs{\omega}^3-\tfrac12\omega^2 + \tfrac13)$ holds.
	Thus, we have the convergence
	\begin{equation*}
		\begin{aligned}
			F_u^k &= \chi_{\Omega_2}(-A\adjoint p_k)
			= \chi_{\Omega_2}(-p_k-S\adjoint S p_k)
			= \chi_{\Omega_2}(-S\adjoint S p_k)
			= \chi_{\Omega_2}(-S\adjoint v_k)
			\\
			&\to \chi_{\Omega_2}(-S\adjoint v_0)
			= F_u
			\qquad \text{as }k\to\infty
		\end{aligned}
	\end{equation*}
	in $L^2(\Omega)$.
	Suppose that $p,\mu,\nu,\lambda\in L^2(\Omega)$ satisfy
	\eqref{eq:kktbeta}.
	Then we have $p=\lambda=\nu$,
	$p=0$ a.e.\ on $\Omega_2$,
	and $F_u + A\adjoint p = -\mu = 0$ a.e.\ on $\Omega_2$.
	Thus,
	\begin{equation*}
		0 = F_u + A\adjoint p
		= -S\adjoint v_0 + S \adjoint S p
		= S\adjoint(Sp-v_0)
		\qquad\aeon\Omega_2.
	\end{equation*}
	Since $S p$ and $v_0$ are continuous, taking second derivatives
	in $\Omega_2$ in the above equation yields
	$S p = v_0$ on $\Omega_2$.
	We also have equality of derivatives of $S p$ and $v_0$
	in the interior of $\Omega_2$.
	This shows that the derivative of $S p$ is discontinuous at $0$,
	which is a contradiction to its regularity $(S p)' \in H^1(\Omega) \embeds C(\bar\Omega)$.
	Thus, there exist no multipliers that satisfy
	\eqref{eq:kktbeta} with our choices for $F_u$, $F_w$, $F_\xi$.
	Due to $F_u^k\to F_u$, the set of functions
	$F_u,F_w,F_\xi$ for which multipliers exist is nonempty, but not closed.

	It remains to show that $(0,0,0)$ is a minimizer of \eqref{eq:lpbeta}.
	Let $(u,w,\xi)\in L^2(\Omega)^3$ be another feasible point of
	\eqref{eq:lpbeta}.
	Note that $u=0$ a.e.\ on $\Omega_1$
	and $Au = w + \xi \geq0$ a.e.\ on $\Omega_1$.
	For the objective function we have
	\begin{equation*}
		\begin{aligned}
			\dual{F_u}{u} + \dual{F_w}{w} + \dual{F_\xi}{\xi}
			&= \dual{F_u}{u}
			= \lim_{k\to\infty} \dual{F_u^k}{u}
			= \lim_{k\to\infty} \dual{\chi_{\Omega_2}(-A\adjoint p_k)}{u}
			\\
			&= \lim_{k\to\infty} \dual{- p_k}{A \chi_{\Omega_2}u}
			= \lim_{k\to\infty} \dual{- \chi_{\Omega_1}p_k}{\chi_{\Omega_1}A u}
			\\
			& \geq0
			=\dual{F_u}0 + \dual{F_w}0 + \dual{F_\xi}0
		\end{aligned}
	\end{equation*}
	where we used $-p_k\geq0$ and $\chi_{\Omega_1}A u\geq0$
	a.e.\ on $\Omega$ in the last step.
	Since $(u,w,\xi)\in L^2(\Omega)^3$ was an arbitrary feasible point
	of \eqref{eq:lpbeta}, $(0,0,0)$ is a (global) minimizer of this
	optimization problem.
\end{proof}

\subsection{M-stationarity}

We combine the results of
\cref{sec:a_to_m} with those of \cref{sec:astat} to show that
$(0,0,0)$ is M-stationary if it is a minimizer of \eqref{eq:mpcc2}.
Using \cref{def:stat},
the system of M-stationarity for \eqref{eq:mpcc2} at $(0,0,0)$ is
\begin{equation}
	\label{eq:lin_mstat}
	\begin{aligned}
		F_u &+ A\adjoint \bar p + \bar\mu = 0
		&
		F_w &- \bar p + \bar\lambda = 0
		&
		F_\xi &- \bar p + \bar\nu = 0,
		\\
		\bar\lambda &\leq 0 \quad\aeon\Omega_w,
		&
		\bar\lambda &= 0 \quad\aeon\Omega\setminus\Omega_w,
		\\
		\bar\mu&=0 \quad\aeon\Omega^{+0},
		&
		\bar\nu&=0 \quad\aeon\Omega^{0+},
		\\&&
		\mrep[r]{(\bar\mu<0\land\bar\nu<0)\lor\bar\mu\bar\nu}{}&=0
		\quad\aeon\Omega^{00},
	\end{aligned}
\end{equation}
where $\bar p,\bar\nu,\bar\lambda,\bar\mu\in L^2(\Omega)$ are the multipliers.
First, we check
that M-stationarity holds under \itemref{asm:operator_A:nonneg}
for a local minimizer $(0,0,0)$.
\begin{theorem}
	\label{thm:linear_mstat_a}
	Suppose that \itemref{asm:operator_A:nonneg} holds.
	If $(0,0,0)$ is a (global)
	minimizer of \eqref{eq:mpcc2},
	then it is an M-stationary point.
\end{theorem}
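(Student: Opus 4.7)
The plan is to combine the A$_\beta$-stationarity provided by \cref{prop:kkt_lp_beta} with the convex-combination machinery of \cref{thm:schinabeck}. Under \itemref{asm:operator_A:nonneg}, $m(\Omega^{+0})=0$, so the target system \eqref{eq:lin_mstat} differs from \eqref{eq:kktbeta} only in that the $\beta$-dependent sign conditions on $\bar\mu,\bar\nu$ over $\Omega^{00}$ are replaced by the M-condition.

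First I would note that for every measurable $\beta\subset\Omega^{00}$ the feasible set of \eqref{eq:lpbeta} is contained in that of \eqref{eq:mpcc2} and both contain the origin, so $(0,0,0)$ is a global minimizer of \eqref{eq:lpbeta}. \Cref{prop:kkt_lp_beta} yields multipliers $(p^\beta,\mu^\beta,\nu^\beta,\lambda^\beta)$ satisfying \eqref{eq:kktbeta}, and \cref{lem:upper_bound_special_case} lets me choose them with the uniform pointwise estimate $\max(\abs{p^\beta},\abs{\mu^\beta},\abs{\nu^\beta},\abs{\lambda^\beta})\le 2c_0$ a.e.\ on $\Omega$, where $c_0\in L^2(\Omega)$ is the function from \eqref{eq:c_0} and is independent of $\beta$.

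Next I would set $B:=\clconv\set{(p^\beta,\mu^\beta,\nu^\beta,\lambda^\beta)\given\beta\subset\Omega^{00}\text{ measurable}}\subset L^2(\Omega)^4$. The three linear equations in \eqref{eq:kktbeta} together with the $\beta$-independent sign conditions ($\mu=0$ a.e.\ on $\Omega^{+0}$, $\nu=0$ a.e.\ on $\Omega^{0+}$, and $\lambda\le 0$ a.e.\ on $\Omega_w$, $\lambda=0$ a.e.\ on $\Omega\setminus\Omega_w$) cut out a convex and strongly closed subset of $L^2(\Omega)^4$, so every element of $B$ inherits them. Moreover, the uniform bound makes $B$ bounded, hence weakly compact. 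I would then apply \cref{thm:schinabeck} on the measure space $(\Omega^{00},\AA|_{\Omega^{00}},m|_{\Omega^{00}})$ with $p=q=2$ to the family $\set{(\mu^\beta|_{\Omega^{00}},\nu^\beta|_{\Omega^{00}})}_\beta$: the sign conditions \eqref{eq:mu_nu_leq1} are part of \eqref{eq:kktbeta}, and the regularity \eqref{eq:bounds_on_mu_nu} follows from the uniform pointwise bound together with $c_0\in L^2(\Omega^{00})$. This produces a pair $(\bar\mu_0,\bar\nu_0)\in\clconv\set{(\mu^\beta|_{\Omega^{00}},\nu^\beta|_{\Omega^{00}})\given\beta}$ satisfying \eqref{eq:mstat} a.e.\ on $\Omega^{00}$.

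It remains to lift $(\bar\mu_0,\bar\nu_0)$ back to $B$. The continuous linear projection $\pi\colon L^2(\Omega)^4\to L^2(\Omega^{00})^2$, $(p,\mu,\nu,\lambda)\mapsto(\mu|_{\Omega^{00}},\nu|_{\Omega^{00}})$, is weak-to-weak continuous, and weak compactness of $B$ gives $\pi(B)=\clconv\set{(\mu^\beta|_{\Omega^{00}},\nu^\beta|_{\Omega^{00}})\given\beta}$. Hence there exists $(\bar p,\bar\mu,\bar\nu,\bar\lambda)\in B$ projecting to $(\bar\mu_0,\bar\nu_0)$, and by construction this quadruple satisfies every condition in \eqref{eq:lin_mstat}. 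I expect the crux to be this lifting step: weak compactness of $B$ is essential, and that is precisely what the uniform bound from \cref{lem:upper_bound_special_case} provides. In the setting of \itemref{asm:operator_A:average}, where \cref{lem:pointwise_bounds_summary} only yields a $\beta$-dependent bound, one would instead need to appeal to \cref{lem:preprocessing}.
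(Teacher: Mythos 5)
Your proposal is correct and follows essentially the same route as the paper: global optimality for each \eqref{eq:lpbeta}, multipliers from \cref{prop:kkt_lp_beta}, the uniform bound $2c_0$ from \cref{lem:upper_bound_special_case}, and then \cref{thm:schinabeck} applied on $\Omega^{00}$ with $p=q=2$. The only cosmetic difference is the final lifting step, which you handle via weak-to-weak continuity of the restriction map and weak compactness of $B$, whereas the paper extracts the same conclusion by taking the defining sequence of convex combinations of the quadruples and passing to a weakly convergent subsequence; the two arguments are equivalent.
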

\begin{proof}
	For each measurable $\beta\subset\Omega^{00}$,
	$(0,0,0)$ is also a minimizer of \eqref{eq:lpbeta}
	because this problem has a smaller feasible set.
	By \cref{prop:kkt_lp_beta} there exist
	multipliers $(p^\beta,\mu^\beta,\nu^\beta,\lambda^\beta) \in L^2(\Omega)^4$ 
	that satisfy \eqref{eq:kktbeta}.

	We want to apply \cref{thm:schinabeck}
	on $\Omega^{00}$ (instead of $\Omega$).
	By \cref{lem:upper_bound_special_case}, we can choose
	$(p^\beta,\mu^\beta,\nu^\beta,\lambda^\beta) \in L^2(\Omega)^4$
	in such a way that there is a common $L^2(\Omega)$-function
	that bounds these multipliers.
	This implies that \eqref{eq:bounds_on_mu_nu_1} and \eqref{eq:bounds_on_mu_nu_2}
	are satisfied.
	Applying \cref{thm:schinabeck} yields a point
	$(\bar\mu^{00},\bar\nu^{00})\in 
	\clconv\set{(\chi_{\Omega^{00}}\mu^\beta,\chi_{\Omega^{00}}\nu^\beta)
	\given \beta\subset\Omega^{00}\;\text{measurable}}$
	which satisfies
	$(\bar\mu^{00}<0\land\bar\nu^{00}<0)\lor\bar\mu^{00}\bar\nu^{00}=0$
	a.e.\ on $\Omega^{00}$.
	By the definition of $\clconv$, there exists a sequence
	$\set{(\mu^{00}_k,\nu^{00}_k)}_{k\in\N}\subset\conv\set{
		(\chi_{\Omega^{00}}\mu^\beta,\chi_{\Omega^{00}}\nu^\beta)
		\given \beta\subset\Omega^{00}\;\text{measurable}
	}$
	with $\mu^{00}_k\to\bar\mu^{00}$, $\nu^{00}_k\to\bar\nu^{00}$.
	Using the same convex combination, one can find
	multipliers
	$(p_k,\mu_k,\nu_k,\lambda_k)\in
	\conv\set{
		(p^\beta,\mu^\beta,\nu^\beta,\lambda^\beta) 
		\given \beta\subset\Omega^{00}\;\text{measurable}
	}
	$
	with $\mu^{00}_k=\chi_{\Omega^{00}}\mu_k$
	and $\nu^{00}_k=\chi_{\Omega^{00}}\nu_k$
	for all $k\in\N$.
	Due to convexity, these multipliers satisfy the conditions of
	\eqref{eq:kktbeta} that do not depend on $\beta$.
	As $(p_k,\mu_k,\nu_k,\lambda_k)$ is bounded,
	there exists a weakly convergent subsequence whose weak limit we will denote by
	$(\bar p,\bar\mu,\bar\nu,\bar\lambda)$.
	These multipliers also satisfy the conditions of
	\eqref{eq:kktbeta} that do not depend on $\beta$.
	It remains to show that the last condition of \eqref{eq:lin_mstat}
	holds for $\bar\mu$ and $\bar\nu$.
	This, however, is true due to $\bar\mu=\bar\mu^{00}$ and $\bar\nu=\bar\nu^{00}$
	a.e.\ on $\Omega^{00}$.
\end{proof}
For the case of \itemref{asm:operator_A:average}, the
proof is a bit more complicated,
as it is based on \cref{lem:preprocessing},
which has other requirements than \cref{thm:schinabeck}.
\begin{theorem}
	\label{thm:linear_mstat_b}
	Suppose that \itemref{asm:operator_A:average} or
	\itemref{asm:operator_A:average_with_scalars}
	holds.
	If $(0,0,0)$ is a (global)
	minimizer of \eqref{eq:mpcc2},
	then it is an M-stationary point.
\end{theorem}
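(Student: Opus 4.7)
The plan is to follow the proof of \cref{thm:linear_mstat_a} but to replace the application of \cref{thm:schinabeck} by \cref{lem:preprocessing}. The reason is that under \itemref{asm:operator_A:average} the pointwise bound from \cref{lem:pointwise_bounds_summary} carries a $\beta$-dependent constant $C_\beta$ which can blow up as $m(\Omega_w\cap\beta)\downto0$, so the uniform assumption \eqref{eq:bounds_on_mu_nu_2} fails; however, \eqref{eq:alpha_dependent_bound} in \cref{lem:preprocessing} is tailored to admit exactly this sort of $\beta$-dependence, provided the uniform conditions \eqref{eq:partial_boundedness} and \eqref{eq:rho_bounded} can be established.

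For each measurable $\beta\subset\Omega^{00}$, the feasible set of \eqref{eq:lpbeta} is contained in that of \eqref{eq:mpcc2}, so $(0,0,0)$ is still a minimizer and \cref{prop:kkt_lp_beta} yields multipliers $(p^\beta,\mu^\beta,\nu^\beta,\lambda^\beta)$ satisfying \eqref{eq:kktbeta}; by \cref{lem:pointwise_bounds_summary} I may assume $\max(\abs{p^\beta},\abs{\mu^\beta},\abs{\nu^\beta},\abs{\lambda^\beta})\leq C_\beta c_0$ a.e.\ on $\Omega$. I would then apply \cref{lem:preprocessing} on $(\Omega^{00},\AA\cap\Omega^{00},m)$ with $Y:=L^2(\Omega)$, with the restrictions $\mu^\beta|_{\Omega^{00}}$, $\nu^\beta|_{\Omega^{00}}$ as the first two multipliers, with $\rho^\beta:=p^\beta$, and with $T\colon L^2(\Omega^{00})\to L^2(\Omega)$ the extension-by-zero operator. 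The sign conditions \eqref{eq:mu_nu_leq} and the $\beta$-dependent bound \eqref{eq:alpha_dependent_bound} are immediate from \eqref{eq:kktbeta} and \cref{lem:pointwise_bounds_summary}. For the uniform bound \eqref{eq:partial_boundedness}, the KKT identities $\nu^\beta=p^\beta-F_\xi$ and $\mu^\beta=-F_u-A\adjoint p^\beta$ together with the rank-one structure of $A$ from \itemref{asm:operator_A:average} yield $\mu^\beta+\nu^\beta=-F_u-F_\xi-\dual{1}{p^\beta}$, and the scalar $\abs{\dual{1}{p^\beta}}$ is uniformly bounded by \cref{lem:upper_bound_l1} (or trivially by \cref{lem:upper_bound_special_case} when $m(\Omega^{+0})=0$); combined with the elementary estimate $\nu^\beta\leq F_w-F_\xi$ this delivers \eqref{eq:partial_boundedness}. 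For \eqref{eq:rho_bounded}, the difference $p^\beta-T(\nu^\beta|_{\Omega^{00}})$ equals $F_\xi$ on $\Omega^{00}\cup\Omega^{0+}$ (using $\nu^\beta=0$ on $\Omega^{0+}$) and equals $p^\beta=-F_u-\dual{1}{p^\beta}$ on $\Omega^{+0}$ (from $\mu^\beta=0$ there), which is again uniformly bounded in $L^2(\Omega)$. Case \itemref{asm:operator_A:average_with_scalars} is handled by repeating this computation with the scalars $d_1,d_2$.

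\Cref{lem:preprocessing} then produces $(\bar\mu_{00},\bar\nu_{00},\bar p)$ in the closed convex hull of $\set{(\mu^\beta|_{\Omega^{00}},\nu^\beta|_{\Omega^{00}},p^\beta)\given\beta\subset\Omega^{00}\;\text{measurable}}$ with $(\bar\mu_{00},\bar\nu_{00})$ satisfying \eqref{eq:mstat} on $\Omega^{00}$. I would extract a sequence of convex combinations $(\mu_k,\nu_k,p_k)$ with $(\mu_k,\nu_k)\to(\bar\mu_{00},\bar\nu_{00})$ strongly in $L^2(\Omega^{00})^2$ and, after passing to a subsequence, $p_k\to\bar p$ weakly in $L^2(\Omega)$, and set $\bar\mu:=-F_u-A\adjoint\bar p$, $\bar\nu:=\bar p-F_\xi$, $\bar\lambda:=\bar p-F_w$. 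By linearity of \eqref{eq:kktbeta} these coincide with the weak $L^2(\Omega)$-limits of the corresponding full multipliers under the same convex coefficients, so the $\beta$-independent sign and zero conditions of \eqref{eq:kktbeta} are preserved; by uniqueness of weak limits, $\bar\mu|_{\Omega^{00}}=\bar\mu_{00}$ and $\bar\nu|_{\Omega^{00}}=\bar\nu_{00}$, which promotes \eqref{eq:mstat} on $\Omega^{00}$ to the full system \eqref{eq:lin_mstat}. The main obstacle is the verification of \eqref{eq:rho_bounded}: the $L^2$-norm of $p^\beta$ may genuinely blow up with $\beta$, and is only tamed off $\Omega^{00}$ through the uniform bound on the single scalar $\dual{1}{p^\beta}$ supplied by the rank-one structure of $A\adjoint$.
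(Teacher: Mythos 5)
For case \itemref{asm:operator_A:average} your argument is correct and is essentially the paper's proof: multipliers for \eqref{eq:lpbeta} from \cref{prop:kkt_lp_beta}, the $\beta$-dependent pointwise bound from \cref{lem:pointwise_bounds_summary} combined with the uniform bound on $\dual{1}{\abs{p^\beta}}$ from \cref{lem:upper_bound_l1} (resp.\ \cref{lem:upper_bound_special_case} if $m(\Omega^{+0})=0$), and then \cref{lem:preprocessing} on $\Omega^{00}$. Your only deviation is the leaner choice $\rho^\beta:=p^\beta$, $Y=L^2(\Omega)$, $T$ the extension by zero, instead of the paper's four-component $\rho^\beta=(p^\beta,\chi_{\Omega^{0+}}\mu^\beta,\chi_{\Omega^{+0}}\nu^\beta,\lambda^\beta)$ with $T\nu=(\nu,0,0,\nu)$. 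This is legitimate: since $\mu^\beta$, $\nu^\beta$, $\lambda^\beta$ are affine functions of $p^\beta$ and convex coefficients sum to one, reconstructing $\bar\mu,\bar\nu,\bar\lambda$ from $\bar p$ and identifying their restrictions to $\Omega^{00}$ with the output of \cref{lem:preprocessing} works exactly as you describe, and your verification of \eqref{eq:rho_bounded} via $p^\beta=F_\xi$ on $\Omega^{0+}$ and $p^\beta=-F_u-\dual{1}{p^\beta}$ on $\Omega^{+0}$ is in substance the content of \cref{lem:another_special_case}, used by the paper for the same purpose.

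The gap is case \itemref{asm:operator_A:average_with_scalars}, which you dismiss with ``repeating this computation with the scalars $d_1,d_2$''. This does not go through verbatim. The cancellation that makes $\mu^\beta+\nu^\beta$ uniformly bounded is special to $d_1=1$: for general $d_1$ one gets $\mu^\beta+\nu^\beta=-F_u-F_\xi-(d_1-1)p^\beta-d_2\dual{1}{p^\beta}$, and on $\Omega^{00}\cap\Omega_w\cap\beta$ the only available pointwise bound on $p^\beta$ is $C_\beta c_0$ with $C_\beta\to\infty$ as $m(\Omega_w\cap\beta)\downto 0$, so \eqref{eq:partial_boundedness} is lost. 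Moreover, \cref{prop:kkt_lp_beta} and \cref{lem:upper_bound_l1,lem:another_special_case,lem:pointwise_bounds_summary} are only stated and proved under \itemref{asm:operator_A:nonneg} or \itemref{asm:operator_A:average}, so even the existence of the multipliers you start from is not covered under \itemref{asm:operator_A:average_with_scalars} without further argument. The paper avoids this by a reduction: rescale $w$, $\xi$ (and correspondingly $F_w$, $F_\xi$ and the multipliers) by $d_1$, which brings the operator to the form $v\mapsto d_1v+d_1\dual{1}{v}$, and then rescale the measure by $d_2/d_1$, after which case \itemref{asm:operator_A:average} applies and the M-stationarity system transforms back. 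Alternatively, you could salvage your route by feeding the pair $(\mu^\beta,d_1\nu^\beta)$ into \cref{lem:preprocessing} (then $\mu^\beta+d_1\nu^\beta=-F_u-d_1F_\xi-d_2\dual{1}{p^\beta}$ is uniformly bounded and \eqref{eq:mstat} for the rescaled pair is equivalent to \eqref{eq:mstat} for $(\bar\mu,\bar\nu)$ since $d_1>0$), but you would still have to extend the auxiliary results of \cref{sec:astat} to the operator with general $d_1,d_2$; as written, the treatment of this case is a genuine gap.
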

\begin{proof}
	We start with the case that
	\itemref{asm:operator_A:average}
	holds.
	For each measurable $\beta\subset\Omega^{00}$,
	$(0,0,0)$ is also a minimizer of \eqref{eq:lpbeta}
	because it has a smaller feasible set.
	By \cref{prop:kkt_lp_beta} there exist
	multipliers $(p^\beta,\mu^\beta,\nu^\beta,\lambda^\beta)
	\in L^2(\Omega)^4$ that satisfy \eqref{eq:kktbeta}.

	Let us argue that it is possible to choose the
	multipliers $(p^\beta,\mu^\beta,\nu^\beta,\lambda^\beta)$
	in such a way that they satisfy \eqref{eq:kktbeta} and
	\begin{align}
		\label{eq:avg_abs_est}
		\dual{1}{\abs{p^\beta}} &\leq C c_0
		\\
		\label{eq:beta_dependent_nu_estimate}
		\abs{\nu^\beta} &\leq C_\beta c_0
	\end{align}
	hold a.e.\ in $\Omega$, where $C>0$ is a constant which
	does not depend on $\beta$ and $C_\beta>0$ is a constant
	which can depend on $\beta$.

	Indeed, if $\Omega^{+0}$ has measure zero,
	then we can use \cref{lem:upper_bound_special_case} to redefine
	the multipliers $(p^\beta,\mu^\beta,\nu^\beta,\lambda^\beta)$
	in such a way that $\max(\abs{p^\beta},\abs{\nu^\beta})\leq 2c_0$
	is satisfied.
	In this case \eqref{eq:avg_abs_est} is satisfied
	with $C=2m(\Omega)$ and \eqref{eq:beta_dependent_nu_estimate}
	is satisfied with $C_\beta=2$.
	If, on the other hand, $\Omega^{+0}$ has positive measure,
	then we can use \cref{lem:pointwise_bounds_summary} to redefine
	the multipliers $(p^\beta,\mu^\beta,\nu^\beta,\lambda^\beta)$
	such that \eqref{eq:beta_dependent_nu_estimate} is satisfied.
	In this case, \eqref{eq:avg_abs_est} follows from 
	\cref{lem:upper_bound_l1}.

	We want to apply \cref{lem:preprocessing}
	on $\Omega^{00}$ (instead of $\Omega$),
	where we consider the restrictions of $\mu^\beta,\nu^\beta$
	on $\Omega^{00}$,
	and with
	\begin{equation*}
		\rho^\beta := (p^\beta,\chi_{\Omega^{0+}}\mu^\beta,
		\chi_{\Omega^{+0}}\nu^\beta,\lambda^\beta)
		\in L^2(\Omega)\times L^2(\Omega^{0+})
		\times L^2(\Omega^{+0})\times L^2(\Omega)
		=: Y.
	\end{equation*}
	The conditions \eqref{eq:mu_nu_leq}
	follow directly from \eqref{eq:kktbeta}.
	Next, we will show
	\begin{equation}
		\label{eq:boundedness_mu_nu_asm}
		\max(\abs{\mu^\beta+\nu^\beta},\nu^\beta)
		\leq Cc_0
	\end{equation}
	for all measurable $\beta\subset\Omega^{00}$,
	where $C>0$ is a constant which does not depend on $\beta$.
	From \eqref{eq:kktbeta} we obtain
	\begin{equation*}
		\nu^\beta = -F_\xi + p^\beta
		= -F_\xi + F_w + \lambda^\beta
		\leq -F_\xi + F_w \leq c_0
		\quad\aeon\Omega.
	\end{equation*}
	From \eqref{eq:kktbeta} we also obtain
	\begin{equation*}
		\mu^\beta+\nu^\beta
		= - F_\xi + p^\beta - F_u - A\adjoint p^\beta
		= -F_\xi + - F_u - \dual{1}{p^\beta}.
	\end{equation*}
	This implies
	$\abs{\mu^\beta+\nu^\beta}
	\leq c_0 + \dual{1}{\abs{p^\beta}}\leq (1+C)c_0$
	a.e.\ in $\Omega$ due to \eqref{eq:avg_abs_est}.
	In summary, \eqref{eq:boundedness_mu_nu_asm} holds.
	The assumptions \eqref{eq:partial_boundedness} and \eqref{eq:alpha_dependent_bound}
	follow from \eqref{eq:beta_dependent_nu_estimate}
	and \eqref{eq:boundedness_mu_nu_asm} with suitable constants.

	Next, we will show \eqref{eq:rho_bounded} with the bounded operator
	$T\colon L^2(\Omega^{00})\to Y$ given by $T\nu = (\nu,0,0,\nu)$.
	We observe that we have
	$\abs{p^\beta}\leq 2c_0$ a.e.\ on $\Omega\setminus\Omega^{00}$
	by \cref{lem:another_special_case}.
	This can be combined with \eqref{eq:avg_abs_est} to yield
	$\abs{A\adjoint p^\beta}\leq \abs{p^\beta}+\dual{1}{\abs{p^\beta}} \leq 
	(2+C)c_0$ a.e.\ on $\Omega\setminus\Omega^{00}$.
	Using these properties and \eqref{eq:kktbeta} one can obtain
	estimates of the form
	\begin{equation*}
		\norm{p^\beta}_{L^2(\Omega\setminus\Omega^{00})}
		+\norm{\nu^\beta}_{L^2(\Omega\setminus\Omega^{00})}
		+\norm{\mu^\beta}_{L^2(\Omega\setminus\Omega^{00})}
		+\norm{\lambda^\beta}_{L^2(\Omega\setminus\Omega^{00})}
		\leq (11+C)\norm{c_0}_{L^2(\Omega)}.
	\end{equation*}
	Then \eqref{eq:rho_bounded} follows by the estimates
	\begin{align*}
		\norm{\rho^\beta-T(\chi_{\Omega^{00}}\nu^\beta)}_Y
		&= 
		\norm{p^\beta-\chi_{\Omega^{00}}\nu^\beta}_{L^2(\Omega)}
		+\norm{\mu^\beta}_{L^2(\Omega^{0+})}
		\\ &\qquad
		+\norm{\nu^\beta}_{L^2(\Omega^{+0})}
		+\norm{\lambda^\beta-\chi_{\Omega^{00}}\nu^\beta}_{L^2(\Omega)}
		\\ &\leq
		\norm{p^\beta}_{L^2(\Omega\setminus\Omega^{00})}
		+\norm{p^\beta-\nu^\beta}_{L^2(\Omega^{00})}
		+\norm{\mu^\beta}_{L^2(\Omega^{0+})}
		\\ &\qquad
		+\norm{\nu^\beta}_{L^2(\Omega^{+0})}
		+\norm{\lambda^\beta}_{L^2(\Omega\setminus\Omega^{00})}
		+\norm{\lambda^\beta-\nu^\beta}_{L^2(\Omega^{00})}
		\\ &\leq
		(11+C)\norm{c_0}_{L^2(\Omega)}+\norm{F_\xi}_{L^2(\Omega^{00})}
		+
		\norm{p^\beta-F_w-p^\beta+F_\xi}_{L^2(\Omega^{00})}
		\\ &\leq
		(14+C)\norm{c_0}_{L^2(\Omega)}.
	\end{align*}
	In summary, \cref{lem:preprocessing} can be applied.
	Thus, there exists a point
	$(\bar p,\bar\mu,\bar\nu,\bar\lambda)$ in the closure
	of the convex hull of 
	$\set{(p^\beta,\mu^\beta,\nu^\beta,\lambda^\beta)\given
	\beta\subset\Omega^{00}\;\text{measurable}}$
	which satisfies \eqref{eq:mstat}.
	Due to convexity, the other conditions
	of \eqref{eq:lin_mstat} follow from \eqref{eq:kktbeta}.

	For the case of
	\itemref{asm:operator_A:average_with_scalars},
	the result follows from the result for the case of
	\itemref{asm:operator_A:average_with_scalars} by rescaling.
	First, one can scale $A$ (and rescale the data, variables and multipliers accordingly)
	such that the result applies to
	the case that $A$ is given by $Av = d_1v+d_1\dual{1}{v}$
	for all $v\in L^2(\Omega)$.
	Then, one can scale the measure $m$ by $d_2/d_1$
	to allow for an operator $A$ with
	$A v= d_1v + d_2 \dual{1}{v}$ for all $v\in L^2(\Omega)$.
\end{proof}

\section{An inverse optimal control problem}
\label{sec:IOC}
In this section, we consider a class of inverse optimal control problems.
We will see that our M-stationarity results
from \cref{sec:a_to_m,sec:linear_mpcc}
can be applied to the KKT reformulation of this
inverse optimal control problem under some assumptions.

\subsection{Problem setting and existence of solutions}
\label{subsec:IOC_setting}
Inverse optimal control problems can be modeled
as bilevel optimization problems.
We are interested in the following problem class.
The upper-level problem is given by
\begin{equation*}
	\label{eq:IOC}
	\tag{IOC}
	\begin{minproblem}[u \in L^2(\Omega), w \in H_0^1(\Omega)]
		{
			F(u, w)
			:=
			f(u)
			+ \frac12 \norm{w}_{H_0^1(\Omega)}^2
			+ \dual{\zeta}{w}_{L^2(\Omega)}
		}
		& \text{$u$ solves \eqref{eq:OC_w}}, \\
		& w \ge w_a \quad\aeon\Omega,
	\end{minproblem}
\end{equation*}
while
for each $w \in L^2(\Omega)$
the lower-level problem
is
the parametrized optimal control problem
\begin{equation*}
	\label{eq:OC_w}
	\tag{OC$(w)$}
	\begin{minproblem}[u \in L^2(\Omega)]{
		\frac12 \norm{S u - y_d}_Y^2 + \frac\alpha2 \norm{u - w}_{L^2(\Omega)}^2
	}
	u \ge u_a \quad\aeon\Omega.
	\end{minproblem}
\end{equation*}
We fix the assumptions concerning the data
in both problems.
Throughout this section,
$\Omega \subset \R^n$ is open and bounded
(and equipped with the Lebesgue measure), $n \in \N$,
$Y$ is a (real) Hilbert space, $S \in \LL(L^2(\Omega), Y)$,
$\alpha > 0$,
$u_a \in L^2(\Omega)$.
For the upper-level problem,
we assume
$w_a \in H^1(\Omega)$ such that $-\Delta w_a \in L^2(\Omega)$
and $w_a \le 0$ on $\partial\Omega$ in the sense $\max(w_a, 0) \in H_0^1(\Omega)$,
$\zeta \in L^2(\Omega)$
and $f \colon L^2(\Omega) \to \R$
is assumed to be continuously Fréchet differentiable
and is bounded from below.
We further require some regularity,
i.e.,
we assume that there exists $p > 2$
such that
$S\adjoint S \in \LL(L^2(\Omega), L^p(\Omega))$,
$S\adjoint y_d, u_a \in L^p(\Omega)$
and $H_0^1(\Omega) \embeds L^p(\Omega)$.
Finally, we mention that we use
\begin{equation*}
	\norm{w}_{H_0^1(\Omega)}
	:=
	\norm{\nabla w}_{L^2(\Omega)^n},
\end{equation*}
i.e., we equip $H_0^1(\Omega)$
with the $H^1(\Omega)$-seminorm,
and the associated inner product is denoted by
\begin{equation*}
	\dual{v}{w}_{H_0^1(\Omega)}
	:=
	\dual{\nabla v}{\nabla w}_{L^2(\Omega)^n}
	.
\end{equation*}

We note that the upper-level variable $w$
enters the lower-level problem
\eqref{eq:OC_w}
as the desired control (sometimes also denoted reference control).

We define the lower-level and upper-level feasible sets by
\begin{equation*}
	\Uad := \set{v \in L^2(\Omega) \given v \ge u_a \text{ a.e.}}
	,\qquad
	\Wad := \set{v \in H_0^1(\Omega) \given v \ge w_a \text{ a.e.}}
	,
\end{equation*}
respectively.
It is well known that
for fixed $w \in L^2(\Omega)$,
the unique solution $u \in \Uad$ of \eqref{eq:OC_w}
is characterized by the variational inequality (VI)
\begin{equation}
	\label{eq:OC_w_VI}
	\dual[\big]{S\adjoint(S u - y_d) + \alpha (u - w)}{v - u}_{L^2(\Omega)}
	\ge
	0
	\qquad
	\forall v \in \Uad.
\end{equation}
For later reference,
the solution operator
(mapping $w \mapsto u$)
of
\eqref{eq:OC_w}
(or, equivalently, \eqref{eq:OC_w_VI})
is denoted by
$T \colon L^2(\Omega)\to L^2(\Omega)$.
Using the continuity of $T$
and the compact embedding $H_0^1(\Omega) \embeds L^2(\Omega)$,
see \cite[Theorem~1.34]{Troianiello1987},
we can show the existence of solutions of \eqref{eq:IOC}.

\begin{lemma}
	\label{lem:existence_IOC}
	The inverse optimal control problem \eqref{eq:IOC}
	possesses a global solution.
\end{lemma}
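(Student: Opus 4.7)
The plan is to apply the direct method of the calculus of variations. Let $\set{(u_k,w_k)}_{k\in\N}$ be a minimizing sequence in the feasible set, i.e., $w_k \in \Wad$ and $u_k = T(w_k)$, with $F(u_k,w_k) \downto \inf \eqref{eq:IOC}$. Since $f$ is bounded from below by some constant $\underline f \in \R$, and using Young's inequality together with the Poincaré-type estimate inherent in the norm $\norm{\cdot}_{H_0^1(\Omega)}$, we obtain
\begin{equation*}
	F(u_k,w_k) \geq \underline f + \tfrac12 \norm{w_k}_{H_0^1(\Omega)}^2 - \norm{\zeta}_{L^2(\Omega)} \norm{w_k}_{L^2(\Omega)} \geq \underline f + \tfrac14 \norm{w_k}_{H_0^1(\Omega)}^2 - C
\end{equation*}
for a suitable constant $C>0$. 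This yields boundedness of $\set{w_k}$ in $H_0^1(\Omega)$ and also shows that the infimum is finite.

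Next I would extract a subsequence (without relabeling) such that $w_k \weakto \bar w$ weakly in $H_0^1(\Omega)$. By the compact embedding $H_0^1(\Omega) \embeds L^2(\Omega)$, we have $w_k \to \bar w$ strongly in $L^2(\Omega)$. Since $\Wad$ is convex and closed in $H_0^1(\Omega)$, hence weakly closed, we obtain $\bar w \in \Wad$. Continuity of the solution operator $T \colon L^2(\Omega) \to L^2(\Omega)$ (stated just before the lemma) then yields $u_k = T(w_k) \to T(\bar w) =: \bar u$ in $L^2(\Omega)$, so that $(\bar u,\bar w)$ is a feasible point of \eqref{eq:IOC}.

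Finally, I would pass to the limit in the objective. Continuity of $f$ (from Fréchet differentiability) gives $f(u_k) \to f(\bar u)$; weak lower semicontinuity of $\norm{\cdot}_{H_0^1(\Omega)}^2$ yields $\liminf_{k\to\infty} \tfrac12\norm{w_k}_{H_0^1(\Omega)}^2 \geq \tfrac12 \norm{\bar w}_{H_0^1(\Omega)}^2$; and $\dual{\zeta}{w_k}_{L^2(\Omega)} \to \dual{\zeta}{\bar w}_{L^2(\Omega)}$ follows from the strong $L^2$-convergence of $w_k$. Combining these gives $F(\bar u,\bar w) \leq \liminf_{k\to\infty} F(u_k,w_k) = \inf \eqref{eq:IOC}$, so $(\bar u,\bar w)$ is a global minimizer. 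There is no real obstacle here; the proof is a textbook application of the direct method, with the only point requiring care being the coercivity estimate for the linear term $\dual{\zeta}{w}_{L^2(\Omega)}$, which is handled cleanly by Young's inequality as above.
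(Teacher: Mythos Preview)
Your proof is correct and follows essentially the same approach as the paper: the paper's proof is a brief sketch invoking continuity of $T$, coercivity of the objective in $w$, the compact embedding $H_0^1(\Omega)\embeds L^2(\Omega)$, and the direct method, while you have faithfully filled in the details of that sketch.
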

\begin{proof}
	The solution mapping $w \mapsto u$
	of the lower-level problem \eqref{eq:OC_w}
	is continuous from $L^2(\Omega)$ to $L^2(\Omega)$,
	which can be seen from
	the projection formula \eqref{eq:lower-level_projection}.
	Since the objective of \eqref{eq:IOC}
	is coercive w.r.t.\ $w \in H_0^1(\Omega)$,
	existence of solutions follows by
	the direct method of calculus of variations.
\end{proof}
Due to the non-convexity of \eqref{eq:IOC},
one cannot expect uniqueness of solutions.

\subsection{Regularity of solutions}
\label{subsec:IOC_regularity}
In this section,
we show that every locally optimal $\bar w \in H_0^1(\Omega)$
of \eqref{eq:IOC} enjoys the additional regularity
$-\Delta\bar w \in L^2(\Omega)$.
This will become important for a density argument in the B-stationarity system,
see \cref{lem:B_stat} below.

We mention that a similar regularity effect is well known
for the optimal control of the obstacle problem.
Using a regularization approach,
one can derive a system of C-stationarity
and this can be used to show $H^1$-regularity of the control
using the projection formula.
We further note,
that such a regularization procedure is not needed
in the unconstrained setting,
since the additional regularity follows directly from
the system of B-stationarity,
see \cite[Proof of Proposition~4.1]{Mignot1976},
\cite[Proof of Theorem~5.3]{Wachsmuth2016:2}.

We also mention that we do not need to derive
a system of C-stationarity in its full glory,
since the additional regularity of $\bar w$
is sufficient in the next section.

We start by regularizing the VI \eqref{eq:OC_w_VI}.
To this end, we introduce
the monotone penalty function $\pi \colon \R \to \R$ via
\begin{equation*}
	\pi(s) :=
	\begin{cases}
		s + \frac12 & \text{if }s\leq -1, \\
		-\frac12 s^2  & \text{if }-1<s< 0, \\
		0 & \text{if }s\geq 0.
	\end{cases}
\end{equation*}
Note that $\pi$ is continuously differentiable
with a bounded derivative.
The Nemytskii operators associated with $\pi$ and $\pi'$
are denoted by the same symbols, respectively.
We approximate \eqref{eq:OC_w_VI}
by the equation
\begin{equation*}
	\label{eq:OC_ud_gamma}
	\tag{OC$_\gamma(w)$}
	e_\gamma(u, w)
	:=
	S\adjoint (S u - y_d) + \alpha (u - w) + \gamma \pi(u - u_a) = 0
	,
\end{equation*}
where
$\gamma > 0$ is the regularization parameter.
The first lemma provides the differentiability of this regularization.
\begin{lemma}
	\label{lem:regularized_OC}
	For every $w \in L^p(\Omega)$,
	there exists a unique solution $u =: T_\gamma(w) \in L^p(\Omega)$
	of \eqref{eq:OC_ud_gamma}.
	The solution operator $T_\gamma \colon L^p(\Omega) \to L^2(\Omega)$
	is Fréchet differentiable.
	For all $w,h \in L^p(\Omega)$,
	the derivative $v = T_\gamma'(w) h$ satisfies
	\begin{equation}
		\label{eq:derivative_T_gamma}
		S\adjoint S v + \alpha ( v - h ) + \gamma \pi'(u - u_a) v = 0,
	\end{equation}
	where $u = T_\gamma(w)$.
\end{lemma}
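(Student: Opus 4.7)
The plan is to view \eqref{eq:OC_ud_gamma} as the Euler--Lagrange condition of the strongly convex functional $u\mapsto \tfrac12\norm{Su-y_d}_Y^2+\tfrac{\alpha}2\norm{u-w}_{L^2(\Omega)}^2+\gamma\int_\Omega \Pi(u-u_a)\,\d m$, where $\Pi$ is a convex antiderivative of $\pi$ (convex because $\pi'\geq 0$). Strong convexity in $L^2(\Omega)$ immediately yields a unique minimizer, which is the unique $L^2(\Omega)$-solution of \eqref{eq:OC_ud_gamma}; equivalently, $u\mapsto S\adjoint(Su-y_d)+\alpha u+\gamma\pi(u-u_a)$ is strongly monotone (with constant $\alpha$) and continuous on $L^2(\Omega)$, so existence and uniqueness of a solution $u\in L^2(\Omega)$ follow from Browder--Minty.

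To upgrade this to $u\in L^p(\Omega)$, I would use a pointwise inversion. For each fixed $c\in\R$, the scalar map $\phi_c(t):=\alpha t+\gamma\pi(t-c)$ satisfies $\phi_c'\geq\alpha>0$, so $\phi_c^{-1}$ is Lipschitz with constant $1/\alpha$, and a direct computation gives $\abs{\phi_c^{-1}(0)}\leq(\gamma/\alpha)\max(c,0)$. Rewriting \eqref{eq:OC_ud_gamma} pointwise as $\phi_{u_a(x)}(u(x))=\alpha w(x)+(S\adjoint y_d)(x)-(S\adjoint Su)(x)$ and using the assumptions $w,u_a,S\adjoint y_d\in L^p(\Omega)$ together with the mapping property $S\adjoint S\colon L^2(\Omega)\to L^p(\Omega)$ yields $u\in L^p(\Omega)$. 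Applying the same pointwise bound to the difference of \eqref{eq:OC_ud_gamma} at two right-hand sides $w_1,w_2\in L^p(\Omega)$ shows that $T_\gamma\colon L^p(\Omega)\to L^p(\Omega)$ is Lipschitz, and in particular so is $T_\gamma\colon L^p(\Omega)\to L^2(\Omega)$.

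For Fréchet differentiability, the linearization \eqref{eq:derivative_T_gamma} is uniquely solvable in $L^2(\Omega)$ for every $h\in L^2(\Omega)$ by Lax--Milgram: the associated bilinear form is bounded and $\alpha$-coercive since $\pi'\geq 0$. Setting $v:=T_\gamma'(w)h$, $u:=T_\gamma(w)$, and $\delta u:=T_\gamma(w+h)-T_\gamma(w)$, I would subtract \eqref{eq:derivative_T_gamma} from the difference of \eqref{eq:OC_ud_gamma} at $(u+\delta u,w+h)$ and $(u,w)$ and use Taylor's theorem on $\pi$ to obtain
\begin{equation*}
	(S\adjoint S+\alpha\id+\gamma\pi'(u-u_a))(\delta u-v)=-\gamma R,
\end{equation*}
with $R(x):=\delta u(x)\int_0^1\bigl(\pi'((u-u_a)(x)+t\,\delta u(x))-\pi'((u-u_a)(x))\bigr)\,\d t$. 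Testing this identity with $\delta u-v$ in $L^2(\Omega)$ and using coercivity gives $\norm{\delta u-v}_{L^2(\Omega)}\leq(\gamma/\alpha)\norm{R}_{L^2(\Omega)}$.

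The main obstacle is the remainder estimate for $R$, and this is where the two-norm nature of the statement (mapping into $L^2(\Omega)$ while $w$ is measured in $L^p(\Omega)$) becomes essential. Writing $\omega(\delta u)(x):=\sup_{t\in[0,1]}\abs{\pi'((u-u_a)(x)+t\delta u(x))-\pi'((u-u_a)(x))}$, Hölder's inequality with exponents $p/2$ and $p/(p-2)$ (valid because $p>2$) gives $\norm{R}_{L^2(\Omega)}\leq\norm{\omega(\delta u)}_{L^{2p/(p-2)}(\Omega)}\norm{\delta u}_{L^p(\Omega)}$. The second factor is $O(\norm{h}_{L^p(\Omega)})$ by the $L^p$-Lipschitz continuity of $T_\gamma$ established above; for the first factor, any sequence $h_n\to 0$ in $L^p(\Omega)$ produces $\delta u_n\to 0$ in $L^p(\Omega)$, hence pointwise a.e.\ along a subsequence, and continuity together with boundedness of $\pi'$ combined with dominated convergence plus the usual subsequence principle upgrades this to $\norm{\omega(\delta u_n)}_{L^{2p/(p-2)}(\Omega)}\to 0$ along the full sequence. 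This delivers $\norm{\delta u-v}_{L^2(\Omega)}=o(\norm{h}_{L^p(\Omega)})$. A direct $L^p\to L^p$ argument is not available because the Nemytskii operator associated with the non-affine $\pi$ is in general not Fréchet differentiable from $L^p(\Omega)$ into itself.
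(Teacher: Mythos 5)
Your proposal is correct, and its first half (existence/uniqueness in $L^2(\Omega)$ via strong monotonicity, then the $L^p(\Omega)$ bootstrap through the pointwise Lipschitz inverse of $t \mapsto \alpha t + \gamma\pi(t-c)$) coincides with the paper's argument, which writes the same inversion as $u - u_a = (\alpha\id+\gamma\pi)^{-1}\paren[\big]{S\adjoint(y_d-Su)+\alpha(w-u_a)}$. Where you genuinely diverge is the differentiability step: the paper does not estimate a remainder at all, but instead invokes \cite[Theorem~7]{GoldbergKampowskyTroeltzsch1992} to get Fréchet differentiability of $e_\gamma \colon L^p(\Omega)\times L^p(\Omega)\to L^2(\Omega)$, notes via Lax--Milgram that the partial derivative $e_{\gamma,u}(u,w)$ extends to a bijection of $L^2(\Omega)$, and then applies the two-norm implicit function theorem \cite[Theorem~2.1]{Wachsmuth2012:2} with $Y^+=U=L^p(\Omega)$, $Y^0=Z^0=L^2(\Omega)$ to obtain differentiability of $T_\gamma\colon L^p(\Omega)\to L^2(\Omega)$ and \eqref{eq:derivative_T_gamma}. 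You instead prove this implicit-function step by hand: coercivity of $S\adjoint S+\alpha\id+\gamma\pi'(u-u_a)$ gives $\norm{\delta u - v}_{L^2(\Omega)}\le(\gamma/\alpha)\norm{R}_{L^2(\Omega)}$, and the remainder is killed by Hölder with exponents $p/2$ and $p/(p-2)$, the $L^p$-Lipschitz continuity of $T_\gamma$, and dominated convergence (your bound $\omega(\delta u)\le\min(\abs{\delta u},1)$ on the finite-measure set $\Omega$ makes the subsequence argument work for all $p>2$, not just $p\ge 4$). The paper's route is shorter and makes the two-norm structure explicit by delegating it to citable general results, which is convenient if the same argument is reused; your route is self-contained and makes visible exactly where $p>2$ and the boundedness of $\pi'$ enter, at the cost of redoing a known Nemytskii/implicit-function estimate. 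Both correctly exploit the same essential point, which you state at the end: differentiability is only available into $L^2(\Omega)$ while measuring increments in $L^p(\Omega)$.
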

\begin{proof}
	By monotone operator theory,
	e.g. \cite[Theorem~4.17]{Troianiello1987},
	the existence of a solution $u \in L^2(\Omega)$
	of \eqref{eq:OC_ud_gamma}
	for all $w \in L^2(\Omega)$ follows.
	Uniqueness and Lipschitz continuity of $T_\gamma \colon L^2(\Omega) \to L^2(\Omega)$
	follows by a standard argument:
	if $u_i$ is a solution for $w_i$, $i = 1, 2$,
	we get
	\begin{align*}
		0
		&=
		\dual{e_\gamma(u_2, w_2) - e_\gamma(u_1, w_1)}{u_2 - u_1}_{L^2(\Omega)}
		\\&\ge
		\alpha \norm{u_2 - u_1}_{L^2(\Omega)}^2
		-
		\alpha \dual{w_2 - w_1}{u_2 - u_1}_{L^2(\Omega)}
	\end{align*}
	and this yields $\norm{u_2 - u_1}_{L^2(\Omega)} \le \norm{w_2 - w_1}_{L^2(\Omega)}$.

	In case $w \in L^p(\Omega)$,
	the additional regularity of $u$ follows by a bootstrapping argument:
	By rearranging the equation, we find
	\begin{equation*}
		u - u_a
		=
		(\alpha \id + \gamma \pi)^{-1} \parens[\big]{
			S\adjoint (y_d - S u) + \alpha (w - u_a)
		},
	\end{equation*}
	where $(\alpha \id + \gamma \pi)^{-1} \colon \R \to \R$
	is the Lipschitz continuous inverse of
	$\alpha \id + \gamma \pi \colon \R \to \R$.
	Due to the assumed regularities
	$S\adjoint S \in \LL(L^2(\Omega), L^p(\Omega))$,
	$S\adjoint y_d, u_a \in L^p(\Omega)$,
	the Lipschitz regularity
	$T_\gamma \colon L^p(\Omega) \to L^2(\Omega)$
	is improved to the spaces
	$T_\gamma \colon L^p(\Omega) \to L^p(\Omega)$.

	Using \cite[Theorem~7]{GoldbergKampowskyTroeltzsch1992},
	we find that
	$e_\gamma \colon L^p(\Omega) \times L^p(\Omega) \to L^2(\Omega)$
	is Fréchet differentiable
	and the partial Fréchet derivative
	$e_{\gamma, u}(u, w) \in \LL(L^p(\Omega), L^2(\Omega))$
	is given by
	\begin{equation*}
		e_{\gamma, u}(u, w) v
		=
		S\adjoint S v + \alpha v + \gamma \pi'(u - u_a) v
		\qquad\forall v \in L^p(\Omega)
		.
	\end{equation*}
	This operator can be extended to
	$e_{\gamma, u}(u, w) \in \LL(L^2(\Omega), L^2(\Omega))$
	and, via the Lemma of Lax--Milgram, this extension is bijective.

	Now, we can apply the differentiability result
	\cite[Theorem~2.1]{Wachsmuth2012:2}
	with the setting
	\begin{equation*}
		Y^+ = U = L^p(\Omega),
		\qquad
		Y^0 = Z^0 = L^2(\Omega).
	\end{equation*}
	This shows the Fréchet differentiability of
	$T_\gamma \colon L^p(\Omega) \to L^2(\Omega)$
	and the stated equation for the derivative.
\end{proof}
Next, we address the convergence of the regularization.
\begin{lemma}
	\label{lem:convergence_reg}
	Let $\seq{w_k}_{k \in \N} \subset L^2(\Omega)$
	be a sequence with $w_k \to w$ in $L^2(\Omega)$.
	Further, let $\seq{\gamma_k}_{k \in \N} \subset \R^+$
	be given such that $\gamma_k \to \infty$.
	We denote by $u_k = T_{\gamma_k}(w_k) \in L^2(\Omega)$
	the corresponding unique solution of
	\eqref{eq:OC_ud_gamma},
	i.e., $e_{\gamma_k}(u_k, w_k) = 0$.
	Then, $u_k \to u$ in $L^2(\Omega)$,
	where $u$ solves \eqref{eq:OC_w}.
\end{lemma}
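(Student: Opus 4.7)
The argument is a standard penalty convergence scheme, with the twist that the penalized iterates $u_k$ need not be feasible for the limiting variational inequality. First I would derive a uniform $L^2$-bound on $u_k$ by testing the defining equation $e_{\gamma_k}(u_k, w_k) = 0$ against $u_k - u_a$. Since $\pi$ is non-decreasing with $\pi(0) = 0$, the product $\pi(s)\, s$ is non-negative for every $s \in \R$, so the penalty contribution $\gamma_k \dual{\pi(u_k - u_a)}{u_k - u_a}_{L^2(\Omega)}$ is non-negative and drops out; combined with Young's inequality and the $L^2$-boundedness of $w_k$, this yields $\sup_k \norm{u_k}_{L^2(\Omega)} < \infty$. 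Rearranging \eqref{eq:OC_ud_gamma} then expresses $\gamma_k \pi(u_k - u_a)$ as a bounded quantity in $L^2(\Omega)$, hence $\norm{\pi(u_k - u_a)}_{L^2(\Omega)} = O(\gamma_k^{-1}) \to 0$.

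Second, the explicit form of $\pi$ yields via a case split on $\set{u_k - u_a \ge -1}$ versus $\set{u_k - u_a < -1}$ the pointwise estimate $((u_k - u_a)^-)^2 \le 2\abs{\pi(u_k - u_a)} + 4\abs{\pi(u_k - u_a)}^2$, so $(u_k - u_a)^- \to 0$ strongly in $L^2(\Omega)$; extracting a weakly convergent subsequence $u_k \rightharpoonup u^*$ in $L^2(\Omega)$ and using weak lower semicontinuity of the convex, continuous functional $v \mapsto \norm{v^-}_{L^2(\Omega)}$ then yields $u^* \in \Uad$. Next, for any $v \in \Uad$, monotonicity of $\pi$ together with $\pi(v - u_a) = 0$ forces $\pi(u_k - u_a)(u_k - v) \ge 0$ a.e.; testing \eqref{eq:OC_ud_gamma} with $u_k - v$ thus discards the penalty term and produces the approximate variational inequality
\begin{equation*}
	\dual{S\adjoint(S u_k - y_d) + \alpha (u_k - w_k)}{v - u_k}_{L^2(\Omega)} \ge 0 \qquad \forall v \in \Uad.
\end{equation*}
Taking $\liminf_{k \to \infty}$ in this inequality, using $u_k \rightharpoonup u^*$, $Su_k \rightharpoonup Su^*$, and weak lower semicontinuity of $\norm{\cdot}_{L^2(\Omega)}^2$ and $\norm{\cdot}_Y^2$, identifies $u^*$ as a solution of \eqref{eq:OC_w_VI} with right-hand side $w$, so $u^* = T(w)$ by uniqueness.

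Finally, for strong convergence I would test the approximate VI at $v = u^* \in \Uad$ and the limit VI \eqref{eq:OC_w_VI} at $v = \max(u_k, u_a) = u_k + (u_k - u_a)^- \in \Uad$; adding the two inequalities gives
\begin{equation*}
	\alpha \norm{u_k - u^*}_{L^2(\Omega)}^2 \le \alpha \dual{w_k - w}{u_k - u^*}_{L^2(\Omega)} + \dual{S\adjoint(S u^* - y_d) + \alpha(u^* - w)}{(u_k - u_a)^-}_{L^2(\Omega)},
\end{equation*}
and both right-hand terms vanish as $k \to \infty$ by the first step and $w_k \to w$. A standard subsequence-of-subsequence argument, together with uniqueness of $T(w)$, promotes this to convergence of the full sequence. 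The main obstacle is precisely this strong-convergence step: because $u_k$ need not lie in $\Uad$, one cannot test \eqref{eq:OC_w_VI} directly at $v = u_k$, and the feasibilization $\max(u_k, u_a)$ is what introduces the correction term that must be absorbed via the $L^2$-convergence $(u_k - u_a)^- \to 0$ established from the bound on $\pi(u_k - u_a)$.
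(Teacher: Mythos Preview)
Your argument is correct and follows the same penalty-convergence scheme as the paper (uniform bounds from testing against $u_k - u_a$, weak subsequential limit, feasibility of the limit, passage to the limit in the VI, upgrade to strong convergence, subsequence-subsequence). The tactical choices differ in two places. For feasibility of the weak limit, the paper keeps the penalty contribution in the energy estimate to obtain $\gamma_k\dual{\pi(u_k-u_a)}{u_k-u_a}_{L^2(\Omega)} \le C$ and then uses weak lower semicontinuity of the convex functional $v \mapsto \dual{\pi(v)}{v}_{L^2(\Omega)}$ to conclude $\tilde u \in \Uad$; you instead rearrange the equation itself to force $\pi(u_k-u_a)\to 0$ in $L^2(\Omega)$ and exploit the explicit piecewise formula for $\pi$ to deduce the stronger statement $(u_k-u_a)^- \to 0$. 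For the strong-convergence step, the paper tests the limit VI at the weak limit $\tilde u$ and the regularized equation with $u_k - u$, using $\dual{\pi(u_k-u_a)}{u_k-u}_{L^2(\Omega)} \ge 0$ (valid since $u \in \Uad$) to drop the penalty; this delivers both $u_k \to u$ and $\tilde u = u$ in a single estimate. You instead first identify $u^* = T(w)$ via a Minty-type limit passage and then run a separate estimate using the feasibilized test point $\max(u_k,u_a) = u_k + (u_k-u_a)^-$, absorbing the resulting correction term via your earlier $(u_k-u_a)^- \to 0$. The paper's route is a bit more economical and never uses the concrete form of $\pi$ beyond monotonicity and convexity of $s \mapsto s\,\pi(s)$; your route yields the explicit feasibility defect $(u_k-u_a)^- \to 0$ as a byproduct and makes the role of the feasibilization transparent.
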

\begin{proof}
	We start by multiplying
	the equation
	$e_{\gamma_k}(u_k, w_k) = 0$
	by
	$u_k - u_a$.
	This yields
	\begin{align*}
		&\norm{S u_k}_Y^2 + \alpha \norm{u_k}_{L^2(\Omega)}^2
		+
		\gamma_k \dual{\pi(u_k - u_a)}{u_k - u_a}_{L^2(\Omega)}
		\\&\qquad=
		\dual{y_d + S u_a}{S u_k}_{Y}
		-
		\dual{y_d}{S u_a}_{Y}
		+
		\alpha\dual{w_k + u_a}{u_k}_{L^2(\Omega)}
		-
		\alpha\dual{w_k}{u_a}_{L^2(\Omega)}
		.
	\end{align*}
	On the right-hand side, we can use Young's inequality to obtain
	\begin{equation*}
		\frac12 \norm{S u_k}_Y^2 + \frac\alpha2 \norm{u_k}_{L^2(\Omega)}^2
		+
		\gamma_k \dual{\pi(u_k - u_a)}{u_k - u_a}_{L^2(\Omega)}
		\le
		C,
	\end{equation*}
	where the constant $C \ge 0$ is independent of $k \in \N$.
	Thus, there exists a subsequence (denoted by the same symbol)
	with $u_k \weakly \tilde u$ in $L^2(\Omega)$.
	Due to convexity of the mapping
	$v \mapsto \dual{\pi(v)}{v}_{L^2(\Omega)}$,
	we find
	\begin{equation*}
		0
		\le
		\dual{\pi(\tilde u - u_a)}{\tilde u - u_a}_{L^2(\Omega)}
		\le
		\liminf_{k \to \infty}
		\dual{\pi(u_k - u_a)}{u_k - u_a}_{L^2(\Omega)}
		\le
		\liminf_{k \to \infty}
		\frac{C}{\gamma_k}
		=
		0.
	\end{equation*}
	Thus, $\tilde u \in \Uad$.
	Hence, we can use $v = \tilde u$ in \eqref{eq:OC_w_VI}.
	Similarly, we test $e_{\gamma_k}(u_k, w_k) = 0$ by $u_k - u$.
	Adding both relations yield
	\begin{align*}
		0
		&\ge
		\dual{S u - y_d}{S (u - \tilde u)}_Y
		+
		\dual{S u_k - y_d}{S (u_k - u)}_Y
		\\&\qquad+
		\alpha \dual{u - w}{u - \tilde u}_{L^2(\Omega)}
		+
		\alpha \dual{u_k - w_k}{u_k - u}_{L^2(\Omega)}
		+
		\gamma_k \dual{\pi(u_k - u_a)}{u_k - u}_{L^2(\Omega)}
		\\
		&\ge
		\norm{S(u_k - u)}_Y^2 + \dual{S u - y_d}{S (u_k - \tilde u)}_{Y}
		\\&\qquad
		+
		\alpha \norm{u_k - u}_{L^2(\Omega)}^2
		+
		\alpha \dual{u - w}{u_k - \tilde u}_{L^2(\Omega)}
		+
		\alpha \dual{w - w_k}{u_k - u}_{L^2(\Omega)}
		.
	\end{align*}
	Here, we have used
	$\dual{\pi(u_k - u_a)}{u_k - u}_{L^2(\Omega)} \ge 0$,
	which follows from
	$\pi(u_k - u_a) = 0$ a.e.\ on $\set{u_k \ge u_a}$,
	whereas
	$\pi(u_k - u_a), u_k - u \le 0$ a.e.\ on $\set{u_k < u_a}$.
	Together with the weak convergence $u_k \weakly \tilde u$
	this shows
	\begin{align*}
		0
		&\ge
		\limsup_{k \to \infty}
		\parens*{\norm{S(u_k - u)}_Y^2 + \alpha\norm{u_k - u}_{L^2(\Omega)}^2}
		\ge
		0,
	\end{align*}
	i.e., $u_k \to u$ in $L^2(\Omega)$.
	Since the limit $u = \tilde u$ is independent of the chosen subsequence,
	a usual subsequence-subsequence argument shows the convergence
	of the entire sequence.
\end{proof}
Finally,
we can approximate \eqref{eq:IOC} by a regularized problem
to prove enhanced regularity of local minimizers.
\begin{theorem}
	\label{thm:enhanced_regularity}
	Let $(\bar u, \bar w) \in L^2(\Omega) \times H_0^1(\Omega)$ be 
	a local minimizer of \eqref{eq:IOC}.
	Then, $-\Delta \bar w \in L^2(\Omega)$.
\end{theorem}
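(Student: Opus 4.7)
The strategy is to regularize \eqref{eq:IOC} into a smooth unconstrained problem by combining the lower-level regularization \eqref{eq:OC_ud_gamma} with a Moreau--Yosida penalty of the obstacle $w \ge w_a$ together with an $L^2$-localization around $\bar w$, to derive the first-order optimality system of the regularized problem, and to read off the enhanced regularity of $\bar w$ from that system after passing to the limit $\gamma \to \infty$.

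Concretely, for $\gamma > 0$ I would consider
\begin{equation*}
	\min_{w \in H_0^1(\Omega)} J_\gamma(w) := F(T_\gamma(w), w) + \tfrac12 \norm{w - \bar w}_{L^2(\Omega)}^2 + \tfrac\gamma2 \norm{(w_a - w)_+}_{L^2(\Omega)}^2,
\end{equation*}
restricted to a closed $H_0^1$-ball around $\bar w$ on which $(\bar u, \bar w)$ is globally optimal for \eqref{eq:IOC}. Existence of a minimizer $\bar w_\gamma$ follows from the direct method, using coercivity of $\norm{\cdot}_{H_0^1(\Omega)}$, the compact embedding $H_0^1(\Omega) \embeds L^2(\Omega)$, and the continuity of $T_\gamma$ from \cref{lem:regularized_OC}. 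Using $\bar w$ itself as a competitor and invoking \cref{lem:convergence_reg} together with a standard obstacle-penalization argument yields $\bar w_\gamma \to \bar w$ in $H_0^1(\Omega)$ and $(w_a - \bar w_\gamma)_+ \to 0$ in $L^2(\Omega)$; in particular the ball constraint is inactive for $\gamma$ large, so $\bar w_\gamma$ is an unconstrained critical point of $J_\gamma$.

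Since $T_\gamma$ is Fréchet differentiable by \cref{lem:regularized_OC}, the equation $J_\gamma'(\bar w_\gamma) = 0$ unfolds into an adjoint state $p_\gamma \in L^2(\Omega)$ satisfying
\begin{equation*}
	\paren[\big]{S\adjoint S + \alpha I + \gamma \pi'(\bar u_\gamma - u_a)} p_\gamma = f'(\bar u_\gamma)
\end{equation*}
and the primal equation
\begin{equation*}
	-\Delta \bar w_\gamma + \zeta + (\bar w_\gamma - \bar w) + \alpha p_\gamma - \lambda_\gamma = 0 \quad \text{in } H^{-1}(\Omega), \quad \lambda_\gamma := \gamma (w_a - \bar w_\gamma)_+ \ge 0.
\end{equation*}
Because $\pi' \ge 0$, the operator on the left of the adjoint equation dominates $\alpha I$ on $L^2(\Omega)$, so $\norm{p_\gamma}_{L^2(\Omega)} \le \alpha^{-1}\norm{f'(\bar u_\gamma)}_{L^2(\Omega)}$ is uniformly bounded. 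For the penalty multiplier $\lambda_\gamma$, I would test the primal equation against $\varphi_\gamma := (w_a - \bar w_\gamma)_+$, which lies in $H_0^1(\Omega)$ by the hypothesis $\max(w_a,0) \in H_0^1(\Omega)$; integrating $\int \nabla \bar w_\gamma \cdot \nabla \varphi_\gamma$ by parts using $-\Delta w_a \in L^2(\Omega)$ produces
\begin{equation*}
	\gamma \norm{\varphi_\gamma}_{L^2(\Omega)}^2 + \norm{\nabla \varphi_\gamma}_{L^2(\Omega)^n}^2 \le \norm{-\Delta w_a + \zeta + (\bar w_\gamma - \bar w) + \alpha p_\gamma}_{L^2(\Omega)} \norm{\varphi_\gamma}_{L^2(\Omega)},
\end{equation*}
whence $\norm{\lambda_\gamma}_{L^2(\Omega)} = \gamma \norm{\varphi_\gamma}_{L^2(\Omega)} \le C$ uniformly in $\gamma$. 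Rearranging the primal equation then yields a uniform $L^2$-bound on $-\Delta \bar w_\gamma$; extracting a weak $L^2$-limit and matching it with the distributional limit of $-\Delta \bar w_\gamma$ obtained from $\bar w_\gamma \to \bar w$ in $H_0^1(\Omega)$ forces this weak limit to coincide with $-\Delta \bar w$, yielding $-\Delta \bar w \in L^2(\Omega)$.

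The main technical obstacle I expect is precisely this uniform $L^2$-bound on $\lambda_\gamma$: after testing against $(w_a - \bar w_\gamma)_+$, the integration by parts produces a term involving $\Delta w_a$, and keeping it in $L^2(\Omega)$ requires both standing hypotheses on the obstacle to be genuinely used, namely $\max(w_a,0) \in H_0^1(\Omega)$ (so the test function is admissible) and $-\Delta w_a \in L^2(\Omega)$ (so the right-hand side lies in $L^2(\Omega)$). Without this pair of assumptions, one only obtains an $H^{-1}$-bound on $\lambda_\gamma$, which would recover the starting regularity $\bar w \in H_0^1(\Omega)$ but not the desired promotion to $-\Delta \bar w \in L^2(\Omega)$.
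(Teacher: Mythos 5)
Your argument is correct, and it shares the paper's overall skeleton (regularize the lower level via \eqref{eq:OC_ud_gamma}, localize around $\bar w$ with a proximal term, bound the adjoint $p_\gamma$ by $\alpha^{-1}\norm{f'(u_\gamma)}_{L^2(\Omega)}$, pass to the limit), but it handles the decisive step differently. The paper keeps the constraint $w \ge w_a$ explicitly in the regularized problem, so the first-order condition is a variational inequality over $\TT_{\Wad}(w_k)$; after passing to the limit it obtains an obstacle problem for $\bar w$ with $L^2(\Omega)$ data $\zeta - p$ and then \emph{cites} the obstacle-problem regularity result \cite[Theorem~4.32]{Troianiello1987} (a Lewy--Stampacchia-type estimate, which is exactly where $-\Delta w_a \in L^2(\Omega)$ enters) to conclude $-\Delta\bar w \in L^2(\Omega)$. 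You instead penalize the obstacle by Moreau--Yosida, obtain an equation with an explicit multiplier $\lambda_\gamma = \gamma(w_a - \bar w_\gamma)_+$, and prove the uniform $L^2(\Omega)$ bound on $\lambda_\gamma$ yourself by testing with $(w_a - \bar w_\gamma)_+$; this is in effect a self-contained proof of the Lewy--Stampacchia-type bound at the regularized level, so the regularity of $\bar w$ drops out of the uniform bound on $-\Delta\bar w_\gamma$ without any citation. Two remarks on the differences: your $L^2$-proximal term (instead of the paper's $H_0^1$-term) still yields $\bar w_\gamma \to \bar w$ strongly in $H_0^1(\Omega)$, because the objective $F$ already contains $\tfrac12\norm{w}_{H_0^1(\Omega)}^2$ and the usual liminf/limsup comparison upgrades weak to norm convergence, so the ball constraint is indeed eventually inactive; and the admissibility of the test function $(w_a - \bar w_\gamma)_+ \in H_0^1(\Omega)$ rests on the sandwich property $0 \le (w_a - \bar w_\gamma)_+ \le \max(w_a,0) + \abs{\bar w_\gamma}$ together with $\max(w_a,0) \in H_0^1(\Omega)$, i.e.\ the same order-ideal argument the paper itself uses in \cref{lem:l2_closure}, so you correctly identified where both hypotheses on $w_a$ are consumed. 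What each route buys: the paper's is shorter given the cited theorem and never touches the penalty multiplier; yours is more self-contained, gives an explicit quantitative bound $\norm{-\Delta\bar w}_{L^2(\Omega)} \le C$ in terms of $\zeta$, $-\Delta w_a$ and $\alpha^{-1}\norm{f'(\bar u)}_{L^2(\Omega)}$, and produces along the way the limiting multipliers $(p,\lambda)$ that point toward the C-stationarity system the paper deliberately avoids deriving.
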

\begin{proof}
	We denote by $r > 0$ the radius of optimality
	and consider the regularized problem
	\begin{equation*}
		\label{eq:IOC_k}
		\tag{IOC$_k$}
		\begin{minproblem}
			[u \in L^2(\Omega), w \in H_0^1(\Omega)]
			{F(u, w) + \frac12 \norm{w - \bar w}_{H_0^1(\Omega)}^2}
			& e_{\gamma_k}(u, w) = 0, \\
			& w \ge w_a, \\
			& (u,w) \in B_r( (\bar u, \bar w) ).
		\end{minproblem}
	\end{equation*}
	Here, $\seq{\gamma_k}_{k \in \N} \subset \R^+$ with $\gamma_k \to \infty$
	is arbitrary.
	By arguing as in \cref{lem:existence_IOC},
	we obtain the existence of global solutions $(u_k, w_k)$ for this problem.
	Due to boundedness, we can extract a weakly convergent subsequence (without relabeling)
	with $(u_k, w_k) \weakly (\tilde u, \tilde w)$ in $L^2(\Omega) \times H_0^1(\Omega)$.
	Due to the compact embedding $H_0^1(\Omega) \embeds L^2(\Omega)$
	and \cref{lem:convergence_reg},
	we find
	$\tilde u = T(\tilde w)$
	and $u_k \to \tilde u$ in $L^2(\Omega)$.
	For $k$ large enough, $(T_{\gamma_k}(\bar w), \bar w)$ is feasible for
	\eqref{eq:IOC_k}, see \cref{lem:convergence_reg}.
	Thus,
	\begin{equation*}
		F(u_k, w_k) + \frac12 \norm{w_k - \bar w}_{H_0^1(\Omega)}^2
		\le
		F(T_{\gamma_k}(\bar w), \bar w).
	\end{equation*}
	Using the sequential weak lower semicontinuity of $F$
	and again \cref{lem:convergence_reg}
	yields
	\begin{align*}
		F(\tilde u, \tilde w)
		+
		\frac12 \norm{\tilde w - \bar w}_{H_0^1(\Omega)}^2
		&
		\le
		\liminf_{k \to \infty}
		F(u_k, w_k)
		+
		\frac12
		\limsup_{k \to \infty}
		\norm{w_k - \bar w}_{H_0^1(\Omega)}^2
		\\&
		\le
		\limsup_{k \to \infty}\parens*{
			F(u_k, w_k) + \frac12 \norm{w_k - \bar w}_{H_0^1(\Omega)}^2
		}
		\le
		F(\bar u, \bar w)
		.
	\end{align*}
	Since $(\bar u, \bar w)$ is a local minimizer
	and $(\tilde u, \tilde w) = (T(\tilde w), \tilde u) \in B_r( (\bar u, \bar w))$,
	$(\tilde u, \tilde w) = (\bar u, \bar w)$
	as well as
	$w_k \to \bar w$ in $H_0^1(\Omega)$
	follows.
	Thus, for $k$ large enough,
	the last constraint in \eqref{eq:IOC_k}
	is not binding
	and, consequently, $w_k$ is a local solution of the reduced problem
	\begin{equation*}
		\begin{aligned}
			\min_{w \in \Wad} 
			&\quad F(T_{\gamma_k}(w), w) + \frac12 \norm{w - \bar w}_{H_0^1(\Omega)}^2.
		\end{aligned}
	\end{equation*}
	Due to the Fréchet differentiability of $T_{\gamma_k}$
	from \cref{lem:regularized_OC}, $w_k$ satisfies
	the necessary optimality condition
	\begin{equation*}
		F'(u_k, w_k) (T_{\gamma_k}'(w_k) h, h)
		+
		\dual{w_k - \bar w}{h}_{H_0^1(\Omega)}
		\ge
		0
		\qquad\forall h \in \TT_{\Wad}(w_k).
	\end{equation*}
	Now,
	we introduce the adjoint $p_k \in L^2(\Omega)$
	as the unique solution of
	\begin{equation*}
		(\alpha + S\adjoint S) p_k + \gamma_k \pi'(u_k - u_a) p_k = -\alpha \, f'(u_k).
	\end{equation*}
	Testing with $p_k$ shows that $p_k$
	is bounded in $L^2(\Omega)$ uniformly w.r.t.\ $k \in \N$,
	since
	$f'(u_k) \to f'(\bar u)$.
	Next,
	we test the equation for $p_k$
	with $v := T'_{\gamma_k}(w_k) h$
	and
	\eqref{eq:derivative_T_gamma}
	with $p_k$.
	This gives
	\begin{align*}
		- \alpha \dual{f'(u_k)}{ T'_{\gamma_k}(w_k) h}_{L^2(\Omega)}
		&=
		\dual{(\alpha + S\adjoint S) p_k + \gamma_k \pi'(u_k - u_a) p_k}{ v }_{L^2(\Omega)}
		\\&=
		\dual{ p_k }{(\alpha + S\adjoint S) v + \gamma_k \pi'(u_k - u_a) v}_{L^2(\Omega)}
		\\&=
		\alpha \dual{p_k}{h}_{L^2(\Omega)}
		.
	\end{align*}
	Thus,
	the above optimality condition becomes
	\begin{equation*}
		\dual{\zeta-p_k}{h}_{L^2(\Omega)}
		+
		\dual{2 w_k - \bar w}{h}_{H_0^1(\Omega)}
		\ge
		0
		\qquad\forall h \in \TT_{\Wad}(w_k)
		.
	\end{equation*}
	Up to a subsequence, we have $p_k \weakly p$ in $L^2(\Omega)$.
	Due to $w_k \to \bar w$ in $H_0^1(\Omega)$,
	we can pass to the limit and obtain
	\begin{equation*}
		\dual{\zeta-p}{h}_{L^2(\Omega)}
		+
		\dual{\bar w}{h}_{H_0^1(\Omega)}
		\ge
		0
		\qquad\forall h \in \TT_{\Wad}(\bar w)
		.
	\end{equation*}
	This is an
	obstacle problem for $\bar w$ in $H_0^1(\Omega)$.
	We are going to apply the regularity result
	\cite[Theorem~4.32]{Troianiello1987}.
	Since we are dealing with the space $H_0^1(\Omega)$,
	we can choose $\Gamma = \emptyset$ therein.
	Thus, the requirement that $\Gamma$ is of class $C^1$
	is trivially satisfied.
	Since $\zeta - p \in L^2(\Omega)$
	and $-\Delta w_a \in L^2(\Omega)$,
	the application of
	\cite[Theorem~4.32]{Troianiello1987}
	yields
	\begin{equation*}
		\dualh{p-\zeta}{h}
		\leq \dualh{-\Delta\bar w}{h}
		\leq \dualh{\max(p-\zeta,-\Delta w_a)}{h}
	\end{equation*}
	for all $h\in H_0^1(\Omega)$ with $h \ge 0$.
	Due to $p-\zeta,-\Delta w_a\in L^2(\Omega)$
	this implies $-\Delta\bar w\in L^2(\Omega)$.
\end{proof}
We remark that $-\Delta \bar w \in L^2(\Omega)$
has to be understood in the distributional sense,
i.e.,
\begin{equation}
	\label{eq:distributional_laplacian}
	\dual{-\Delta \bar w}{\varphi}_{L^2(\Omega)}
	=
	\dual{\bar w}{\varphi}_{H_0^1(\Omega)}
\end{equation}
holds for all $\varphi \in C_c^\infty(\Omega)$.
Since both sides of the equation are continuous w.r.t.\ the $H_0^1(\Omega)$-norm of $\varphi$,
it even holds for all $\varphi \in H_0^1(\Omega) = \cl_{H_0^1(\Omega)}(C_c^\infty(\Omega))$.

We further note that we do not 
derive an equation
for the limit $p$ in the proof of \cref{thm:enhanced_regularity}.
We expect that this would lead to a system of C-stationarity.

\subsection{Linearization and M-stationarity}
\label{subsec:linearize_and_M}
After we have proved the regularity $-\Delta\bar w \in L^2(\Omega)$
in \cref{thm:enhanced_regularity},
we investigate
optimality conditions for \eqref{eq:IOC}.
We start by rewriting the VI \eqref{eq:OC_w_VI},
which characterizes the solution mapping $T$ of \eqref{eq:OC_w}.
To this end, we define the coercive operator
$A \in \LL(L^2(\Omega), L^2(\Omega))$,
\begin{equation*}
	A := \alpha \id + S\adjoint S,
\end{equation*}
and, consequently, the associated inner product
$\dual{\cdot\,}{\cdot}_A := \dual{A \, \cdot \,}{\cdot}_{L^2(\Omega)}$
is equivalent to the original inner product on $L^2(\Omega)$.
Using
\begin{equation*}
	S\adjoint(S u - y_d) + \alpha (u - w)
	=
	A \parens*{
		u - A^{-1}( S\adjoint y_d + \alpha w )
	}
	,
\end{equation*}
the VI \eqref{eq:OC_w_VI}
can be written as
\begin{equation*}
	\dual[\big]{
		u - A^{-1}( S\adjoint y_d + \alpha w )
	}{v - u}_{A}
	\ge
	0
	\qquad
	\forall v \in \Uad.
\end{equation*}

This is, in turn,
equivalent to
\begin{equation}
	\label{eq:lower-level_projection}
	u = \Proj_{\Uad}^A\parens*{
		A^{-1}( S\adjoint y_d + \alpha w )
	},
\end{equation}
where $\Proj_{\Uad}^A$ denotes the orthogonal projection
onto $\Uad$ w.r.t.\ the inner product $A$.
This representation of $T$
can be used to obtain the directional differentiability
of $T$ in a very convenient way.

\begin{lemma}
	\label{lem:differentiability_T}
	The solution mapping $T \colon L^2(\Omega) \to L^2(\Omega)$
	of \eqref{eq:OC_w}
	is directionally differentiable.
	For $\bar w,w \in L^2(\Omega)$,
	the directional derivative $z = T'(\bar w; w)$
	is characterized by the variational inequality
	\begin{equation}
		\label{eq:VI_direc_deriv_T}
		z \in \KK(\bar w),
		\qquad
		\dual[\big]{S\adjoint S z + \alpha (z - w)}{z - v}_{L^2(\Omega)}
		\le
		0
		\qquad
		\forall v \in \KK(\bar w).
	\end{equation}
	Here,
	\begin{equation*}
		\KK(\bar w)
		:=
		\set{
			v \in \TT_{\Uad}(\bar u) \given 
			\dual[\big]{S\adjoint(S \bar u - y_d) + \alpha (\bar u - \bar w)}{v}_{L^2(\Omega)}
			=
			0
		}
	\end{equation*}
	is the critical cone with $\bar u = T(\bar w)$.
	The mapping $L^2(\Omega) \ni w \mapsto T'(\bar w; w) \in L^2(\Omega)$
	is Lipschitz with constant $1$.
\end{lemma}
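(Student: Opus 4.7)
The plan is to exploit the projection formula \eqref{eq:lower-level_projection}, which realizes $T$ as the composition $T(w) = P \circ \phi(w)$, where $\phi \colon L^2(\Omega) \to L^2(\Omega)$, $\phi(w) := A^{-1}(S\adjoint y_d + \alpha w)$, is a continuous affine linear map with derivative $\phi'(\bar w) = \alpha A^{-1}$, and $P$ denotes the metric projection onto $\Uad$ in the Hilbert space $(L^2(\Omega), \dual{\cdot}{\cdot}_A)$. By the chain rule it therefore suffices to show that $P$ is (Hadamard) directionally differentiable at $\phi(\bar w)$ and to translate the outcome back to the $L^2(\Omega)$ inner product.

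Directional differentiability of $P$ would be obtained from the classical theory of polyhedric convex sets due to Mignot and Haraux: since $\Uad = \set{v \in L^2(\Omega) \given v \ge u_a}$ is defined by a pointwise unilateral constraint, it is polyhedric in $L^2(\Omega)$, so that $P$ is directionally differentiable with $P'(\phi(\bar w); \cdot)$ equal to the $A$-orthogonal projection onto the critical cone $\widetilde{\KK} := \TT_{\Uad}(\bar u) \cap (\phi(\bar w) - \bar u)^{\perp_A}$ at $\bar u = T(\bar w)$. Using the self-adjointness of $A$ together with the identity $A(\bar u - \phi(\bar w)) = S\adjoint(S \bar u - y_d) + \alpha (\bar u - \bar w)$, the $A$-orthogonality against $\phi(\bar w) - \bar u$ translates into $L^2$-orthogonality against $S\adjoint(S \bar u - y_d) + \alpha (\bar u - \bar w)$, so that $\widetilde{\KK} = \KK(\bar w)$.

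Combining the chain rule with these identifications, $z := T'(\bar w; w)$ is characterized as the $A$-projection of $\alpha A^{-1} w$ onto $\KK(\bar w)$, i.e., $z \in \KK(\bar w)$ together with $\dual{A z - \alpha w}{v - z}_{L^2(\Omega)} \ge 0$ for every $v \in \KK(\bar w)$; expanding $A = \alpha \id + S\adjoint S$ reproduces \eqref{eq:VI_direc_deriv_T}. For the Lipschitz estimate, let $z_i := T'(\bar w; w_i)$, $i = 1, 2$. Testing \eqref{eq:VI_direc_deriv_T} for $z_1$ with $v = z_2$ and for $z_2$ with $v = z_1$ and adding yields $\norm{S(z_1 - z_2)}_Y^2 + \alpha \norm{z_1 - z_2}_{L^2(\Omega)}^2 \le \alpha \dual{w_1 - w_2}{z_1 - z_2}_{L^2(\Omega)}$, and Cauchy--Schwarz then gives $\norm{z_1 - z_2}_{L^2(\Omega)} \le \norm{w_1 - w_2}_{L^2(\Omega)}$. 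The only delicate ingredient is the polyhedricity of $\Uad$ in $L^2(\Omega)$, which is classical and can be invoked directly; everything else is bookkeeping in the $A$-inner product.
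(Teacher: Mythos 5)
Your proposal is correct and follows essentially the same route as the paper: both rely on the polyhedricity of $\Uad$ and the Mignot--Haraux differentiability of the metric projection in the equivalent $A$-inner product, identify the critical cone via $A(\bar u - A^{-1}(S\adjoint y_d + \alpha\bar w)) = S\adjoint(S\bar u - y_d) + \alpha(\bar u - \bar w)$, and obtain the Lipschitz constant $1$ by testing \eqref{eq:VI_direc_deriv_T} twice and adding. Your write-up merely makes the chain-rule decomposition $T = P\circ\phi$ and the final testing argument more explicit than the paper does.
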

\begin{proof}
	It is well known that the set $\Uad$ is a polyhedric subset of $L^2(\Omega)$,
	see, e.g., \cite[Example~4.21~(1)]{Wachsmuth2016:2}.
	It follows,
	that the solution mapping $T$ of \eqref{eq:lower-level_projection}
	is directionally differentiable
	and the directional derivative $z = T'(\bar w; w)$
	satisfies
	\begin{equation*}
		z \in \KK(\bar w),
		\qquad
		\dual[\big]{
			z - A^{-1}( \alpha w )
		}{z - v}_{A}
		\le
		0
		\qquad
		\forall v \in \KK(\bar w),
	\end{equation*}
	where the critical cone $\KK(\bar w)$ is given by
	\begin{equation*}
		\KK(\bar w)
		:=
		\set{
			v \in \TT_{\Uad}(\bar u) \given 
			\dual[\big]{\bar u - A^{-1}( S\adjoint y_d + \alpha \bar w )}{v}_{A}
			=
			0
		}
		,
	\end{equation*}
	see
	\cite[Théorème~2.1]{Mignot1976}, \cite[Theorem~2]{Haraux1977}, \cite[Theorem~5.2]{Wachsmuth2016:2}.
	Utilizing the definition of $A$, the VI \eqref{eq:VI_direc_deriv_T} follows.
	The Lipschitz continuity follows by a suitable testing of \eqref{eq:VI_direc_deriv_T}.
\end{proof}
Using the extra regularity of local minimizers from \cref{thm:enhanced_regularity}
together with the differentiability of $T$ from the previous lemma,
we are in position to prove an optimality system of B-stationarity.
\begin{lemma}
	\label{lem:B_stat}
	Let $(\bar u, \bar w) \in L^2(\Omega) \times H_0^1(\Omega)$ be a local minimizer of \eqref{eq:IOC}.
	Then,
	\begin{equation}
		\label{eq:B_stat_dense}
		\dual{f'(\bar u)}{T'(\bar w; w)}_{L^2(\Omega)}
		+
		\dual{-\Delta \bar w + \zeta}{w}_{L^2(\Omega)}
		\ge
		0
		\qquad\forall w \in \cl_{L^2(\Omega)} \TT_{\Wad}(\bar w).
	\end{equation}
	Here, $\TT_{\Wad}(\bar w)$ is the tangent cone of $\Wad \subset H_0^1(\Omega)$
	at $\bar w$ w.r.t.\ the $H_0^1(\Omega)$-topology.
\end{lemma}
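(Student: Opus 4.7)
The plan is to reduce \eqref{eq:IOC} to a problem in $w$ alone by eliminating $u=T(w)$, write down the classical B-stationarity condition for the reduced problem using the directional derivative formula for $T$ from \cref{lem:differentiability_T}, and then convert the resulting $H_0^1(\Omega)$-inner product into an $L^2(\Omega)$-inner product via the regularity $-\Laplace \bar w \in L^2(\Omega)$ from \cref{thm:enhanced_regularity}. A final continuity argument will pass from $\TT_{\Wad}(\bar w)$ to its $L^2(\Omega)$-closure.

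First, I would observe that $\bar w$ is a local minimizer (w.r.t.\ the $H_0^1(\Omega)$-topology) of the reduced functional $\Phi(w) := F(T(w), w)$ over $\Wad$, since $T$ is Lipschitz from $L^2(\Omega)$ to $L^2(\Omega)$ and $H_0^1(\Omega) \embeds L^2(\Omega)$ continuously. Combining the Lipschitz directional differentiability of $T \colon L^2(\Omega) \to L^2(\Omega)$ from \cref{lem:differentiability_T} with the Fréchet differentiability of $F$ gives, via a Hadamard-type chain rule, that $\Phi$ is directionally differentiable with
\begin{equation*}
	\Phi'(\bar w; h)
	= \dual{f'(\bar u)}{T'(\bar w; h)}_{L^2(\Omega)}
	+ \dual{\bar w}{h}_{H_0^1(\Omega)}
	+ \dual{\zeta}{h}_{L^2(\Omega)}
	.
\end{equation*}
Since $\Wad$ is convex, the standard B-stationarity argument then yields $\Phi'(\bar w; h) \geq 0$ for every $h \in \TT_{\Wad}(\bar w)$.

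Second, I would invoke \cref{thm:enhanced_regularity} to obtain $-\Laplace \bar w \in L^2(\Omega)$ and use \eqref{eq:distributional_laplacian} (valid for all $h \in H_0^1(\Omega)$ by density of $C_c^\infty(\Omega)$) to rewrite $\dual{\bar w}{h}_{H_0^1(\Omega)} = \dual{-\Laplace \bar w}{h}_{L^2(\Omega)}$. Substituting into the previous inequality gives exactly \eqref{eq:B_stat_dense} but only for $h \in \TT_{\Wad}(\bar w)$.

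Finally, I would extend the inequality to all of $\cl_{L^2(\Omega)}\TT_{\Wad}(\bar w)$ by continuity: the Lipschitz property of $T'(\bar w; \cdot)$ on $L^2(\Omega)$ from \cref{lem:differentiability_T} makes $h \mapsto \dual{f'(\bar u)}{T'(\bar w; h)}_{L^2(\Omega)}$ continuous in the $L^2(\Omega)$-topology, and $h \mapsto \dual{-\Laplace \bar w + \zeta}{h}_{L^2(\Omega)}$ is trivially $L^2(\Omega)$-continuous since $-\Laplace \bar w + \zeta \in L^2(\Omega)$. The main obstacle I anticipate is justifying the chain rule rigorously, since $T$ is only directionally (not Gâteaux) differentiable; however, the $L^2(\Omega)$-Lipschitz continuity of $T$, together with the fact that the perturbations along tangent directions are controlled in $H_0^1(\Omega) \embeds L^2(\Omega)$, makes this a standard Hadamard chain rule computation. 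The density step is comparatively routine once the continuity of both summands in the $L^2(\Omega)$-topology has been noted.
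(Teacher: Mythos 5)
Your proposal is correct and follows essentially the same route as the paper: reduce to the problem in $w$ over $\Wad$, obtain B-stationarity via the Lipschitz (Hadamard-type) directional differentiability of $T$ from \cref{lem:differentiability_T}, rewrite $\dual{\bar w}{\cdot}_{H_0^1(\Omega)}$ using $-\Delta\bar w \in L^2(\Omega)$ and \eqref{eq:distributional_laplacian}, and pass to $\cl_{L^2(\Omega)}\TT_{\Wad}(\bar w)$ by the $L^2(\Omega)$-continuity of both terms. Your writeup merely spells out the chain-rule and density details that the paper leaves implicit.
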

\begin{proof}
	Using the continuous solution operator $T$ of \eqref{eq:OC_w},
	$\bar w$ is a local solution to the reduced problem
	\begin{equation*}
		\begin{aligned}
			\min_{w \in \Wad} &\quad F(T(w), w) = f(T(w)) + \frac12 \norm{w}_{H_0^1(\Omega)}^2 + \dual{\zeta}{w}_{L^2(\Omega)}
			.
		\end{aligned}
	\end{equation*}
	Using the differentiability and Lipschitz continuity of $T$,
	this implies
	\begin{equation*}
		\dual{f'(\bar u)}{T'(\bar w; w)}_{L^2(\Omega)}
		+
		\dual{\bar w}{w}_{H_0^1(\Omega)}
		+
		\dual{\zeta}{w}_{L^2(\Omega)}
		\ge
		0
		\qquad\forall w \in \TT_{\Wad}(\bar w).
	\end{equation*}
	Finally, we use \eqref{eq:distributional_laplacian}
	and the continuity of $L^2(\Omega) \ni w \mapsto T'(\bar w; w) \in L^2(\Omega)$
	(see \cref{lem:differentiability_T})
	to arrive at \eqref{eq:B_stat_dense}.
\end{proof}
The next lemma characterizes the $L^2(\Omega)$-closure
of the $H_0^1(\Omega)$-tangent cone $\TT_{\Wad}(\bar w)$.
\begin{lemma}
	\label{lem:l2_closure}
	For all $\bar w \in \Wad$ we have
	\begin{equation*}
		\cl_{L^2}\TT_{\Wad}(\bar w)
		=
		\set{ v \in L^2(\Omega) \given v \ge 0 \aeon \set{\bar w = w_a} }
		.
	\end{equation*}
\end{lemma}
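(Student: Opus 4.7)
The plan is to prove both set inclusions, with the $\supset$-direction requiring the main constructive effort. Throughout, abbreviate $M := \set{\bar w = w_a}$ and $g := \bar w - w_a$, so that $g \in H^1(\Omega)$ with $g \ge 0$ a.e.\ and $M = \set{g = 0}$ up to a null set.

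For the inclusion $\cl_{L^2(\Omega)}\TT_{\Wad}(\bar w) \subset \set{v \in L^2(\Omega) \given v \ge 0 \aeon M}$, I would use that every $v \in \TT_{\Wad}(\bar w)$ is an $H_0^1(\Omega)$-limit of radial feasible directions $t^{-1}(w - \bar w)$ with $w \in \Wad$ and $t > 0$, each satisfying $w - \bar w \ge w_a - \bar w = 0$ a.e.\ on $M$. Passing to a subsequence converging a.e.\ carries this sign property through the $H_0^1$-limit; a further such subsequence preserves it through the outer $L^2$-closure, giving the claimed inclusion.

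For the reverse inclusion, let $v \in L^2(\Omega)$ with $v \ge 0$ a.e.\ on $M$, and decompose $v = v^+ - v^-$; the hypothesis gives $v^- = 0$ a.e.\ on $M$. Approximate $v^+$ by non-negative $\psi_m \in C_c^\infty(\Omega)$ with $\psi_m \to v^+$ in $L^2(\Omega)$. For the harder negative part, the key construction is $\phi_k \cdot \min(n g, 1)$, where $\phi_k \in C_c^\infty(\Omega)$ with $\phi_k \ge 0$ is a standard $L^2$-approximation of $v^-$. This product has compact support in $\Omega$ and lies in $H^1(\Omega)$ by the product rule (using $\min(n g, 1) \in H^1(\Omega) \cap L^\infty(\Omega)$), so it belongs to $H_0^1(\Omega)$; moreover, it vanishes on $M$ because $g$ does. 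Dominated convergence yields $\phi_k \min(n g, 1) \to \phi_k \chi_{\Omega \setminus M}$ in $L^2(\Omega)$ as $n \to \infty$, and $\phi_k \chi_{\Omega \setminus M} \to v^- \chi_{\Omega \setminus M} = v^-$ in $L^2(\Omega)$ as $k \to \infty$. A diagonal subsequence thus produces $\phi_{k(n)} \min(n g, 1) \to v^-$ in $L^2(\Omega)$.

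The candidate approximation is $u_n := \psi_{m(n)} - \phi_{k(n)} \min(n g, 1) \in H_0^1(\Omega)$ with $u_n \to v$ in $L^2(\Omega)$. It remains to show $u_n \in \TT_{\Wad}(\bar w)$, which I would do by checking the stronger radial property $\bar w + t u_n \ge w_a$ a.e.\ for some $t > 0$, equivalently $g + t u_n \ge 0$ a.e. On $M$ this reduces to $t \psi_{m(n)} \ge 0$; on $\set{g \ge 1/n}$ the bound $\phi_{k(n)} \min(n g, 1) \le \norm{\phi_{k(n)}}_{L^\infty(\Omega)}$ together with $g \ge 1/n$ succeeds for $t \le (n \norm{\phi_{k(n)}}_{L^\infty(\Omega)})^{-1}$; and on $\set{0 < g < 1/n}$ the identity $\min(n g, 1) = n g$ gives the factorization $g(1 - t n \phi_{k(n)}) + t \psi_{m(n)} \ge 0$ under the same smallness condition. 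I expect the main obstacle to be precisely this combined density-plus-radial-feasibility verification, namely producing $H_0^1$-approximants to $v^-$ that both vanish on the possibly irregular measurable set $M$ and admit a positive scaling keeping $\bar w + t u_n$ inside $\Wad$ near $M$; the resolution is to exploit $g$ itself as a canonical $H^1$-cut-off, since $g$ vanishes at exactly the linear rate needed to balance the radial feasibility constraint.
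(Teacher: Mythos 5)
Your proof is correct, and on the nontrivial inclusion it takes a genuinely different route from the paper. For the direction in which one must show that every $v \in L^2(\Omega)$ with $v \ge 0$ a.e.\ on $\{\bar w = w_a\}$ lies in the closure, the paper quotes the known description of the $L^2(\Omega)$-tangent cone of the larger obstacle set $\{w \in L^2(\Omega) : w \ge w_a \text{ a.e.}\}$, takes a radial approximant $t(w-\bar w)$ from that set, replaces $w$ by a nearby $\psi \in H_0^1(\Omega)$ and restores feasibility by the truncation $\tilde w = \max(\psi, w_a)$; it is exactly there that the standing assumption $\max(w_a,0)\in H_0^1(\Omega)$ enters, to guarantee $\tilde w \in H_0^1(\Omega)$. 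You instead construct feasible directions explicitly: splitting $v = v^+ - v^-$, approximating both parts by nonnegative smooth functions, and damping the negative part near the contact set with the cut-off $\min(n(\bar w - w_a),1) \in H^1(\Omega)\cap L^\infty(\Omega)$, which vanishes precisely where $\bar w = w_a$; the scaling $t \le (n\|\phi_{k(n)}\|_{L^\infty(\Omega)})^{-1}$ then makes $\bar w + t u_n$ admissible, so each $u_n$ is a radial (hence tangent) direction and $u_n \to v$ in $L^2(\Omega)$. Your variant is longer but self-contained (no appeal to the tangent-cone formula for the auxiliary $L^2$-obstacle set) and, notably, it never uses the hypothesis $\max(w_a,0)\in H_0^1(\Omega)$ --- only $\bar w \in \Wad$, so that $\bar w - w_a$ is a nonnegative $H^1(\Omega)$ function serving as a canonical cut-off; the paper's proof is shorter once those two ingredients are granted. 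The easy inclusion is handled essentially identically in both proofs (feasible directions are nonnegative a.e.\ on the contact set, and this survives the $H_0^1(\Omega)$- and $L^2(\Omega)$-closures), and the small points you leave implicit (existence of nonnegative $C_c^\infty$ approximations of $v^\pm$, membership of compactly supported $H^1(\Omega)$ products in $H_0^1(\Omega)$, the diagonal selection) are routine.
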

\begin{proof}
	We denote the set on the right-hand side by $L$.
	Due to $\TT_{\Wad}(\bar u) = \cl_{H_0^1(\Omega)} \cone (\Wad - \bar w)$,
	the inclusion ``$\subset$''
	follows from $\Wad - \bar w \subset L$,
	since $L$ is a closed convex cone in $L^2(\Omega)$.

	Now, let $v \in L$ be given.
	Note that $L$ is the $L^2(\Omega)$-tangent cone of
	\begin{equation*}
		\expandafter\hat \Wad := \set{w \in L^2(\Omega) \given w \ge w_a\aeon\Omega}.
	\end{equation*}
	Thus, for each $\varepsilon > 0$, there is $t > 0$ and $w \in \expandafter\hat \Wad$,
	such that
	$\norm{v - t \, (w - \bar w)}_{L^2(\Omega)} \le \varepsilon$.
	Due to the density of the embedding $H_0^1(\Omega) \embeds L^2(\Omega)$,
	there exists $\psi \in H_0^1(\Omega)$
	with $\norm{\psi - w}_{L^2(\Omega)} \le \varepsilon / t$.
	We define $\tilde w := \max(\psi, w_a)$.
	Then, we have $\tilde w \in H_0^1(\Omega)$,
	since
	$\max(0, w_a) \le \max(\psi, w_a) \le \abs{\psi} + \max(0,w_a)$
	and $\psi, \max(0,w_a) \in H_0^1(\Omega)$.
	In particular, $\tilde w \in \Wad$.
	Moreover,
	$\abs{\tilde w - w} = \abs{\max(\psi - w, w_a - w)} \le \abs{\psi - w}$
	implies
	$\norm{\tilde w - w}_{L^2(\Omega)} \le \norm{\psi - w}_{L^2(\Omega)} \le \varepsilon / t$.
	Thus,
	$\norm{v - t \, (\tilde w - \bar w)}_{L^2(\Omega)} \le 2\varepsilon$.
	Since $\varepsilon > 0$ was arbitrary,
	this shows
	$v \in \cl_{L^2(\Omega)} \TT_{\Wad}(\bar w)$.
\end{proof}
We remark that one has
\begin{equation*}
	\TT_{\Wad}(\bar w)
	=
	\set{ v \in H_0^1(\Omega) \given v \ge 0 \text{ q.e.\ on } \set{\bar w = w_a} }
\end{equation*}
and this characterization requires the tools of capacity theory,
see
\cite[Lemma~3.2]{Mignot1976}.
Note that the above proof does not use
this form of $\TT_{\Wad}(\bar w)$.

The next lemma shows that local optimality implies
that $(0,0,0)$ is a local minimizer of a linearized problem
similar to \eqref{eq:mpcc2}.

\begin{lemma}
	\label{lem:linearized_OC}
	Let $(\bar u, \bar w) \in L^2(\Omega) \times H_0^1(\Omega)$ be a local minimizer of \eqref{eq:IOC}
	and let
	$\bar\xi := S\adjoint(S \bar u - y_d) + \alpha (\bar u - \bar w)$.
	Then, $(0,0,0) \in L^2(\Omega)^3$
	is a (global) minimizer of
	\begin{equation}
		\label{eq:linearized_OC}
		\begin{minproblem}
			[u, w, \xi \in L^2(\Omega)]
			{\dual{f'(\bar u)}{u}_{L^2(\Omega)} + \dual{-\Delta \bar w + \zeta}{w}_{L^2(\Omega)}}
			& (\alpha + S\adjoint S) u - \alpha w - \xi = 0, \\
			& w \ge 0 \aeon \set{\bar w = w_a}, \\
			& u = 0 \aeon \set{\bar\xi > 0 }, \\
			& \xi = 0 \aeon \set{\bar u > u_a }, \\
			& 0 \le u \perp \xi \ge 0 \aeon \set{ \bar u = u_a, \bar\xi = 0}.
		\end{minproblem}
	\end{equation}
\end{lemma}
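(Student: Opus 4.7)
The plan is to show that every feasible triple $(u, w, \xi)$ of \eqref{eq:linearized_OC} automatically satisfies $u = T'(\bar w; w)$. Once this identification is in hand, the objective of \eqref{eq:linearized_OC} on the feasible set coincides with the left-hand side of the B-stationarity inequality \eqref{eq:B_stat_dense}, which is non-negative by \cref{lem:B_stat}. The applicability of \cref{lem:B_stat} rests on the enhanced regularity $-\Delta \bar w \in L^2(\Omega)$ from \cref{thm:enhanced_regularity}, so that $\dual{-\Delta \bar w + \zeta}{w}_{L^2(\Omega)}$ is well defined for $w \in L^2(\Omega)$. Since $(0,0,0)$ is trivially feasible with objective value zero, global optimality of $(0,0,0)$ follows.

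To prepare, I would make the critical cone $\KK(\bar w)$ from \cref{lem:differentiability_T} explicit. Writing out \eqref{eq:OC_w_VI} at $(\bar u, \bar w)$ gives the pointwise complementarity $\bar u \ge u_a$, $\bar\xi \ge 0$, and $\bar\xi(\bar u - u_a) = 0$ a.e.\ on $\Omega$; in particular, $\bar\xi = 0$ a.e.\ on $\set{\bar u > u_a}$, whence $\set{\bar\xi > 0} \subset \set{\bar u = u_a}$. Combining this with $v \ge 0$ a.e.\ on $\set{\bar u = u_a}$ (the tangent-cone condition) and $\dual{\bar\xi}{v}_{L^2(\Omega)} = 0$ (the critical-cone condition), I obtain
\begin{equation*}
	\KK(\bar w)
	=
	\set{v \in L^2(\Omega) \given v = 0 \aeon \set{\bar\xi > 0},\; v \ge 0 \aeon \set{\bar u = u_a, \bar\xi = 0}},
\end{equation*}
which matches exactly the pointwise system imposed on $u$ in \eqref{eq:linearized_OC}. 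Moreover, by \cref{lem:l2_closure}, the remaining upper-level constraint $w \ge 0$ a.e.\ on $\set{\bar w = w_a}$ is equivalent to $w \in \cl_{L^2(\Omega)} \TT_{\Wad}(\bar w)$, which is precisely the admissible set appearing in \eqref{eq:B_stat_dense}.

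Next I would verify the VI \eqref{eq:VI_direc_deriv_T} for $u$. Substituting the equality constraint $\xi = (\alpha + S\adjoint S) u - \alpha w$ reduces the task to proving $\dual{\xi}{u - v}_{L^2(\Omega)} \le 0$ for every $v \in \KK(\bar w)$. I would split $\Omega$ into three pieces: on $\set{\bar u > u_a}$ the constraint forces $\xi = 0$; on $\set{\bar\xi > 0}$ we have both $u = 0$ (by the linearized constraint) and $v = 0$ (by the description of $\KK(\bar w)$ above), so $u - v$ vanishes; and on the biactive set $\set{\bar u = u_a, \bar\xi = 0}$ the MPCC complementarity $0 \le u \perp \xi \ge 0$ yields $u\xi = 0$ while $\xi v \ge 0$ because $\xi \ge 0$ and $v \ge 0$ there. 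Every contribution is non-positive, so $u$ solves the VI characterising $T'(\bar w; w)$, and uniqueness from \cref{lem:differentiability_T} delivers $u = T'(\bar w; w)$.

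The main technical hurdle is precisely this regional decomposition, in which the MPCC structure in \eqref{eq:linearized_OC} has to match the sign pattern of the critical cone perfectly; once this is in place, the objective of \eqref{eq:linearized_OC} evaluated at any feasible $(u, w, \xi)$ equals $\dual{f'(\bar u)}{T'(\bar w; w)}_{L^2(\Omega)} + \dual{-\Delta \bar w + \zeta}{w}_{L^2(\Omega)} \ge 0$ by \cref{lem:B_stat}, and the value zero is attained at the feasible point $(0,0,0)$.
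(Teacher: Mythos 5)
Your proposal is correct and follows essentially the same route as the paper: identify feasible $u$ with $T'(\bar w;w)$ via the VI \eqref{eq:VI_direc_deriv_T} and the explicit form of $\KK(\bar w)$, identify the $w$-constraint with $\cl_{L^2(\Omega)}\TT_{\Wad}(\bar w)$ via \cref{lem:l2_closure}, and conclude by the B-stationarity inequality of \cref{lem:B_stat}. The only cosmetic difference is that the paper phrases the conditions on $\xi$ through $-\KK(\bar w)\polar$, whereas you verify the VI by the equivalent pointwise regional decomposition.
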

\begin{proof}
	Using the lower-level optimality conditions, one can see that
	$0\leq \bar u - u_a\perp \bar\xi\geq0$ holds a.e.\ in $\Omega$.
	The claim follows from \cref{lem:B_stat,lem:l2_closure}
	by using that $u = T'(\bar w; w)$ can be characterized by
	the VI \eqref{eq:VI_direc_deriv_T}
	and
	\begin{align*}
		\KK(\bar w) &=
		\set[\big]{u \in L^2(\Omega) \given
			u \ge 0 \aeon \set{\bar u = u_a, \bar \xi = 0}
			,\;
			u = 0 \aeon \set{\bar \xi > 0}
		}
		,
		\\
		-\KK(\bar w)\polar &=
		\set[\big]{\xi \in L^2(\Omega) \given
			\xi \ge 0 \aeon \set{\bar u = u_a, \bar \xi = 0}
			,\;
			\xi = 0 \aeon \set{\bar u > u_a}
		}
		.
	\end{align*}
\end{proof}

We want to consider a system for M-stationarity and other
stationarity systems for \eqref{eq:IOC}.
As we have only defined these stationarity systems for MPCCs,
we will consider the KKT reformulation of \eqref{eq:IOC},
which is obtained by replacing the lower-level by its KKT conditions.
\begin{equation*}
	\label{eq:KKTR}
	\tag{KKTR}
	\begin{minproblem}[u,\xi\in L^2(\Omega),w\in H_0^1(\Omega)]{
			F(u,w)
		}
		& w\geq w_a\quad\aeon\Omega,
		\\
		& S\adjoint(Su-y_d)+\alpha(u-w)-\xi =0,\\
		& 0 \leq u-u_a \perp \xi \geq0
		\quad\aeon\Omega.
	\end{minproblem}
\end{equation*}
This optimization problem is an instance of \eqref{eq:abstract_mpcc}.
Note that replacing the lower-level by its KKT condition
leads to an equivalent problem because the multiplier
$\xi$ depends continuously on $u$ and $w$.

Finally, we combine the linearization results with the results of \cref{sec:linear_mpcc}
to obtain M-stationarity of local minimizers of \eqref{eq:IOC}
under some assumptions.
\begin{theorem}
	\label{thm:mstat}
	Let $(\bar u, \bar w) \in L^2(\Omega) \times H_0^1(\Omega)$ 
	be a local minimizer of \eqref{eq:IOC}
	and let $\bar\xi := S\adjoint(S \bar u - y_d) + \alpha (\bar u - \bar w)$.
	We require that one of the following assumptions is satisfied.
	\begin{enumerate}
		\item
			\label{thm:mstat:nonneg}
			The operator $S\adjoint S$
			preserves non-negativity, i.e.,
			$u \ge 0$ a.e.\ implies $S\adjoint S u \ge 0$ a.e.,
			and $\bar u=u_a$ holds a.e.\ in $\Omega$.
		\item
			\label{thm:mstat:average}
			It holds $Y=\R$ and
			$Sv = \beta\dual{1}{v}_{L^2(\Omega)}$ for all $v\in L^2(\Omega)$,
			where $\beta>0$ is a constant.
	\end{enumerate}
	Then $(\bar u,\bar w,\bar\xi)$
	is an M-stationary point of \eqref{eq:KKTR}.
\end{theorem}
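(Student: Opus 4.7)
The plan is to reduce \eqref{eq:KKTR} at $(\bar u, \bar w, \bar\xi)$ to the linear problem \eqref{eq:mpcc2} at $(0,0,0)$ via the linearization from \cref{lem:linearized_OC}, apply the M-stationarity result of \cref{sec:linear_mpcc}, and then translate the multipliers back using the enhanced regularity $-\Delta\bar w \in L^2(\Omega)$ from \cref{thm:enhanced_regularity}. Concretely, by \cref{lem:linearized_OC} the point $(0,0,0)$ is a global minimizer of \eqref{eq:linearized_OC}. To match the normalization of \eqref{eq:mpcc2}, I would introduce the substitution $\tilde w := \alpha w$, so the equality constraint takes the form $A u - \tilde w - \xi = 0$ with $A := \alpha\id + S\adjoint S$, and the cost becomes $\dual{F_u}{u} + \dual{\tilde F_w}{\tilde w} + \dual{F_\xi}{\xi}$ with $F_u := f'(\bar u)$, $\tilde F_w := \alpha^{-1}(-\Delta\bar w + \zeta)$, $F_\xi := 0$. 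The biactive, strictly active and inactive index sets in \eqref{eq:mpcc2} are $\Omega^{00} := \set{\bar u = u_a,\,\bar\xi = 0}$, $\Omega^{+0} := \set{\bar u > u_a}$, $\Omega^{0+} := \set{\bar\xi > 0}$, and $\Omega_w := \set{\bar w = w_a}$.

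Next, I would verify the hypothesis of \cref{asm:operator_A} under each of the two scenarios of the theorem. In case \ref{thm:mstat:nonneg}, the operator $A = \alpha\id + S\adjoint S$ is self-adjoint, so $A\adjoint v = \alpha v + S\adjoint S v \ge 0$ whenever $v\ge 0$ a.e., and the hypothesis $\bar u = u_a$ a.e.\ gives $m(\Omega^{+0}) = 0$, which is exactly \itemref{asm:operator_A:nonneg}. In case \ref{thm:mstat:average}, a direct computation shows $S\adjoint y = \beta\,y\cdot \chi_\Omega$ for $y\in\R$, hence $A v = \alpha v + \beta^2 \dual{1}{v}_{L^2(\Omega)}$, which is precisely \itemref{asm:operator_A:average_with_scalars} with $d_1 = \alpha$, $d_2 = \beta^2$ (and $\Omega$ is bounded by assumption). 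Applying \cref{thm:linear_mstat_a} or \cref{thm:linear_mstat_b}, respectively, we obtain multipliers $(\bar p,\bar\mu,\bar\nu,\bar\lambda)\in L^2(\Omega)^4$ which satisfy \eqref{eq:lin_mstat} for the rescaled data.

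It then remains to translate these multipliers into M-stationary multipliers for \eqref{eq:KKTR} as required by \cref{def:stat}. I would set $\bar\eta := \bar p$ as the multiplier for the equality constraint, $\bar\rho := \alpha\bar\lambda$ as the multiplier for the obstacle constraint $w\ge w_a$, and retain $\bar\mu$ and $\bar\nu$ as the complementarity multipliers. Decomposing the adjoint equation in $(L^2(\Omega) \times L^2(\Omega) \times H_0^1(\Omega))\dualspace$ variable by variable, the $u$-component becomes $f'(\bar u) + A\adjoint\bar\eta + \bar\mu = 0$, the $\xi$-component becomes $-\bar\eta + \bar\nu = 0$, and the $w$-component reads $\bar w + \zeta - \alpha\bar\eta + \bar\rho = 0$ in $H^{-1}(\Omega)$; using \eqref{eq:distributional_laplacian} together with $-\Delta\bar w\in L^2(\Omega)$, this last equation can be identified with $-\Delta\bar w + \zeta - \alpha\bar\eta + \bar\rho = 0$ in $L^2(\Omega)$, which is exactly what the rescaled version of \eqref{eq:lin_mstat} delivers. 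The pointwise sign and support conditions for $\bar\rho,\bar\mu,\bar\nu$, as well as the M-stationarity disjunction on $\Omega^{00}$, carry over verbatim from \eqref{eq:lin_mstat}. The only nontrivial ingredient beyond bookkeeping is the rescaling step and the use of the regularity $-\Delta\bar w\in L^2(\Omega)$ to pass from the $H^{-1}(\Omega)$ formulation of the $w$-equation to an $L^2(\Omega)$-identity; both are painless here thanks to \cref{thm:enhanced_regularity}.
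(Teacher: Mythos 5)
Your proposal is correct and follows essentially the same route as the paper: linearize via \cref{lem:linearized_OC}, rescale $w$ by $\alpha$ so that \eqref{eq:linearized_OC} becomes an instance of \eqref{eq:mpcc2}, check that case~\ref{thm:mstat:nonneg} yields \itemref{asm:operator_A:nonneg} and case~\ref{thm:mstat:average} yields \itemref{asm:operator_A:average_with_scalars}, invoke \cref{thm:linear_mstat_a,thm:linear_mstat_b}, and identify the resulting system with the M-stationarity system of \eqref{eq:KKTR}. You merely spell out explicitly the multiplier translation ($\bar\eta=\bar p$, $\bar\rho=\alpha\bar\lambda$, use of \eqref{eq:distributional_laplacian}) that the paper summarizes as ``easy to check''.
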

\begin{proof}
	According to \cref{lem:linearized_OC}, $(0,0,0)$ is a minimizer of the 
	linearized optimization problem \eqref{eq:linearized_OC}.
	This optimization problem is an instance
	of \eqref{eq:mpcc2} (but one does need to rescale $w$ by $\alpha$
	such that $\alpha w$ in the first constraint of \eqref{eq:linearized_OC} becomes $w$).
	In case~\ref{thm:mstat:nonneg}, \itemref{asm:operator_A:nonneg} 
	is satisfied, whereas 
	in case~\ref{thm:mstat:average}, \itemref{asm:operator_A:average_with_scalars}
	is satisfied.
	Thus, according to \cref{thm:linear_mstat_a,thm:linear_mstat_b},
	$(0,0,0)$ is an M-stationary point of \eqref{eq:linearized_OC}.

	When one writes down the M-stationarity systems of \eqref{eq:KKTR} 
	at $(\bar u,\bar w,\bar\xi)$ and of \eqref{eq:linearized_OC} at $(0,0,0)$, 
	it is easy to check that they are equivalent.
	Thus, $(\bar u,\bar w,\bar\xi)$ is an M-stationary
	point of \eqref{eq:KKTR}.
\end{proof}

\subsection{Example without strong stationarity}
\label{subsec:not_strong}
Finally, we present a class of problems
for which the (global) minimizer fails to be strongly stationary.
In addition to the general setting from \cref{subsec:IOC_setting},
we specify
\begin{align*}
	y_d &= 0 \in Y,
	&
	u_a = w_a &\equiv 0,
	&
	\zeta &\equiv 1,
	&
	f(u) &= -\int_\Omega u \, \d\omega
	.
\end{align*}
Further, we require that $S\adjoint S$
preserves non-negativity, i.e.,
$u \ge 0$ implies $S\adjoint S u \ge 0$.

For
$w \in L^2(\Omega)$ with $w \ge 0$,
the solution $u = T(w)$ of \eqref{eq:OC_w}
satisfies the VI \eqref{eq:OC_w_VI},
i.e.,
\begin{equation*}
	u
	=
	\Proj_{\Uad}^{L^2(\Omega)}( w - \alpha^{-1} S\adjoint S u )
	=
	\max\parens*{ 0,  w - \alpha^{-1} S\adjoint S u }
	.
\end{equation*}
From $u \ge 0$ we infer $S\adjoint S u \ge 0$,
thus, $u \le w$.
Hence, we have can estimate the objective for every feasible pair $(u,w) = (T(u), w)$
of \eqref{eq:IOC}
via
\begin{equation*}
	F(u,w)
	=
	\int_\Omega (w - u) \, \d\omega
	+
	\frac12 \norm{w}_{H_0^1(\Omega)}^2
	\ge
	0.
\end{equation*}
Together with $0 = T(0)$ and $F(0,0) = 0$,
it follows that
$(\bar u, \bar w) = (0,0)$
is the unique global minimizer of \eqref{eq:IOC}
in this setting.
Thus, $(\bar u,\bar w,\bar\xi)=(0,0,0)$
is the unique global minimizer of the equivalent \eqref{eq:KKTR}.

We check that this global minimizer is not strongly stationary.
If one applies \cref{def:stat} to \eqref{eq:KKTR},
the resulting system of strong stationarity is
\begin{align*}
	-1 + (\alpha\id + S\adjoint S)\adjoint p + \mu &= 0,
	\\
	1 - p + \lambda &= 0,
	\\
	- p + \nu &= 0,
	\\
	\lambda &\leq 0
	\quad\aeon\Omega,
	\\
	\mu&\leq0 \quad\aeon\Omega,
	\\
	\nu&\leq0 \quad\aeon\Omega.
\end{align*}
The third equation gives $p=\nu\leq 0$ a.e.\ in $\Omega$,
which implies 
$(\alpha \id+S\adjoint S)\adjoint p\leq0$ a.e.\ in $\Omega$.
Then
\begin{equation*}
	0 = 
	-1 + (\alpha\id + S\adjoint S)\adjoint p + \mu
	\leq -1 + \mu \leq -1
	\quad\aeon\Omega
\end{equation*}
follows, which is false.
Therefore, $(\bar u,\bar w,\bar\xi)=(0,0,0)$ is not strongly stationary.

However, \itemref{thm:mstat:nonneg} can be applied,
which tells us that $(\bar u,\bar w,\bar\xi)=(0,0,0)$
is an M-stationary point.
One possibility for the multipliers to the system of M-stationarity
can be found by setting $\nu=0$,
which implies $p=\nu=0$, $\mu=1$, $\lambda=-1$.
It is natural to ask, whether multipliers to the system of M-stationarity
can also be found with $\mu=0$.
This would imply $p = (\alpha + S\adjoint S)^{-1}(1)$ and $p=1+\lambda\leq1$.
However, if $\Omega=(0,1)$,
$Sv=\alpha\dual{1}{v}_{L^2(\Omega)}$ for all $v\in L^2(\Omega)$,
and $\alpha$ is sufficiently small, we have
$p=1/(\alpha+\alpha^2)>1$.
Thus, multipliers to the system of M-stationarity with $\mu=0$
cannot always be found.
However, since \itemref{thm:mstat:nonneg} is based
on \cref{thm:schinabeck} and strong stationarity does not hold,
we know from \cref{rem:more_multipliers} 
that there must be more than one choice for the multipliers $\bar\mu,\bar\nu$
such that the system of M-stationarity can be satisfied.

\section{Conclusion and outlook}
\label{sec:conclusion}

Generalizing the approach from \cite{Harder2020} to
Lebesgue spaces proved to be no simple task.
Interestingly, the A$_\forall$-stationarity 
seems to be a bigger problem than going from there to M-stationarity.
The A$_\forall$-stationarity requires to find KKT multipliers for
\eqref{eq:lpbeta} for all measurable $\beta$,
but even though these are convex and linear optimization problems,
finding KKT multipliers is not always possible, as shown by \cref{ex:no_fcq}. 

Nonetheless, in \cref{sec:IOC}, we demonstrated that
our results can be applied for a class of inverse optimal control problems.
Thus, we were able to show that local minimizers of these problems
are M-stationarity points,
and this result is also applicable
in situations where the local minimizer is not 
strongly stationary.

To the best of our knowledge, this was the first instance
that M-stationarity could be shown for MPCC-type problems in Lebesgue spaces.
It would be interesting to know, if these methods could be improved
to show M-stationarity for other MPCCs in Lebesgue spaces.
Furthermore, some ideas may be used to improve stationarity
conditions for MPCCs in the Sobolev space $H_0^1(\Omega)$, e.g., for the optimal control
of the obstacle problem.

\printbibliography
\end{document}